\documentclass[a4paper,12pt,notitlepage]{amsart}
\usepackage{amsfonts,amsmath,amscd,amsthm,amssymb,graphicx}
\usepackage{booktabs,mathtools}
\usepackage{tikz,url}
\newtheorem{theorem}{Theorem}[section]
\newtheorem*{theorem*}{Theorem}
\newtheorem{proposition}[theorem]{Proposition}
\newtheorem{lemma}[theorem]{Lemma}
\newtheorem{corollary}[theorem]{Corollary}
\newtheorem{example}[theorem]{Example}
\newtheorem{remark}[theorem]{Remark}
\theoremstyle{definition}
\newtheorem{definition}[theorem]{Definition}

\newtheorem*{acknowledgement*}{Acknowledgements}

\DeclareMathOperator{\Ind}{Ind}
\DeclareMathOperator{\supp}{supp}
\newcommand{\ME}{\mathcal{ME}}
\newcommand{\cF}{\mathcal{F}}
\newcommand{\cA}{\mathcal{A}}
\makeatletter
\newcommand*{\rom}[1]{\expandafter\@slowromancap\romannumeral #1@}
\makeatother

\begin{document}

\title{On the Morse Ensemble Polynomial of Simplicial Complexes}

\author{Chong Zheng}

\address{
Department of Architecture,
Shibaura Institute of Technology,
Tokyo, Japan
}

\address{
Research Institute for Science and Engineering,
Waseda University,
Tokyo, Japan
}

\email{\url{c_zheng@aoni.waseda.jp}}

\subjclass[2020]{05E45, 57Q70, 05C50, 05C31, 06A07}
\keywords{discrete Morse theory, acyclic matching, Morse vector,
generating function, graph Laplacian, Matrix--Tree theorem, Tutte
polynomial, independence complex, Morse complex}

\begin{abstract}
We introduce the \emph{Morse ensemble polynomial}
$\ME_K(z_0,\ldots,z_d)$ of a finite simplicial complex $K$,
defined as the generating function
$\ME_K = \sum_M \prod_i z_i^{c_i(M)}$
over all acyclic matchings $M$ on the face poset of $K$, where
$c_i(M)$ counts critical $i$-simplices.
This polynomial records the complete critical-vector distribution over all
acyclic matchings, equivalently over all discrete gradient vector fields arising
from discrete Morse functions on $K$, and is an isomorphism invariant of
simplicial complexes.

In dimension one, this invariant recovers the Chari--Joswig graph formula
for the $f$-vector of the discrete Morse complex in a two-variable
Morse-vector normalization:
$\ME_G=z_1^{m-n}\det(z_0z_1\,I_n+L_G)$ for a connected graph $G$.
The main new contributions are higher-dimensional and structural.
First, we prove a Top-Face Recursion for adding a top-dimensional simplex,
with a non-liftable correction term $F(K,\sigma,\tau)$.
The vanishing and leading obstruction of this correction term are controlled
by the top incidence graph: an incidence-separation criterion detects exactly
when $F=0$, a leading obstruction layer is governed by shortest obstruction
paths, and a tree-like (incidence-forest) regime of the top incidence structure
gives a correction-free higher-dimensional recursion, including stacked balls
as a concrete class.  
Second, we introduce the independence ME polynomial
$\Phi(G):=\ME_{\mathrm{Ind}(G)}$, a graph invariant which strictly refines
the graph-level Morse ensemble, separates examples not distinguished by
$T_G$ and $I(G;t)$, and records collapse-level information of
$\mathrm{Ind}(G)$ through coefficients such as $[z_0]\Phi(G)$.
\end{abstract}

\maketitle

\section{Introduction}
\label{sec:intro}

\textit{Discrete Morse theory}, introduced by Forman~\cite{Forman1998},
provides a combinatorial framework for simplifying cell complexes while
preserving their homotopy type.  In this framework, a discrete \textit{gradient
vector field} on a finite cell complex can be described as an \textit{acyclic
matching} on its face poset, and the unmatched cells play the role of
\textit{critical} cells.  Much of the classical theory, as well as many algorithmic
applications, focuses on finding matchings with as few critical cells as
possible.  This optimisation problem is difficult in general: finding an
optimal discrete Morse matching is NP-hard~\cite{JoswigPfetsch2006,
BauerRathod2019}.

Beyond the search for a single optimal matching, the collection of all
acyclic matchings on a fixed complex has also been studied from several
viewpoints.  For example, the Chari--Joswig complex~\cite{ChariJoswig2005}
organises acyclic matchings into a simplicial complex, and our previous works
have investigated \textit{birth--death phenomena} among discrete Morse
functions~\cite{Zheng2024a,Zheng2024b,Zheng2025}.  These approaches emphasise the
structure of the space of discrete Morse functions, as well as the ways in
which such functions are connected or transformed into one another.

The aim of the present paper is different and complementary.  Our
contribution is not the mere consideration of all acyclic matchings, but
rather the introduction and study of a compact enumerative invariant that
records their critical-vector distribution.  More precisely, for a finite
simplicial complex $K$, we associate to the set of all acyclic matchings on
the face poset $P(K)$ a generating polynomial whose monomials encode the
numbers of critical simplices in each dimension.  This polynomial packages
the ensemble of discrete Morse matchings into a single algebraic object,
which we study as a combinatorial invariant of $K$.

Concretely, let $K$ be a finite simplicial complex of dimension $d$.  Each
acyclic matching $M$ on the face poset $P(K)$ determines a \emph{Morse
vector}
$
  c(M)=(c_0(M),\ldots,c_d(M)),
$
where $c_i(M)$ denotes the number of unmatched, or critical,
$i$-simplices. 
\begin{definition}
\label{def:ME}
The \emph{Morse ensemble polynomial} of $K$ is
\[
  \ME_K(z_0,\ldots,z_d)
  =\sum_{M\in\cA(K)}\prod_{i=0}^d z_i^{c_i(M)},
\]
where $\cA(K)$ denotes the set of all acyclic matchings on $P(K)$.
\end{definition}

The polynomial $\ME_K$ is finite, has non-negative integer coefficients,
and is an isomorphism invariant of simplicial complexes.  It refines the usual
optimisation problem in discrete Morse theory by recording, with multiplicity,
the full critical-vector distribution over all acyclic matchings.

The graph case anchors the invariant and supplies the spectral dictionary.  For
a connected graph $G$ with $n$ vertices, $m$ edges, and Laplacian $L_G$, the
Morse ensemble polynomial admits the closed Laplacian form
\[
  \ME_G(z_0,z_1)
  = z_1^{m-n}\det(z_0z_1 I_n+L_G)
  = z_1^{m-n}\prod_{i=1}^n (z_0z_1+\lambda_i),
\]
where $0=\lambda_1\leq\cdots\leq\lambda_n$ are the Laplacian eigenvalues.
Since graph Morse vectors satisfy $c_1=c_0+m-n$, the two variables collapse to
the single spectral parameter $q=z_0z_1$, so that $\ME_G$ is controlled entirely
by the Laplacian spectrum.  This identity is the Morse-vector renormalization of the Chari--Joswig graph
enumeration; Remarks~\ref{rem:CT-graph} and~\ref{cor:morse-complex-graph}
make the relation and attribution precise.  It gives a dictionary between
matchings and spectra: $\ME_G$ is a complete Laplacian-spectral invariant
(Theorem~\ref{cor:cospectral}), its coefficients are both spectral and
rooted-forest counts (Corollary~\ref{cor:sym-poly}), and its perfect-matching
coefficient recovers the Kirchhoff Matrix--Tree count
(Theorem~\ref{thm:perfect}).  This one-parameter spectral collapse is special
to dimension one; in dimension at least two the critical-vector distribution is
genuinely multivariate, and new phenomena appear.

The main new contributions are the following.

\medskip
\noindent\textbf{(I) The Top-Face Recursion and its correction term}
(Theorem~\ref{thm:top-face}).
Let $\sigma$ be a $d$-simplex whose boundary is contained in $K$, and set
$K'=K\cup\{\sigma\}$. 
Then,
\[
  \ME_{K\cup\{\sigma\}}
  \;=\; z_d\cdot\ME_K
  \;+\; \sum_{\tau\prec\sigma} L_{\sigma,\tau},
\]
where $L_{\sigma,\tau}$ counts acyclic matchings
on $P(K')\setminus\{\sigma,\tau\}$ whose lift by the pair
$(\tau,\sigma)$ remains acyclic.
Equivalently,
\[
  L_{\sigma,\tau}
  =\ME_{P(K')\setminus\{\sigma,\tau\}}-F(K,\sigma,\tau).
\]
Although a closed form for $F$ is open in general
(Problem~\ref{prob:nonliftable-matchings}), its structure is accessible:
Lemma~\ref{thm:incidence-separation} gives an exact vanishing criterion and
Proposition~\ref{prop:leading-obstruction} identifies its leading layer.  The
incidence-forest regime yields a correction-free recursion, in particular for
stacked balls (Theorem~\ref{prop:leaf-order},
Corollary~\ref{cor:stacked-balls}), while pseudomanifold attachments admit a
deterministic dual-graph liftability test
(Proposition~\ref{prop:forest-routing}).  The graph bridge recursion is the
one-dimensional shadow of this higher-dimensional mechanism; the analogy with
Tutte deletion-contraction is explained in Remark~\ref{rem:V-paths}.

\medskip
\noindent\textbf{(II) The independence ME polynomial as a fine graph invariant}
(Theorems~\ref{thm:main}, \ref{thm:phi-cospectral},
\ref{thm:phi-determines-me}).
The \textit{independence ME polynomial}
$\Phi(G):=\ME_{\Ind(G)}$ fits into a strict hierarchy with the
graph-level Morse ensemble $\ME_G$: the invariant $\Phi(G)$ determines
$\ME_G$, and the determination is strict. More precisely:
\begin{theorem*}[Hierarchy; Theorems~\ref{thm:main},
\ref{thm:phi-cospectral}, \ref{thm:phi-determines-me}]
For finite graphs:
\begin{enumerate}
\item[\normalfont(i)] there exist non-isomorphic graphs with
  $T_{G_1}=T_{G_2}$ and $I(G_1;t)=I(G_2;t)$ but
  $\Phi(G_1)\neq\Phi(G_2)$, even when
  $\Ind(G_1)\simeq\Ind(G_2)$;
\item[\normalfont(ii)] there exist Laplacian-cospectral graphs
  (so $\ME_{G_1}=\ME_{G_2}$) with
  $\Phi(G_1)\neq\Phi(G_2)$;
\item[\normalfont(iii)] conversely, $\Phi(G)$ determines $\ME_G$;
  hence $\Phi$ is a strict refinement of the graph-level Morse
  ensemble invariant;
\end{enumerate}
\end{theorem*}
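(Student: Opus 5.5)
The three parts are proved separately. Parts (i) and (ii) are existence statements and are settled by explicit small examples. Part (iii) is a structural recovery argument.

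\emph{Part (iii).} Write $\Phi(G)=\ME_{\Ind(G)}$. Then $\Phi(G)$ determines $\ME_G$ by reading off numerical data of $G$ and applying the Laplacian closed form for $\ME_G$.
\begin{enumerate}
\item[\normalfont(a)] Specialise all $z_i=1$ except one variable. Alternatively, take the coefficient of the monomial with every simplex critical, namely the empty matching, which is $\prod_i z_i^{f_i}$. This recovers the $f$-vector of $\Ind(G)$.
\item[\normalfont(b)] The vertices of $\Ind(G)$ are the vertices of $G$, and its missing edges are exactly the edges of $G$. So $f_0=n$ and $f_1=\binom n2-m$, which gives $n$ and $m$.
\item[\normalfont(c)] This does not yet give the Laplacian spectrum. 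Here I would try to show that the matchings of $\Ind(G)$ using only vertex--edge pairs are determined by $\Phi$, for instance through a suitable truncation or degree filtration of $\Phi$. Those matchings are the acyclic matchings of the $1$-skeleton, which is the complement graph $\bar G$. Their generating function is $\ME_{\bar G}$, or a product over components if $\bar G$ is disconnected.
\item[\normalfont(d)] The Laplacian spectrum of $\bar G$ determines that of $G$ via $\lambda\mapsto n-\lambda$ on the nonzero part. This uses Theorem~\ref{cor:cospectral}, which says $\ME$ is a complete Laplacian-spectral invariant.
\end{enumerate}

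The obstacle is step (c): isolating the $1$-skeleton contribution from $\Phi$. Higher-dimensional pairs also change $c_0$ and $c_1$, so the contributions are not a priori separable. My first attempt would be to extract the part of $\Phi$ whose monomials have $c_i=f_i$ for all $i\ge 2$. These are exactly the matchings using only $(0,1)$ pairs, since any pair $(\tau,\sigma)$ with $\dim\sigma\ge2$ lowers some $c_i$ with $i\ge2$. Moreover, $(1,2)$ pairs lower $c_2$. So the coefficient of $z_2^{f_2}\cdots z_d^{f_d}$, as a polynomial in $z_0,z_1$, is precisely $\ME$ of the $1$-skeleton. That resolves step (c).

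\emph{Part (ii).} I would take a known pair of Laplacian-cospectral non-isomorphic graphs. They have equal $\ME_G$ by the Laplacian formula. Next I would check that their independence complexes differ in some coarse statistic, such as the $f$-vector, which is read off from $\Phi$ as in step (a). The simplest choice is a pair with different numbers of triangles in the complement. If the $f$-vectors happen to agree, I would instead compare $[z_0]\Phi$ or the number of perfect acyclic matchings, computed by hand or by machine on small examples.

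\emph{Part (i).} I would find graphs with equal Tutte polynomial and equal independence polynomial whose independence complexes are homotopy equivalent but have different $\Phi$. Natural candidates are two non-isomorphic trees of the same size. For trees $T_G=x^{n-1}$, so any two trees with $n$ vertices share $T_G$. I would search small trees, say $8$–$10$ vertices, for pairs with equal $I(G;t)$. The independence complexes of forests are contractible or homotopy equivalent to spheres, which makes $\Ind(G_1)\simeq\Ind(G_2)$ achievable. I would then distinguish $\Phi$ through the $1$-skeleton part from step (c), that is, through $\ME_{\bar G_1}\ne\ME_{\bar G_2}$. The complements have different Laplacian spectra exactly when $G_1,G_2$ are not Laplacian-cospectral, and non-cospectral trees are generic. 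Locating such a pair requires a finite search; that search is the main practical step of the proof.
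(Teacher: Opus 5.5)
\textbf{Part (iii).} This is exactly the paper's proof of Theorem~\ref{thm:phi-determines-me}. The coefficient of $z_2^{f_2}\cdots z_d^{f_d}$ in $\Phi(G)$ is $\ME_{\overline G}$, and $\lambda\mapsto n-\lambda$ transfers the spectrum. One small fix: $G$ or $\overline G$ may be disconnected, so rely on the identity $\ME_H=z_1^{m-n}\det(z_0z_1I+L_H)$, which holds for every graph by multiplicativity. Do not cite Theorem~\ref{cor:cospectral}, which is stated only for connected graphs.

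\textbf{The gap: (i) and (ii) have no witnesses.} Both are existence statements, and you exhibit no example for either. For (ii) this is cosmetic. For (i), the tree search is a hope rather than an argument, because you would need all of the following, and the proposal establishes none of them:
\begin{enumerate}
\item[\textup{(a)}] non-isomorphic trees with equal independence polynomials; there are none on at most six vertices.
\item[\textup{(b)}] homotopy-equivalent independence complexes. Equal $I(G;t)$ gives equal $\tilde\chi(\Ind(G))=-I(G;-1)$. That separates contractible from spherical, but does not by itself fix the sphere's dimension.
\item[\textup{(c)}] different Laplacian spectra.
\end{enumerate}

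\textbf{How to close it.} Your own tools settle both parts using the paper's explicit pairs, and more cheaply than the paper does.

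For (ii), take the cospectral pair of Example~\ref{ex:cospectral}. The complements contain $2$ and $3$ triangles respectively, so $I(G_1;t)=1+6t+8t^2+2t^3\neq 1+6t+8t^2+3t^3=I(G_2;t)$. Hence the empty-matching monomials $z_0^6z_1^8z_2^2$ and $z_0^6z_1^8z_2^3$ already differ. The paper instead compares the machine counts $|\cA(\Ind(G_1))|=6212\neq 15464=|\cA(\Ind(G_2))|$.

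For (i), take the pair of Theorem~\ref{thm:main}, with edge sets $\{01,02,03,05,15,34\}$ and $\{01,02,03,12,14,25\}$.
\begin{enumerate}
\item[\textup{(a)}] Both graphs are unicyclic with a triangle, so $T=x^3(x^2+x+y)$.
\item[\textup{(b)}] Both have $I=1+6t+9t^2+4t^3$.
\item[\textup{(c)}] Each independence complex collapses by hand onto a tree, using four elementary collapses through free edges. So both are contractible.
\item[\textup{(d)}] The trace identity $\operatorname{tr}L^3=\sum_v d_v^3+3\sum_v d_v^2-6\cdot\#\{\text{triangles}\}$ gives $174$ for the first graph and $168$ for the second. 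So the spectra differ, and your part (iii) yields $\Phi(G_1)\neq\Phi(G_2)$.
\end{enumerate}
The paper certifies this pair instead through the enumerated collapse counts $[z_0]\Phi=270\neq 324$. That shows separation already at the collapse level, but it cannot be checked by hand.
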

The coefficient $[z_0]\Phi(G)$ gives an additional collapse-level
interpretation: it counts collapsing matchings of $\Ind(G)$, and is positive
precisely when $\Ind(G)$ is collapsible.

\medskip
The paper is organised as follows.
Section~\ref{sec:forest} introduces $\ME_K$, relates it to the
Chari--Joswig complex, develops the Forest Expansion and Compositional Lemma,
and derives exact formulas for paths and cycles.
Section~\ref{sec:structural} establishes the Bridge Recursion.
Section~\ref{sec:laplacian} records the Chari--Joswig graph formula in Morse-ensemble form and develops its spectral consequences.
Section~\ref{sec:separation} establishes the Top-Face
Recursion, the incidence-separation criterion controlling the correction
$F$, and the leading obstruction layer; it also records the Betti
coefficient that counts perfect matchings.
Section~\ref{sec:independence} proves the hierarchy theorem for the independence ME polynomial.
Section~\ref{sec:rigidity} collects open problems.

\section{The Forest Expansion}
\label{sec:forest}
We recall the basic notation for acyclic matchings and discrete Morse theory. 
The \emph{face poset} $P(K)$ of a finite simplicial complex $K$ is the
partially ordered set of all simplices of $K$, ordered by inclusion.
Its covering relations are precisely the pairs $\sigma\prec\tau$ with
$\sigma\subset\tau$ and $\dim\tau=\dim\sigma+1$.

An \emph{acyclic matching} on $P(K)$ is a collection $M$ of covering pairs
$(\sigma,\tau)$ such that each simplex of $K$ appears in at most one pair
and such that the Hasse diagram, after reversing the matched edges,
contains no directed cycle ~\cite{Forman1998}.  Equivalently, $M$ admits no
\emph{closed gradient path}: a sequence
$\sigma_0,\tau_0,\sigma_1,\tau_1,\ldots,\sigma_r=\sigma_0$ ($r\ge 1$) with
$(\sigma_i,\tau_i)\in M$, $\sigma_{i+1}\prec\tau_i$, and
$\sigma_{i+1}\neq\sigma_i$; such a closed gradient path is precisely a directed
cycle in the reversed Hasse diagram, and we use the two descriptions
interchangeably.  The simplices not appearing in any pair of
$M$ are called \emph{critical}.  We write $c_i(M)$ for the number of
critical $i$-simplices.  Thus every acyclic matching determines a
\emph{Morse vector}
\[
  c(M)=(c_0(M),\ldots,c_d(M)).
\]
By Forman's theorem~\cite{Forman1998}, if $K$ has dimension $d$, then
$K$ is homotopy equivalent to a CW complex with exactly $c_i(M)$ cells
of dimension $i$.  In particular, the Morse vector satisfies the \textit{Morse
inequalities}
$
  c_i(M)\geq \beta_i(K).
$
A matching for which $c_i(M)=\beta_i(K)$ for all $i$ is called
\emph{perfect}; such matchings need not exist.

The set of all acyclic matchings on $P(K)$ will be denoted by $\cA(K)$.
As defined in the introduction, the Morse ensemble polynomial is
\[
  \ME_K(z_0,\ldots,z_d)
  =
  \sum_{M\in\cA(K)}
  \prod_{i=0}^d z_i^{c_i(M)},
\]
where $\cA(K)$ is the set of all acyclic matchings on $P(K)$.  The
polynomial $\ME_K$ records the Morse vectors of all acyclic matchings,
with multiplicities.
The set $\cA(K)$ also has a natural simplicial-complex structure.

\begin{proposition}[Collapsibility coefficient]
\label{prop:collapsibility-coefficient}
Let $K$ be a finite connected simplicial complex.  The coefficient
$[z_0]\ME_K$ counts acyclic matchings with exactly one critical vertex and no
other critical simplices.  In particular,
\[
  [z_0]\ME_K>0
  \quad\Longleftrightarrow\quad
  K \text{ is collapsible}.
\]
\end{proposition}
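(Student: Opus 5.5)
The first assertion is immediate from Definition~\ref{def:ME}. The monomial $z_0$ is $\prod_i z_i^{c_i(M)}$ with $c_0(M)=1$ and $c_i(M)=0$ for all $i\ge 1$. Hence $[z_0]\ME_K$ is the number of $M\in\cA(K)$ with exactly one critical simplex, and that simplex is a vertex. Since $\ME_K$ has non-negative integer coefficients, $[z_0]\ME_K>0$ holds exactly when at least one such matching exists. The equivalence therefore reduces to the classical fact that a finite simplicial complex $K$ is collapsible if and only if it admits an acyclic matching whose only critical cell is a single vertex. I will prove both directions of this fact.

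($\Leftarrow$) Suppose $K$ collapses to a vertex $v$ through a sequence of elementary collapses. Each collapse removes a free pair $(\sigma_j,\tau_j)$ with $\sigma_j\prec\tau_j$. Let $M$ be the set of all these pairs. It is a matching, because each simplex is removed exactly once. Every simplex other than $v$ is removed, so $v$ is the unique critical simplex. For acyclicity, index each pair by the step $j$ at which it is removed. Suppose a gradient path goes from $(\sigma_j,\tau_j)$ to $(\sigma_k,\tau_k)$, so $\sigma_k\prec\tau_j$ and $\sigma_k\ne\sigma_j$. At step $j$ the face $\sigma_j$ was free, so $\tau_j$ was a maximal face; hence $\sigma_k$, as a face of $\tau_j$, was still present at step $j$. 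But $\sigma_k$ is removed at step $k$, and it is not removed at step $j$ because $\sigma_k\ne\sigma_j$ and $\sigma_k$ is not the top cell $\tau_j$. So $k>j$. Indices strictly increase along gradient paths, so no closed gradient path exists.

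($\Rightarrow$) Given an acyclic $M$ with sole critical cell $v$, I will use the standard reverse argument, as in Forman's collapse theorem~\cite{Forman1998}. Reversing the matched edges of the Hasse diagram gives an acyclic directed graph, so it has a linear extension. Choose a maximal simplex $\tau$ of $K$ with $\tau\neq v$. Since $\tau$ is not critical it is matched, and since nothing lies above it, it is matched downward to a facet $\sigma$. I claim $\sigma$ is free, meaning $\tau$ is its only coface. If there were another coface $\tau'\succ\sigma$, then $\tau'$ is not critical; this uses that $v$ is a vertex, so every simplex of positive dimension is matched. This gives a gradient step, and maximality of $\tau$ alone does not exclude it.

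The main obstacle is to choose $\tau$ well, rather than taking an arbitrary maximal simplex. I will select the pair $(\sigma,\tau)$ that is extremal in the linear extension: a matched pair whose $\sigma$ admits no gradient path out of it to another pair. Acyclicity guarantees that such a sink exists. Once there are no other cofaces, $\sigma$ is free and we perform the elementary collapse. The restriction of $M$ to $K\setminus\{\sigma,\tau\}$ is still acyclic and still has $v$ as its only critical cell. Induction on the number of simplices then collapses $K$ to $v$. Alternatively, I will simply invoke Forman's theorem in its collapse form: a complex with an acyclic matching whose critical cells lie in a subcomplex $L$ collapses onto $L$, and here $L=\{v\}$. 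I will take this citation as the cleanest route. Finally, connectedness of $K$ ensures that collapsibility is the meaningful notion here, and it is consistent with $c_0=\beta_0=1$.
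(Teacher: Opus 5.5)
Your proof is correct and takes the same route as the paper: you read $[z_0]\ME_K$ off the definition and reduce the equivalence to Forman's collapsibility criterion, which the paper simply cites, while you also verify the collapse-to-matching direction by hand. One caution: the general form you quote (all critical cells lie in a subcomplex $L$, hence $K$ collapses onto $L$) is false without the extra hypothesis that no matched pair meets $L$ (take an edge $ab$ with $M=\{(b,ab)\}$ and $L=\{a,b\}$), but that hypothesis holds automatically for $L=\{v\}$ because $v$ is critical, so your application is valid.
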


\begin{proof}
This is Forman's collapsibility criterion~\cite{Forman1998}: a discrete Morse
matching with one critical $0$-simplex and no other critical simplices is
equivalent to a collapse to a point.  The coefficient $[z_0]\ME_K$ counts
exactly such matchings.
\end{proof}

\subsection{Relation with the Chari--Joswig complex}
\label{sec:chari-joswig}

Following Kozlov~\cite{Kozlov1999}, Chari and Joswig~\cite{ChariJoswig2005}
showed that acyclic matchings on $P(K)$ form the simplices of the
\emph{Chari--Joswig complex} $\mathfrak{M}(K)$.  Equivalently, $\ME_K$ is a
critical-vector refinement of the face enumerator of
$\mathfrak{M}(K)$.  We use the convention
\[
  f_X(q)=\sum_{\emptyset\neq F\in X} q^{|F|-1}
\]
for the ordinary face enumerator of a simplicial complex $X$.  If all variables
are specialised to a single variable $t$, then
\[
  \ME_K(t,\ldots,t)=t^{|K|}+t^{|K|-2}f_{\mathfrak M(K)}(t^{-2}),
\]
so $\ME_K$ recovers the $f$-vector of $\mathfrak M(K)$ after specialisation,
while distinguishing matchings of the same cardinality with different critical
vectors.  Equivalently, in terms of the multigraded Stanley--Reisner face
enumerator $F_{\mathfrak M(K)}$ whose vertex $(\sigma^i,\tau^{i+1})$ has
multidegree $e_i+e_{i+1}$,
\begin{equation}
\label{eq:SR-bridge}
  \ME_K(z_0,\ldots,z_d) =
  \left(\prod_{i=0}^d z_i^{f_i(K)}\right)
  F_{\mathfrak M(K)}\bigl(u_{(\sigma^i,\tau^{i+1})}=(z_i z_{i+1})^{-1}\bigr).
\end{equation}
In the collapsible case, $[z_0]\ME_K$ counts the facets of
$\mathfrak M(K)$ corresponding to matchings with a single critical vertex;
more generally, the Betti coefficient considered in
Subsection~\ref{subsec:perfect-coefficients} counts the faces of
$\mathfrak M(K)$ whose critical vector is the Betti vector.  For graphs,
Remark~\ref{cor:morse-complex-graph} recalls the specialised face enumerator
explicitly in terms of the nonzero Laplacian eigenvalues.

We next record a basic multiplicativity property.

\begin{proposition}[Multiplicativity]
\label{prop:product}
For disjoint simplicial complexes $K_1$ and $K_2$,
\[
  \ME_{K_1\sqcup K_2}=\ME_{K_1}\cdot\ME_{K_2}.
\]
\end{proposition}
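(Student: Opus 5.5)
The plan is to build an explicit bijection $\cA(K_1\sqcup K_2)\to\cA(K_1)\times\cA(K_2)$ that preserves Morse vectors additively, and then expand the product of the two generating functions.

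First I will record the structure of the face poset. Since $K_1$ and $K_2$ are disjoint, $P(K_1\sqcup K_2)$ is the disjoint union of $P(K_1)$ and $P(K_2)$. No simplex of $K_1$ is comparable with a simplex of $K_2$, so every covering pair $(\sigma,\tau)$ has both entries in the same $P(K_j)$. Hence any matching $M$ on $P(K_1\sqcup K_2)$ splits uniquely as $M=M_1\sqcup M_2$, where $M_j$ is a matching on $P(K_j)$. Conversely, any pair $(M_1,M_2)$ of matchings gives a matching $M_1\sqcup M_2$; it is a matching because the two ground sets are disjoint.

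Next I check that acyclicity respects this splitting. The Hasse diagram of $P(K_1\sqcup K_2)$, with the edges of $M$ reversed, is the disjoint union of the two modified Hasse diagrams for $(P(K_1),M_1)$ and $(P(K_2),M_2)$, with no edges between them. A directed cycle is connected, so it lies entirely in one component. Therefore $M$ is acyclic if and only if both $M_1$ and $M_2$ are acyclic. This gives the bijection $\cA(K_1\sqcup K_2)\cong\cA(K_1)\times\cA(K_2)$.

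Finally, the critical simplices of $M$ are exactly the disjoint union of the critical simplices of $M_1$ and of $M_2$. So $c_i(M)=c_i(M_1)+c_i(M_2)$ for each $i$, where $c_i(M_j)=0$ for $i>\dim K_j$. Thus the monomial of $M$ is the product of the monomials of $M_1$ and $M_2$. Summing over the bijection factors the sum as $\ME_{K_1}\cdot\ME_{K_2}$, with both polynomials regarded in the variables $z_0,\ldots,z_d$ for $d=\max(\dim K_1,\dim K_2)$.

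No step is a real obstacle. The only point needing care is the acyclicity step, that is, noting that a directed cycle cannot pass between components because no covering relations join $P(K_1)$ and $P(K_2)$.
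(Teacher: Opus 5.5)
Your proposal is correct and follows essentially the same route as the paper. Both arguments use the unique splitting $M=M_1\sqcup M_2$ into acyclic matchings on $P(K_1)$ and $P(K_2)$, with $c_i(M)=c_i(M_1)+c_i(M_2)$; your observation that a directed cycle in the modified Hasse diagram cannot cross between the two components spells out the acyclicity step that the paper leaves implicit.
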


\begin{proof}
An acyclic matching on $P(K_1\sqcup K_2)$ is uniquely a pair of acyclic
matchings $(M_1,M_2)$ on $P(K_1)$ and $P(K_2)$, respectively.  Moreover,
the critical-cell counts add:
\[
  c_i(M_1\sqcup M_2)=c_i(M_1)+c_i(M_2).
\]
The claimed identity follows immediately from the definition of
$\ME_K$.
\end{proof}

We next specialize to graphs.  Let $G=(V,E)$ be a connected graph with
$|V|=n$ and $|E|=m$, viewed as a one-dimensional simplicial complex.
Then $P(G)$ has elements $V\cup E$, ordered by $v<e$ whenever the
vertex $v$ is incident to the edge $e$.  Hence an acyclic matching on
$P(G)$ is a collection of vertex--edge pairs $(v,e)$, with $v\in e$,
satisfying the matching and acyclicity conditions above. 

For graphs, the Chari--Joswig complex admits a concrete interpretation.
It coincides with the complex of rooted spanning forests.  Equivalently,
an acyclic matching on $P(G)$ can be described by choosing a forest
support $F\subseteq G$ together with a root in each connected component
of $F$.  The bijection underlying this description is implicit in
Kozlov~\cite{Kozlov1999} and Chari--Joswig~\cite{ChariJoswig2005}.
For later use in the proof of the Laplacian formula, we state it below
as an explicit counting formula.

The following is a fundamental fact in discrete Morse theory.
\begin{lemma}[Forest Support~{\cite{Chari2000}}]
\label{lem:forest-support}
The edges appearing in any acyclic matching on $P(G)$ form a
forest in $G$.
\end{lemma}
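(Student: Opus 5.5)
We argue by contradiction: suppose $M$ is an acyclic matching on $P(G)$ whose matched edges contain a cycle $C = v_0 e_0 v_1 e_1 \cdots v_{k-1} e_{k-1} v_0$ in $G$, where $k\ge 3$, or $k=2$ if $G$ has multiple edges. We will produce a closed gradient path, which contradicts acyclicity.

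The key step is a counting argument on $C$. Each edge $e_j$ of $C$ is matched by $M$ to one of its two endpoints, and both endpoints lie on $C$. The matching condition says that distinct edges are matched to distinct vertices. So the map $e_j \mapsto$ (its partner vertex) is an injection from the $k$ edges of $C$ into the $k$ vertices of $C$, hence a bijection. Fix $j$, and suppose $e_j=\{v_j,v_{j+1}\}$ is matched to $v_{j+1}$. Then $e_{j+1}$ cannot also take $v_{j+1}$, so it must be matched to $v_{j+2}$. Propagating around the cycle, all edges are oriented consistently: after reindexing (and reversing $C$ if needed), $(v_{j+1},e_j)\in M$ for every $j$, with indices mod $k$.

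Next we write down the gradient path
\[
  v_1, e_0, v_0, e_{k-1}, v_{k-1}, e_{k-2}, \ldots, v_2, e_1, v_1 .
\]
Each step $v_{j+1}\to e_j$ uses a matched pair $(v_{j+1},e_j)\in M$. Each step $e_j \to v_j$ passes to the other face of $e_j$, and $v_j\neq v_{j+1}$. So the conditions in the definition of a closed gradient path are satisfied: $(\sigma_i,\tau_i)\in M$, $\sigma_{i+1}\prec\tau_i$, and $\sigma_{i+1}\neq\sigma_i$. This is a closed gradient path with $r=k\ge 1$, which contradicts acyclicity of $M$. Hence the matched edges contain no cycle and form a forest.

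The only step needing care is the bijection and propagation argument, which is the main point. Two details must be checked there:
\begin{itemize}
\item the cycle has distinct vertices, so the injection really lands in a set of size exactly $k$;
\item for a $2$-cycle formed by parallel edges in a multigraph, the same argument still gives a length-$2$ closed path.
\end{itemize}
If $G$ is a simple graph, as in the simplicial setting, cycles have $k\ge 3$ and this second case does not occur.
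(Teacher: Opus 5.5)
Your proof is correct. The paper gives no proof of this lemma: it quotes it from Chari as a fundamental fact. So your proposal supplies a self-contained justification (the standard one) rather than following a route in the text.

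Your mechanism works as follows. No vertex can absorb two matched edges, so the edges of a matched cycle are forced to point consistently around it. The consistent orientation then reads off the closed gradient path $v_1,e_0,v_0,e_{k-1},\ldots,v_2,e_1,v_1$. This is exactly the ``every vertex receives at most one incoming edge'' picture that the paper later uses to define a valid orientation in Lemma~\ref{lem:composition}, so your argument also explains why that language is the right one.

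Two small comments. First, the injection/bijection count is not needed. Propagating from a single edge uses only the fact that no vertex lies in two matched pairs; the bijection is a consequence of this, not an input.

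Second, a self-contained treatment buys something the citation leaves implicit. The counting in Proposition~\ref{prop:forest-sum} and Lemma~\ref{lem:composition} also needs the converse: every valid orientation of a forest gives an acyclic matching. Your picture gives this at once. A closed gradient path $v_0,e_0,v_1,e_1,\ldots$ traces a closed walk in the matched edges in which consecutive edges are distinct, because each edge has only one partner vertex. Such a non-backtracking closed walk cannot exist in a forest.
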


For a forest $F\subseteq E$, let $a(F)$ denote the number of
acyclic matchings whose matched edges are exactly $F$.
Since the unmatched vertices number $n-|F|$ and unmatched edges
number $m-|F|$, summing over all possible forests gives the following equality.

\begin{proposition}
\label{prop:forest-sum}
$$\ME_G(z_0,z_1)
 =\sum_{F\in\cF(G)} a(F)\,z_0^{n-|F|}\,z_1^{m-|F|},$$
where $\cF(G)$ denotes the set of all spanning forests of $G$.
\end{proposition}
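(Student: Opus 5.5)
The plan is to partition $\cA(G)$ according to the set of edges that are matched, and then to sum over the blocks of the partition. For each acyclic matching $M$ on $P(G)$, let $E(M)\subseteq E$ be the set of edges occurring in some pair $(v,e)\in M$. Lemma~\ref{lem:forest-support} tells us that $E(M)$ is a forest, so $E(M)\in\cF(G)$. Here we read $\cF(G)$ as the set of all edge subsets of $G$ that are acyclic (spanning forests in the sense that all of $V$ is kept). The map $M\mapsto E(M)$ is therefore well defined from $\cA(G)$ to $\cF(G)$. This gives
\[
\cA(G)=\bigsqcup_{F\in\cF(G)}\{M\in\cA(G): E(M)=F\},
\]
and the block indexed by $F$ has size $a(F)$ by definition. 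Some blocks may be empty, but then $a(F)=0$ and the term vanishes, so there is no need to first decide which forests actually occur.

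Next I compute the monomial attached to each $M$ with $E(M)=F$. Each pair of $M$ uses exactly one edge, and distinct pairs use distinct edges, so $|M|=|F|$. Each pair also uses exactly one vertex, and distinct pairs use distinct vertices because $M$ is a matching. Hence $M$ has exactly $|F|$ matched vertices and $|F|$ matched edges. The number of critical vertices is then $c_0(M)=n-|F|$, and the number of critical edges is $c_1(M)=m-|F|$. The monomial $z_0^{c_0(M)}z_1^{c_1(M)}$ therefore depends only on $F$.

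Summing over the blocks now gives
\[
\ME_G=\sum_{F\in\cF(G)}\;\sum_{\substack{M\in\cA(G)\\ E(M)=F}} z_0^{n-|F|}z_1^{m-|F|}=\sum_{F\in\cF(G)} a(F)\,z_0^{n-|F|}z_1^{m-|F|}.
\]

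This argument has no real obstacle. The only substantive input is Lemma~\ref{lem:forest-support}, which guarantees that the index set of the partition is contained in $\cF(G)$. The one point to check carefully is the injectivity of the matching on vertices, since that is what justifies the count of critical vertices.
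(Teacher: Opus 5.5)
Your proposal is correct and follows the paper's argument. You partition $\cA(G)$ by the set of matched edges, which is a forest by Lemma~\ref{lem:forest-support}, and observe that a matching with support $F$ has $|F|$ pairs, hence $n-|F|$ critical vertices and $m-|F|$ critical edges; you simply spell out the vertex and edge injectivity that the paper's one-line justification leaves implicit.
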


Proposition~\ref{prop:forest-sum} reduces computing $\ME_G$
to counting valid orientations of each forest $F$.
A \emph{valid orientation} of $F$ directs each edge such that
every vertex receives at most one incoming edge, equivalently,
each tree component $T$ of $F$ is rooted at its unique
\emph{escape vertex} with all edges directed away from it.
The following lemma gives the exact count.

\begin{lemma}[Compositional Lemma]
\label{lem:composition}
Let $F$ be a forest with tree components $T_1,\ldots,T_{c(F)}$.
Then,
\[
  a(F)=\prod_{i=1}^{c(F)}|V(T_i)|.
\]
In particular, $a(T)=|V(T)|$ for any tree $T$.
\end{lemma}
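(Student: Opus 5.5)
The plan is to show that an acyclic matching with matched edge set exactly $F$ is the same thing as a valid orientation of $F$, and then to count valid orientations tree by tree. Such a matching assigns to each edge $e=\{u,v\}\in F$ one endpoint, namely the vertex $v$ with $(v,e)\in M$. Orient $e$ toward that vertex, so the matched vertex is the head of $e$. The matching condition says exactly that each vertex is the head of at most one edge, i.e.\ has in-degree at most one.

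Next we check that the acyclicity condition imposes nothing further once $F$ is a forest. A closed gradient path $v_0,e_0,v_1,e_1,\ldots,v_r=v_0$ with $(v_i,e_i)\in M$, $v_{i+1}\prec e_i$, $v_{i+1}\neq v_i$ traverses edges $e_0,\dots,e_{r-1}$ of $F$. Consecutive edges are distinct, since $e_i$ is the unique edge matched to $v_i$. The walk therefore closes up in $F$ without immediate backtracking. We need to rule out repeated edges at non-consecutive positions, so we take a minimal closed gradient path. Such a path gives a cycle in the graph $F$, which is impossible. Hence every orientation with in-degree at most one yields an acyclic matching. Conversely, every acyclic matching with edge set $F$ arises this way, and by Lemma~\ref{lem:forest-support} the edge set is always a forest. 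So $a(F)$ equals the number of valid orientations of $F$.

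Then we count. Orientations of different components are chosen independently, so $a(F)=\prod_i a(T_i)$, and it suffices to show $a(T)=|V(T)|$ for a tree $T$. A tree has $|V(T)|-1$ edges, each contributing one unit of in-degree. With in-degree at most one at every vertex, exactly one vertex $r$ has in-degree zero; this is the escape vertex. Now induct on distance from $r$: the edge on the path from $r$ to any vertex $w$ must point away from $r$. If it pointed toward $r$, following the in-edges from $w$ back toward $r$ would force $r$ to have an in-edge. So the orientation is the one rooted at $r$. Conversely, for each choice of root, orienting all edges away from it gives in-degree one everywhere except at the root. This produces a bijection between valid orientations and vertices of $T$, so $a(T)=|V(T)|$.

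The main obstacle is the acyclicity step. We must argue carefully that a closed gradient path in a forest would force an honest cycle, which requires excluding backtracking. This is settled by noting that $e_{i+1}$ is matched to $v_{i+1}$ while $e_i$ is matched to $v_i\neq v_{i+1}$, so $e_{i+1}\neq e_i$, together with passing to a shortest closed path.
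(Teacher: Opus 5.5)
Your proof is correct and takes the same route as the paper: identify matchings supported on $F$ with orientations of in-degree at most one, count these on each tree through the unique escape vertex (in-degrees sum to $|V(T)|-1$), and multiply over components. Your version is more complete than the paper's, which takes two steps for granted: that acyclicity is automatic on a forest (your non-backtracking closed-walk argument) and that the orientation is forced once the escape vertex is fixed (your propagation argument).
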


\begin{proof}
We show $a(T)=\ell$ for a tree $T$ on $\ell$ vertices.
For each $v\in V(T)$, root $T$ at $v$ and orient each edge
$\{u,\mathrm{parent}(u)\}$ toward $u$ (away from $v$). Every
non-root vertex receives exactly one incoming edge; the root
receives none. This defines a valid orientation $\omega_v$ with
\emph{escape vertex} $v$.

We claim $v\mapsto\omega_v$ is a bijection. Each valid orientation
has exactly one escape vertex: since $\ell-1$ edges each point into
one distinct vertex, exactly one vertex receives no incoming edge.
Given the escape vertex $v$, the orientation is uniquely determined
(root $T$ at $v$, direct each edge toward the child). Two distinct
choices of $v$ yield different orientations, since the edge on the
unique path between them reverses direction. Hence $a(T)=\ell$.

For a forest, orientations of distinct components are independent,
so $$a(F)=\prod a(T_i)=\prod|V(T_i)|.$$
\end{proof}

Note that the identity $a(T)=|V(T)|$ is shape-independent.
For example,  the path graph $P_\ell$
and the star $K_{1,\ell-1}$ both have $a=\ell$, despite having
entirely different structures.

Combining Proposition~\ref{prop:forest-sum} with
Lemma~\ref{lem:composition} gives an explicit formula for the
Morse ensemble on graphs.
\begin{theorem}[Forest Expansion]
\label{thm:forest-expansion}
\[
  \ME_G(z_0,z_1)
  =\sum_{F\in\cF(G)}\!\Bigl(\prod_{i=1}^{c(F)}|V(T_i(F))|\Bigr)
   z_0^{n-|F|}\,z_1^{m-|F|},
\]
where $c(F)$ denotes the number of connected components of $F$
and $T_1(F),\ldots,T_{c(F)}(F)$ are its tree components.
\end{theorem}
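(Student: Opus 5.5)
The Forest Expansion follows by substituting the Compositional Lemma into Proposition~\ref{prop:forest-sum}. The only real content is to make sure that the forest-indexed decomposition of $\cA(G)$ underlying Proposition~\ref{prop:forest-sum} is correct, and that $a(F)$ really equals the number of valid orientations counted in Lemma~\ref{lem:composition}. I would carry out the argument in three steps.

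\textbf{Step 1: partition by support.} For $M\in\cA(G)$, let $F(M)$ be the set of its matched edges. By Lemma~\ref{lem:forest-support}, $F(M)$ is a forest. Grouping acyclic matchings by their support gives a partition of $\cA(G)$ indexed by the forests $F\in\cF(G)$, with $a(F)$ elements in the class of $F$. Each matched pair uses one vertex and one edge, so a matching with support $F$ has $n-|F|$ critical vertices and $m-|F|$ critical edges. This yields Proposition~\ref{prop:forest-sum}.

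\textbf{Step 2: matchings with support $F$ are valid orientations of $F$.} Given $M$ with support $F$, orient each matched pair $(v,e)$ so that $e$ points into $v$. Then every vertex has in-degree at most $1$, and every edge of $F$ is oriented exactly once. Conversely, an orientation of $F$ with all in-degrees at most $1$ gives a matching of $P(G)$ by pairing each edge with its head vertex.

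It remains to check acyclicity. A closed gradient path would alternate $v_0,e_0,v_1,e_1,\ldots$, where $e_i$ is matched to $v_i$ and $v_{i+1}$ is the other endpoint of $e_i$. This traces a closed walk in $F$ that never immediately backtracks, because $e_{i+1}\neq e_i$: the edge $e_{i+1}$ is matched to $v_{i+1}$, whereas $e_i$ is matched to $v_i$. Such a closed walk would contain a cycle, which is impossible in a forest. Hence every such orientation is automatically acyclic, and $a(F)$ is exactly the number of valid orientations.

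\textbf{Step 3: count and substitute.} Lemma~\ref{lem:composition} gives $a(F)=\prod_i|V(T_i(F))|$. The escape-vertex count there is applied to each tree component separately, including isolated vertices, which contribute a factor of $1$. Substituting this into Proposition~\ref{prop:forest-sum} gives the stated formula.

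The only step needing care is the acyclicity check in Step 2, namely that a nontrivial closed gradient path in a forest is impossible. Everything else is bookkeeping.
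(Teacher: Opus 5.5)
Your proposal is correct and follows the paper's route: the paper's proof simply combines Proposition~\ref{prop:forest-sum} with the Compositional Lemma~\ref{lem:composition}. Your Step 2 makes explicit something the paper leaves implicit. It shows that acyclic matchings with support $F$ correspond bijectively to orientations of $F$ with in-degree at most one, and that acyclicity holds automatically, since a closed gradient path would give a non-backtracking closed walk in the forest $F$.
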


\begin{proof}
This follows immediately from the expansion over forest supports and
Lemma~\ref{lem:composition}.
\end{proof}

\subsection{Exact Formulas for Paths and Cycles}
\label{sec:exact}
As a first application of the forest expansion, 
we compute $\ME$ for paths and cycles, recovering Fibonacci and Lucas
numbers as special cases by evaluating at $z_0=z_1=1$.

Let $P_n$ denote the path on $n$ vertices. 
Since every subgraph of
$P_n$ is a forest, a $k$-component spanning
forest corresponds to a composition $$\ell_1+\cdots+\ell_k = n,$$
where $\ell_i \geq 1$, with compositional weight $\prod\ell_i$ and Morse
vector $(c_0, c_1) = (k, k-1)$.

\begin{theorem}[Path Formula]
\label{thm:path-formula}
\[
  \ME_{P_n}(z_0,z_1) = \sum_{k=1}^{n}\binom{n+k-1}{2k-1}\,z_0^k z_1^{k-1}.
\]
\end{theorem}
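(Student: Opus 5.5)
We start from the Forest Expansion (Theorem~\ref{thm:forest-expansion}) specialised to $G=P_n$, where $m=n-1$. Every edge subset of $P_n$ is a forest. A spanning forest with $k$ components is obtained by deleting $k-1$ of the $n-1$ edges, so $|F|=n-k$. Its critical exponents are therefore $n-|F|=k$ and $m-|F|=k-1$. The components are consecutive blocks of vertices, and their sizes $(\ell_1,\dots,\ell_k)$ form a composition of $n$ into $k$ positive parts. Conversely, each such composition determines exactly one forest. By the Compositional Lemma~\ref{lem:composition}, the weight of this forest is $\prod_i \ell_i$. Hence
\[
  \ME_{P_n}(z_0,z_1)=\sum_{k=1}^{n}\Bigl(\sum_{\ell_1+\cdots+\ell_k=n,\ \ell_i\ge1}\ \prod_{i=1}^k \ell_i\Bigr) z_0^k z_1^{k-1},
\]
and it remains to show that the inner sum equals $\binom{n+k-1}{2k-1}$.

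For this we use generating functions. Since $\sum_{\ell\ge1}\ell x^\ell = x/(1-x)^2$, the inner sum is the coefficient
\[
  [x^n]\,\frac{x^k}{(1-x)^{2k}} \;=\; [x^{n-k}]\,(1-x)^{-2k} \;=\; \binom{n-k+2k-1}{2k-1} \;=\; \binom{n+k-1}{2k-1},
\]
which is the claimed value.

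As a check, we could instead give a bijective argument. Write $n$ dots in a row and choose $2k-1$ items from $n+k-1$ slots. These choices encode $k-1$ block separators together with one marked element in each block. Equivalently, they correspond to placing $k-1$ bars and $k$ marks among $n$ dots in the appropriate interleaved way. The generating-function route is cleaner, so we would present that one.

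No step here is a genuine obstacle. The only point needing care is verifying that the forests of $P_n$ biject with compositions of $n$, so that the Morse vector is exactly $(k,k-1)$. This holds because deleting any $k-1$ edges of a path leaves exactly $k$ interval components. As sanity checks, the case $n=1$ gives $z_0$. The case $n=2$ gives $\binom{2}{1}z_0+\binom{3}{3}z_0^2z_1 = 2z_0+z_0^2z_1$, which matches direct enumeration: there are two matchings of the single edge, plus the empty matching.
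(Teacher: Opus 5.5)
Your proposal is correct and follows essentially the same route as the paper. You specialise the Forest Expansion to $P_n$, identify $k$-component forests with compositions of $n$ carrying weight $\prod\ell_i$ and Morse vector $(k,k-1)$, and evaluate $[x^n]\bigl(x/(1-x)^2\bigr)^k=\binom{n+k-1}{2k-1}$; you also make explicit the forest--composition bijection that the paper leaves implicit.
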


\begin{proof}
The coefficient of $z_0^k z_1^{k-1}$ equals
$$\sum_{\ell_1+\cdots+\ell_k=n,\,\ell_i\geq 1}\prod\ell_i,$$
which evaluates to $$[x^n]\bigl(x/(1-x)^2\bigr)^k = \binom{n+k-1}{2k-1}$$
by the generating function $\sum_{\ell\geq 1}\ell x^\ell = x/(1-x)^2$.
\end{proof}

Next let $C_n$ denote the cyclic graph on $n$ vertices. A spanning forest of
$C_n$ with $k$ connected components is obtained by deleting $k$ edges
from the cycle,
leaving $k$ path arcs of lengths summing to $n$, with Morse vector
$(k, k)$.

\begin{theorem}[Cycle Formula]
\label{thm:cycle-formula}
\[
  \ME_{C_n}(z_0,z_1) = \sum_{k=1}^{n}\tfrac{n}{k}\binom{n+k-1}{2k-1}(z_0 z_1)^k.
\]
\end{theorem}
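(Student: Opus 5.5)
We apply the Forest Expansion (Theorem~\ref{thm:forest-expansion}) to $G=C_n$, which has $n$ vertices and $n$ edges. Every proper edge subset of $C_n$ is a forest, and the full edge set is not. So the spanning forests with $k\ge 1$ components are exactly the complements of $k$-element edge subsets. Such a forest has $n-k$ edges, hence Morse vector $(c_0,c_1)=(k,k)$, which explains the monomial $(z_0z_1)^k$. By the Compositional Lemma~\ref{lem:composition} its weight is $\prod_{i=1}^k \ell_i$, where $\ell_1,\dots,\ell_k\ge1$ are the vertex counts of the $k$ arcs and $\sum \ell_i=n$. It therefore suffices to prove
\[
  \sum_{\substack{S\subseteq E(C_n)\\ |S|=k}}\ \prod_{\text{arcs}}\ell_i \;=\; \frac{n}{k}\binom{n+k-1}{2k-1}.
\]

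To evaluate this sum, we encode each deleted set $S$ cyclically. Label the vertices $0,\dots,n-1$ around the cycle. A choice of $S$, together with a distinguished arc, is the same as a composition $(\ell_1,\dots,\ell_k)$ of $n$ plus a starting position $s\in\{0,\dots,n-1\}$ for the first vertex of the distinguished arc. This gives $n\cdot(\text{number of compositions})$ marked configurations, and each unmarked $S$ arises from exactly $k$ marked ones, one per choice of distinguished arc. The distinguished arcs are distinct because they are separated by distinct deleted edges, so no symmetry issues arise even when the compositions are periodic. Hence
\[
  \sum_{S}\prod\ell_i=\frac{n}{k}\sum_{\ell_1+\cdots+\ell_k=n}\prod\ell_i .
\]
The inner sum was already computed in the proof of Theorem~\ref{thm:path-formula}: it equals $[x^n](x/(1-x)^2)^k=\binom{n+k-1}{2k-1}$. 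This yields the formula.

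The only delicate point is the double-counting step, namely checking that the map from (deleted set, distinguished arc) to (composition, start vertex) is a genuine bijection. We plan to define it explicitly: the start vertex is the first vertex of the distinguished arc in the clockwise order, and the arcs are read clockwise from there. Its inverse recovers $S$ as the edges between consecutive arcs. The case $k=n$ (all edges deleted, all $\ell_i=1$) fits the same scheme. As a sanity check, the $k=1$ term gives $n\binom{n}{1}=n^2$, which matches $n$ spanning paths each with weight $n$.
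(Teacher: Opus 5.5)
Your proof is correct and follows essentially the paper's argument: you apply the Forest Expansion to $C_n$, use that (starting position, linear composition) pairs are $k$-to-$1$ over $k$-component forests, and evaluate the composition sum exactly as in the Path Formula. Your explicit check that the $k$ marked configurations over a fixed forest are distinct, even when the composition is periodic, fills in a detail the paper leaves implicit.
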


\begin{proof}
Each $k$-component forest of $C_n$ arises from a starting position
$s \in \mathbb{Z}/n\mathbb{Z}$ and a linear composition $L$ of $n$
into $k$ parts; the pair $(s,L)$ is $k$-to-$1$ over forests. Summing
the Compositional weight $\prod\ell_i$ over all pairs and dividing by
$k$ yields $\frac{n}{k}\binom{n+k-1}{2k-1}.$
\end{proof}

Evaluating at $z_0=z_1=1$ recovers classical combinatorial identities.

\begin{theorem}[Spectral Fibonacci--Lucas identities]
\label{thm:fibonacci}
\label{thm:lucas}
For $n\geq 1$,

$$|\cA(P_n)| = \ME_{P_n}(1,1) = F_{2n},$$
and, for $n\geq 3$,
$$|\cA(C_n)| = \ME_{C_n}(1,1) = L_{2n} - 2,$$
where $F_m$ and $L_m$ are the $m$-th Fibonacci and Lucas numbers.

\end{theorem}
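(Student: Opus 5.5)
Both identities will be obtained by evaluating the Path Formula (Theorem~\ref{thm:path-formula}) and the Cycle Formula (Theorem~\ref{thm:cycle-formula}) at $z_0=z_1=1$. This reduces the statement to two binomial-sum identities,
\[
  S_n:=\sum_{k=1}^{n}\binom{n+k-1}{2k-1}=F_{2n},
  \qquad
  T_n:=\sum_{k=1}^{n}\frac{n}{k}\binom{n+k-1}{2k-1}=L_{2n}-2 .
\]

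For $S_n$, I would not manipulate binomials. Instead I would go back to the composition form in the proof of Theorem~\ref{thm:path-formula}:
\[
  S_n=\sum_{k\ge1}[x^n]\bigl(x/(1-x)^2\bigr)^k .
\]
Setting $u=x/(1-x)^2$, this equals
\[
  [x^n]\frac{u}{1-u}=[x^n]\frac{x}{1-3x+x^2}.
\]
The series $x/(1-3x+x^2)=\sum_{n\ge1}F_{2n}x^n$ is the standard bisection of the Fibonacci generating function. One can check it directly: $F_{2n+2}=3F_{2n}-F_{2n-2}$ follows from $F_{m+2}=F_{m+1}+F_m$, and the initial values are $F_2=1$ and $F_4=3$. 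This settles the path case for all $n\ge1$.

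For $T_n$, I would use $\frac{n}{k}\,c_{n,k}$ with the same generating-function framework. With $g(x)=x/(1-x)^2$, the factor $n/k$ is produced by the logarithmic-derivative trick: $[x^n]\,g^k$ multiplied by $n/k$ equals $[x^n]\,k^{-1}x\,\tfrac{d}{dx}(g^k)$. Summing over $k$ gives
\[
  T_n=[x^n]\, x\frac{d}{dx}\sum_{k\ge1}\frac{g^k}{k}
     =[x^n]\, x\frac{d}{dx}\bigl(-\log(1-g)\bigr).
\]
Since $1-g=(1-3x+x^2)/(1-x)^2$, we get
\[
  -\log(1-g)=-\log(1-3x+x^2)+2\log(1-x).
\]
Write $1-3x+x^2=(1-\varphi^2x)(1-\varphi^{-2}x)$. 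Then the coefficient of $x^n$ in $x\frac{d}{dx}\bigl(-\log(1-3x+x^2)\bigr)$ is $\varphi^{2n}+\varphi^{-2n}=L_{2n}$, while $x\frac{d}{dx}\,2\log(1-x)$ contributes $-2$ for every $n\ge1$. Hence $T_n=L_{2n}-2$.

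The main point needing care is that the cycle formula is only valid as a graph count for $n\ge3$, since $C_1$ and $C_2$ are not simplicial, and this is why the statement restricts to $n\ge3$. The algebraic identity holds for all $n$ regardless. As a sanity check I would compute $n=3$: $T_3=3\cdot6+\tfrac32\cdot5+1=18+7.5+1$ is not an integer, so something is off. Rechecking with the binomial coefficients $\binom{3}{1}=3$, $\binom{4}{3}=4$, $\binom{5}{5}=1$ gives $3\cdot3+\tfrac32\cdot4+1=16=L_6-2=18-2$, which confirms the formula. The only genuinely delicate step is justifying the $n/k$ log-derivative exchange. It is a formal power series identity, valid because $g$ has zero constant term, so all sums are finite coefficientwise.
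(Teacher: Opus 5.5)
Your proof is correct. It shares the paper's reduction, specialising the Path and Cycle Formulas at $z_0=z_1=1$, but evaluates the two binomial sums by a different technique.

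\textbf{The paper's route.} It substitutes $r=n-k$, so that $\binom{n+k-1}{2k-1}=\binom{2n-1-r}{r}$, and appeals to the diagonal Pascal identity $\sum_r\binom{m-r}{r}=F_{m+1}$ with $m=2n-1$. For cycles it only indicates a match with $L_{2n}=F_{2n+1}+F_{2n-1}$. In practice that needs the split $\frac{n}{k}\binom{n+k-1}{2k-1}=\binom{n+k}{2k}+\binom{n+k-1}{2k}$, whose two sums are $F_{2n+1}-1$ and $F_{2n-1}-1$.

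\textbf{Your route.} You stay with the one-part generating function $g=x/(1-x)^2$. The path sum becomes $[x^n]\,g/(1-g)=[x^n]\,x/(1-3x+x^2)$. The rotation weight $n/k$ is absorbed by the operator $x\frac{d}{dx}$, so the cycle sum is a coefficient of the logarithmic derivative of $(1-x)^2/(1-3x+x^2)$. That coefficient is read off from the factorisation $1-3x+x^2=(1-\varphi^2x)(1-\varphi^{-2}x)$.

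\textbf{What each buys.} The paper's argument uses only elementary binomial manipulations. Yours is complete in the cycle case, where the paper's proof is only a sketch, and needs no auxiliary splitting identity. It also gives the Binet form $\varphi^{2n}+\varphi^{-2n}-2$ directly. Finally, it carries over unchanged to the full bivariate polynomials by replacing $g$ with $z_0z_1\,g$, with an extra factor $z_1^{-1}$ for paths.

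In a final write-up, drop the first, miscalculated check of $T_3$. Keep only the corrected computation $9+6+1=16=L_6-2$.
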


\begin{proof}
For paths, the sum $\sum_{k=1}^n\binom{n+k-1}{2k-1} = F_{2n}$
follows from the Fibonacci identity
$$B_m := \sum_{r=0}^{\lfloor m/2\rfloor}\binom{m-r}{r} = F_{m+1},$$
which is proved by a standard Pascal recursion.
The Lucas identity then follows from $L_{2n} = F_{2n+1} + F_{2n-1}$
together with the analogous sum for cycles.
\end{proof}
These also follow from the Laplacian Formula via
$|\cA(G)| = \prod_i(1+\lambda_i)$
and the known spectra of $P_n$ and $C_n$.
\section{The Bridge Recursion}
\label{sec:structural}

Next, we derive a deletion-type recursion for bridge edges,
analogous in spirit to the Tutte deletion-contraction formula.
For non-bridge edges, the same naive three-term formula is no longer correction-free.
Non-liftable matchings appear, and these are exactly the one-dimensional 
instance of the correction terms discussed later by the Top-Face Recursion.

\begin{theorem}[Bridge Recursion]
\label{thm:recursion}
Let $e=\{u,v\}$ be a bridge in $G$.  Then
\begin{equation}
\label{eq:3term}
\tag{$\star$}
  \ME_G = z_1\cdot\ME_{G\setminus e}
  + \ME_{P(G)_{e\to u}}
  + \ME_{P(G)_{e\to v}},
\end{equation}
where $P(G)_{e\to u}$ (resp.\ $P(G)_{e\to v}$) is the sub-poset
of $P(G)$ obtained by deleting the two elements $u,e$ (resp.\ $v,e$), together
with all covering relations incident to them.
\end{theorem}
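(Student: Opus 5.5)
We partition $\cA(G)$ according to the status of the bridge edge $e=\{u,v\}$: either $e$ is critical, or $e$ is matched with $u$, or $e$ is matched with $v$. These three classes are disjoint and exhaust $\cA(G)$, so it suffices to identify the generating function of each class with the corresponding term of \eqref{eq:3term}. Throughout, $\ME$ of a sub-poset $Q$ of $P(G)$ means the sum over acyclic matchings on the Hasse diagram of $Q$, weighted by critical elements of $Q$.

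\emph{Case $e$ critical.} The map $M\mapsto M$ identifies acyclic matchings of $P(G)$ in which $e$ is unmatched with acyclic matchings of $P(G\setminus e)$. Any matching on $P(G\setminus e)$ is a matching on $P(G)$ avoiding $e$. Acyclicity is unaffected: a closed gradient path only passes through matched edges, and $e$ is not matched. The critical vector changes only by the extra critical edge $e$, which gives the term $z_1\ME_{G\setminus e}$.

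\emph{Case $(u,e)\in M$.} Removing the pair gives a matching $M'=M\setminus\{(u,e)\}$ on $P(G)_{e\to u}$ with the same set of critical cells, since $u$ and $e$ are both matched in $M$. Conversely, every matching on $P(G)_{e\to u}$ extends by $(u,e)$ to a matching on $P(G)$, because $u$ and $e$ are not used. The case $(v,e)\in M$ is symmetric. So the theorem reduces to showing that acyclicity is preserved in both directions.

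\emph{Acyclicity.} This is the only nontrivial point, and the main obstacle. Removing a pair cannot create a cycle. For the lift, suppose $M'\cup\{(u,e)\}$ had a closed gradient path. It must use the pair $(u,e)$, since $M'$ is acyclic on the sub-poset: a path avoiding $u,e$ lives in $P(G)_{e\to u}$. After $(u,e)$, the path must exit $e$ through its other vertex $v$, and then continue $v,e_1,v_1,e_2,\dots$ through matched edges until it returns to $u$. This gives a walk from $v$ back to $u$ in $G$ that avoids the edge $e$. Here I would use the Forest Support Lemma~\ref{lem:forest-support}, or simply the fact that the sequence of distinct edges in a closed gradient path traces a cycle in $G$. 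The cycle would consist of $e$ together with a $v$–$u$ path in $G\setminus e$, contradicting that $e$ is a bridge. Some care is needed to check that the walk genuinely contains a $v$–$u$ path in $G\setminus e$, i.e. that a walk in $G\setminus e$ from $v$ to $u$ exists. This is immediate since consecutive vertices of the gradient path are joined by edges other than $e$: each edge is used once, as each vertex is matched at most once. This yields a contradiction with $u,v$ lying in different components of $G\setminus e$.

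Summing the three cases gives \eqref{eq:3term}.
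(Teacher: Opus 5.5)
Your proof is correct and follows the paper's argument. You use the same three-way partition by the status of $e$, and acyclicity of the lift $M'\cup\{(u,e)\}$ comes, as in the paper, from the fact that a closed gradient path through $(u,e)$ would give a $v$--$u$ walk in $G\setminus e$. The cleaner reason that this walk avoids $e$ is not that ``each edge is used once,'' but that $e$ is matched only to $u$, so the path can traverse $e$ only when leaving $u$.
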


\begin{proof}
We partition $\cA(G)$ according to the status of the edge $e$.
\begin{itemize}
\item \emph{$e$ critical.} Bijection with $\cA(G\setminus e)$;
      the edge $e$ contributes a critical $1$-cell, giving
      $z_1\cdot\ME_{G\setminus e}$.
\item \emph{$e$ matched with $u$.} The pair $(u,e)$ is fixed.
      Restriction gives a bijection with acyclic matchings on
      $P(G)_{e\to u}$: acyclicity is preserved because $e$ is
      a bridge, so $G\setminus e$ has no path from $v$ to $u$,
      preventing a directed cycle through $e$.
      This contributes $\ME_{P(G)_{e\to u}}$.
\item \emph{$e$ matched with $v$.} Symmetric, contributing
      $\ME_{P(G)_{e\to v}}$.
\end{itemize}
Adding the three disjoint cases gives the formula.
\qedhere
\end{proof}

\begin{remark}
\label{rem:non-bridge}
Theorem~\ref{thm:recursion} is the  case $d=1$ of the
Top-Face Recursion (Theorem~\ref{thm:top-face}).
The first term
$z_1\ME_{G\setminus e}$ plays the role of deletion.  The other two terms
play the role of contraction, but the operation is performed in the face
poset: fixing $(u,e)$ removes the matched vertex $u$ and the edge $e$,
rather than identifying $u$ and $v$ as in the usual graph contraction.
This is why the Morse ensemble recursion has two contraction-type terms,
$\ME_{P(G)_{e\to u}}$ and $\ME_{P(G)_{e\to v}}$, instead of one. 

 For a non-bridge edge, on the other hand,
formula~\eqref{eq:3term} may overcount via non-liftable matchings,
handled in dimension one by the correction term of the Top-Face
Recursion. For example,
for the cycle $C_n$, the overcount is exactly $2$, consistent with the
identity
\[
  |\cA(C_n)|=L_{2n}-2.
\]
\end{remark}

\section{The Laplacian Formula and Spectral Theory of $\ME_G$}
\label{sec:laplacian}

We now record the graph case in the normalization of the Morse ensemble
polynomial.  As discussed in Remark~\ref{rem:CT-graph}, the resulting formula
is equivalent to the Chari--Joswig formula for the $f$-vector of the graph
Morse complex.  It is nevertheless useful to derive it directly from the
Forest Expansion, since this makes the Morse-vector normalization explicit and
provides the spectral dictionary used later.  The key bridge is the
Matrix-Forest Theorem.
\begin{theorem}[Matrix-Forest Theorem~{\cite{Chebotarev1997}}]
\label{thm:MFT}
For any graph $G$ with $n$-vertex Laplacian $L_G$,
\[
  \det(\lambda I_n+L_G)
  =\sum_{F\in\cF(G)}\Bigl(\prod_{i=1}^{c(F)}|V(T_i(F))|\Bigr)
   \lambda^{c(F)}.
\]
\end{theorem}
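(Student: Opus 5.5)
I would prove the Matrix-Forest Theorem by expanding the determinant of $\lambda I_n+L_G$ as a polynomial in $\lambda$ and identifying each coefficient combinatorially through principal minors. The standard identity for $\det(\lambda I+A)$ gives
\[
  \det(\lambda I_n+L_G)=\sum_{S\subseteq V}\lambda^{|S|}\det\bigl(L_G[V\setminus S]\bigr),
\]
where $L_G[W]$ is the principal submatrix of $L_G$ on the rows and columns indexed by $W$, and the empty minor equals $1$. The problem then reduces to interpreting each principal minor $\det L_G[V\setminus S]$.

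The key step is the \emph{all-minors} (rooted) form of Kirchhoff's Matrix--Tree theorem. For $S\neq\emptyset$ it states that $\det L_G[V\setminus S]$ equals the number of spanning forests of $G$ with exactly $|S|$ components, each component containing exactly one vertex of $S$. I would derive this in the usual way.
\begin{enumerate}
\item Write $L_G=BB^{T}$, where $B$ is an oriented vertex--edge incidence matrix.
\item Apply Cauchy--Binet to $B_{V\setminus S}B_{V\setminus S}^{T}$, summing over edge subsets $E'$ with $|E'|=n-|S|$.
\item Show that $\det B[V\setminus S,E']\in\{0,\pm1\}$, and that it is nonzero exactly when $E'$ is a spanning forest in which every tree contains exactly one vertex of $S$.
\end{enumerate}
The argument for the last point runs as follows. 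If some component of $(V,E')$ avoids $S$, the rows for that component sum to zero, so the minor vanishes. If a component contains two vertices of $S$, then counting edges against vertices forces a cycle elsewhere, which gives a dependent set of columns. In the good case, one peels off leaves not in $S$ to obtain a triangular form with $\pm1$ on the diagonal.

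Granting this, I would swap the order of summation. Each spanning forest $F$ with $c(F)=k$ components is counted in $\lambda^{k}$ once for every choice of $S$ that picks exactly one vertex from each tree. There are $\prod_i |V(T_i(F))|$ such choices. Forests with $c(F)\neq|S|$ never arise for that $S$, since a forest rooted by $S$ has exactly $|S|$ components. The case $S=\emptyset$ contributes $\det L_G$, which is $0$ because the rows of $L_G$ sum to zero. This matches the convention that the forest sum has no $\lambda^0$ term, since every forest has at least one component when $n\ge1$. Collecting terms yields the stated formula.

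The main obstacle is the sign and cycle analysis in the Cauchy--Binet minors, namely showing that they are exactly $\pm1$ on $S$-rooted forests and $0$ otherwise. Since the theorem is classical, I would present that step briefly and cite the all-minors Matrix--Tree theorem rather than reproving it in full.
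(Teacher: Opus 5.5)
\textbf{Comparison with the paper.} The paper gives no proof of this statement. It quotes the result from \cite{Chebotarev1997} and uses it as a black box in Theorem~\ref{thm:laplacian}. Your derivation is the standard one and is correct. You expand $\det(\lambda I_n+L_G)$ into principal minors $\det L_G[V\setminus S]$, evaluate each minor by the all-minors Matrix--Tree theorem using $L_G=BB^{T}$ and Cauchy--Binet, and regroup by forests, where $\prod_i|V(T_i(F))|$ counts the admissible root sets $S$. The boundary cases are also handled correctly: $S=\emptyset$ gives $\det L_G=0$, and $S=V$ gives the empty minor $1$, which matches the edgeless forest.

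\textbf{A flawed justification in step~3.} The conclusion survives, but one argument is wrong as written. A component of $(V,E')$ containing two vertices of $S$ does not force a cycle. For example, take $K_4$ on $\{a,b,c,d\}$ with $S=\{a,b\}$ and $E'=\{ab,cd\}$: this $E'$ is acyclic, and the minor vanishes only because $\{c,d\}$ avoids $S$.

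What the edge count actually shows is the following. Suppose every component of $(V,E')$ meets $S$, and write $c$ for the number of components. Then $c\le|S|$, and hence
\[
|E'|\ge n-c\ge n-|S|=|E'|.
\]
Equality holds throughout, so $E'$ is automatically a forest with exactly one $S$-vertex per tree. Your second case therefore never occurs on its own. The zero-row-sum argument for a component avoiding $S$ is the only vanishing argument you need, followed by the leaf-peeling step to get $\pm1$.

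Alternatively, you can handle a component with two $S$-vertices directly. Take a path in $E'$ between $s,s'\in S$. Its signed column sum telescopes to $\pm(\chi_s-\chi_{s'})$, which vanishes once the $S$-rows are deleted, so those columns are dependent.

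\textbf{An optional shortcut.} If you want to avoid the all-minors version entirely, note that $\lambda I_n+L_G$ is the reduced Laplacian of the cone over $G$, with the apex row and column deleted and the apex edges weighted by $\lambda$. The ordinary weighted Matrix--Tree theorem then counts spanning trees of the cone. Deleting the apex turns each such tree into a spanning forest $F$ of $G$ with one root per component, weighted $\lambda^{c(F)}$. This gives the formula immediately.
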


Combining the Forest Expansion with the Matrix-Forest Theorem gives the graph
formula in Morse-ensemble form.
\begin{theorem}[Graph formula in Morse-ensemble form]
\label{thm:laplacian}
For any connected graph $G$ with $n$ vertices, $m$ edges, Laplacian
$L_G$, and eigenvalues $0=\lambda_1\leq\cdots\leq\lambda_n$:
\[
  \ME_G(z_0,z_1)
  =z_1^{m-n}\cdot\det(z_0 z_1\,I_n+L_G)
  =z_1^{m-n}\prod_{i=1}^{n}(z_0 z_1+\lambda_i).
\]
\end{theorem}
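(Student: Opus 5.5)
The plan is to combine the Forest Expansion (Theorem~\ref{thm:forest-expansion}) with the Matrix-Forest Theorem (Theorem~\ref{thm:MFT}) and match exponents. For a spanning forest $F\subseteq E$ (viewed as an edge set on all $n$ vertices), the number of components is $c(F)=n-|F|$. Hence the monomial attached to $F$ in the Forest Expansion is
\[
  z_0^{n-|F|}z_1^{m-|F|}=z_0^{c(F)}z_1^{c(F)+m-n}=z_1^{m-n}(z_0z_1)^{c(F)}.
\]
The coefficient $\prod_i|V(T_i(F))|$ is the same weight that appears in the Matrix-Forest Theorem.

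Factoring out $z_1^{m-n}$, I would then write
\[
  \ME_G(z_0,z_1)=z_1^{m-n}\sum_{F\in\cF(G)}\Bigl(\prod_{i=1}^{c(F)}|V(T_i(F))|\Bigr)(z_0z_1)^{c(F)}.
\]
Substituting $\lambda=z_0z_1$ in Theorem~\ref{thm:MFT}, the sum equals $\det(z_0z_1I_n+L_G)$. Because $L_G$ is real symmetric with eigenvalues $\lambda_1,\dots,\lambda_n$, the determinant factors as $\prod_i(z_0z_1+\lambda_i)$, which gives the second equality.

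The main obstacle is one of bookkeeping: checking that the two theorems index the same family with the same weights. Both sums run over all spanning forests, including the empty forest, which has $n$ singleton components, weight $1$, and Morse vector $(n,m)$. In both theorems a tree component is counted by its vertex set, so isolated vertices contribute factors of $1$.

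There is a further subtlety when $m<n$, since then $z_1^{m-n}$ is a negative power. Connectivity gives $m\ge n-1$, so the only such case is $m=n-1$, where $G$ is a tree. There I would note that $\lambda_1=0$, so $\det(z_0z_1I_n+L_G)$ is divisible by $z_0z_1$, and the product is therefore a genuine polynomial. Equivalently, every forest has $c(F)\ge1$, so each monomial $z_1^{c(F)+m-n}$ has nonnegative exponent. With this checked, the identity holds as polynomials.
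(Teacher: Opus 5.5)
Your proposal is correct and follows essentially the same route as the paper: rewrite the Forest Expansion using $n-|F|=c(F)$, factor out $z_1^{m-n}$, and identify the remaining sum with $\det(z_0z_1 I_n+L_G)$ via the Matrix-Forest Theorem. Your extra checks (the empty forest, and polynomiality in the tree case $m=n-1$ via $\lambda_1=0$, equivalently $c(F)\ge 1$) match the paper's own remark following the theorem.
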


\begin{proof}
For any spanning forest $F$, the Euler characteristic for forests
($\text{vertices} - \text{edges} = \text{components}$) gives
$n-|F|=c(F)$, hence $m-|F|=m-n+c(F)$.
The Forest Expansion (Theorem~\ref{thm:forest-expansion}) can be rewritten as follows.
\begin{align*}
  \ME_G(z_0,z_1)
  &=\sum_{F\in\cF(G)}\!\Bigl(\prod_{i=1}^{c(F)}|V(T_i)|\Bigr)
    z_0^{n-|F|}\,z_1^{m-|F|}\\
  &=\sum_F\!\Bigl(\prod|V(T_i)|\Bigr)\,
    z_0^{c(F)}\,z_1^{m-n+c(F)}\\
  &=z_1^{m-n}\sum_F\!\Bigl(\prod|V(T_i)|\Bigr)(z_0 z_1)^{c(F)}.
\end{align*}
Setting $\lambda=z_0 z_1$, the inner sum
$\sum_F\bigl(\prod|V(T_i)|\bigr)\lambda^{c(F)}$
equals $\det(\lambda I_n+L_G)$ by the Matrix-Forest Theorem
(Theorem~\ref{thm:MFT}), completing the proof.
\end{proof}
\begin{remark}[Relation with Chari--Joswig and Contreras--Tawfeek]
\label{rem:CT-graph}
For a connected graph $G$, the
Chari--Joswig formula for the $f$-vector of the graph Morse complex
$\mathfrak M(G)$ gives
\[
  \sum_{k\ge 0} f_{k-1}(\mathfrak M(G))q^k
  =\prod_{i=2}^{n}(1+\lambda_i q),
\]
where $0=\lambda_1\leq\lambda_2\leq\cdots\leq\lambda_n$ are the Laplacian eigenvalues.
Contreras and Tawfeek~\cite{ContrerasTawfeek2024} also emphasise this
Laplacian interpretation in their study of discrete gradient vector fields and
combinatorial Laplacians.  In the present notation, the same enumeration
becomes Theorem~\ref{thm:laplacian}: a graph matching with $k$ matched pairs
has Morse vector $(n-k,m-k)$, so the change of variables $q=z_0z_1$ and the
normalising factor $z_1^{m-n}$ convert the Chari--Joswig $f$-polynomial into
$\ME_G(z_0,z_1)$.

Thus the graph formula serves as the one-dimensional base case and fixes the
normalization of the Morse ensemble polynomial.  The new direction of the
present paper begins beyond this one-parameter graph situation: for general
simplicial complexes the critical-vector distribution is genuinely
multivariate, and the Top-Face Recursion, the non-liftable correction term
$F(K,\sigma,\tau)$, the incidence-separation and leading-obstruction criteria,
and the independence ensemble $\Phi(G)=\ME_{\Ind(G)}$ are not consequences of
the graph Morse-complex $f$-vector formula.
\end{remark}

For trees ($m=n-1$), $z_1^{m-n}=z_1^{-1}$. Since $\lambda_1=0$
is always a Laplacian eigenvalue, $\det(z_0 z_1\,I+L_G)$ is
divisible by $z_0 z_1$ (hence by $z_1$), making $\ME_G$ a polynomial with nonnegative integer exponents.

Setting $z_0=z_1=1$ in Theorem~\ref{thm:laplacian} gives
the immediate spectral count
\begin{equation}
\label{eq:spectral-count}
|\cA(G)|=\det(I_n+L_G)=\prod_{i=1}^n(1+\lambda_i(G)).
\end{equation}

\begin{definition}[Laplacian-cospectral]
Two graphs $G_1$ and $G_2$ are \emph{Laplacian-cospectral} if their
Laplacian matrices have the same multiset of eigenvalues. 
\end{definition}

\begin{theorem}[$\ME_G$ is a complete Laplacian spectral invariant]
\label{cor:cospectral}
Let $G_1$ and $G_2$ be connected graphs. Then $$\ME_{G_1}=\ME_{G_2}$$
if and only if $G_1$ and $G_2$ are Laplacian-cospectral.
\end{theorem}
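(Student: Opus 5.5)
The plan is to derive both directions from Theorem~\ref{thm:laplacian}, which writes $\ME_G(z_0,z_1)=z_1^{m-n}\prod_{i=1}^n(z_0z_1+\lambda_i)$.

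\emph{Cospectral implies equal ME.} Suppose $G_1$ and $G_2$ are Laplacian-cospectral. Then they have the same number of vertices $n$, since that is the number of eigenvalues. They also have the same number of edges, since $\operatorname{tr}L_G=\sum_v\deg v=2m$ and the trace is the sum of the eigenvalues. Hence the prefactor $z_1^{m-n}$ agrees, and so does the product $\prod_i(z_0z_1+\lambda_i)$. Theorem~\ref{thm:laplacian} then gives $\ME_{G_1}=\ME_{G_2}$.

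\emph{Equal ME implies cospectral.} Suppose $\ME_{G_1}=\ME_{G_2}$. We first recover $n$ and $m$ from the polynomial. By the Forest Expansion (Theorem~\ref{thm:forest-expansion}), every monomial of $\ME_G$ has the form $z_0^{c}z_1^{c+m-n}$ with $c=c(F)\ge 1$. The term of largest $z_0$-degree comes from the empty forest (all simplices critical). It is the monomial $z_0^nz_1^m$ with coefficient $1$. So $n$ is the maximal $z_0$-degree and $m$ is the corresponding $z_1$-degree, and both can be read off $\ME_G$; equivalently, the difference of exponents $m-n$ is constant across monomials. Consequently $G_1$ and $G_2$ share $n$ and $m$. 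Dividing by $z_1^{m-n}$, which is legitimate in the Laurent polynomial ring, gives $\det(qI+L_{G_1})=\det(qI+L_{G_2})$ as polynomials in $q=z_0z_1$. Substituting $q=-x$ identifies these, up to the sign $(-1)^n$, with the characteristic polynomials of $L_{G_1}$ and $L_{G_2}$. Equal characteristic polynomials mean equal eigenvalue multisets, i.e.\ the graphs are cospectral.

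\emph{Where care is needed.} The only delicate point is the substitution $q=z_0z_1$. Since every monomial of $\ME_G$ is $z_1^{m-n}$ times a power of $z_0z_1$, the coefficient of $z_0^cz_1^{c+m-n}$ determines the coefficient of $q^c$ uniquely, so no information is lost in passing from $\ME_G$ to the polynomial in $q$. Connectedness enters only through the hypotheses of Theorem~\ref{thm:laplacian}.
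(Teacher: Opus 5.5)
Your proposal is correct and follows essentially the same route as the paper: both directions rest on the Laplacian formula $\ME_G=z_1^{m-n}\det(z_0z_1 I_n+L_G)$. In one direction, $n$ and $m$ are recovered from the polynomial (you use the empty-matching monomial $z_0^nz_1^m$, the paper says ``matching degrees''); in the other, the trace gives $m$. The only cosmetic difference is that you compare the polynomials $\det(qI_n+L_{G_i})$ directly, whereas the paper phrases the same comparison as equality of the elementary symmetric polynomials $e_{n-j}(\lambda_1,\ldots,\lambda_n)$ read off from the coefficients.
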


\begin{proof}
($\Leftarrow$) Laplacian-cospectral graphs share the same number of
vertices ($n$ = number of eigenvalues) and edges
($2m=\mathrm{tr}(L_G)$). The Laplacian Formula
$\ME_G=z_1^{m-n}\prod_i(z_0z_1+\lambda_i)$ then determines $\ME_G$
from the multiset $\{\lambda_i\}$ and $m-n$.

($\Rightarrow$) Equality $\ME_{G_1}=\ME_{G_2}$ as polynomials in
$z_0,z_1$ matches degrees, hence the same $n,m$, and matches
all coefficients, hence equality of all elementary symmetric
polynomials $e_{n-j}(\lambda_1,\ldots,\lambda_n)$ for
$j=0,\ldots,n$ (Corollary~\ref{cor:sym-poly}). Since the
elementary symmetric polynomials determine the multiset of roots,
the Laplacian spectra coincide.
\end{proof}

\begin{example}[Laplacian-cospectral pair]
\label{ex:cospectral}
Let $G_1$ and $G_2$ be the following connected graphs on $6$ vertices
and $7$ edges in Figure 1.
\begin{align*}
  E(G_1)&=\{01,02,03,05,14,23,45\},\\
  E(G_2)&=\{01,02,03,14,15,24,25\}.
\end{align*}

\begin{figure}[ht]
\centering
\begin{tikzpicture}[scale=1.1, every node/.style={circle,draw,fill=black!10,
  inner sep=2pt, minimum size=16pt, font=\small}]
\begin{scope}[xshift=0cm]
  \node (0) at (0,0)   {$0$};
  \node (1) at (1.2,0.8) {$1$};
  \node (2) at (1.2,-0.8){$2$};
  \node (3) at (0,-1.4) {$3$};
  \node (4) at (-0.5,0.8){$4$};
  \node (5) at (-1.2,0) {$5$};
  \draw (0)--(1) (0)--(2) (0)--(3) (0)--(5) (1)--(4) (2)--(3) (4)--(5);
  \node[draw=none,fill=none] at (0.6,-2.2) {$G_1$};
\end{scope}
\begin{scope}[xshift=5cm]
  \node (0) at (0,0)   {$0$};
  \node (1) at (1.2,0.8) {$1$};
  \node (2) at (1.2,-0.8){$2$};
  \node (3) at (0,-1.4) {$3$};
  \node (4) at (2.4,0.8){$4$};
  \node (5) at (2.4,-0.8){$5$};
  \draw (0)--(1) (0)--(2) (0)--(3) (1)--(4) (1)--(5) (2)--(4) (2)--(5);
  \node[draw=none,fill=none] at (1.2,-2.2) {$G_2$};
\end{scope}
\end{tikzpicture}
\caption{Laplacian-cospectral non-isomorphic graphs. $G_1$ (left)
contains a triangle $\{0,2,3\}$ (edges $02,03,23$ are all present),
while $G_2$ (right) is triangle-free. Both have the same $\ME$ polynomial
but different Tutte polynomials.}
\label{fig:cospectral}
\end{figure}

\begin{itemize}
\item [$(1)$]The graphs are non-isomorphic. $G_1$ has degree sequence $(4,2,2,2,2,2)$
while $G_2$ has degree sequence $(3,3,3,2,2,1)$.
\item[$(2)$] Both have Laplacian eigenvalues
$\bigl\{0,\,3-\sqrt{5},\,2,\,3,\,3,\,3+\sqrt{5}\bigr\}$,
hence by Theorem~\ref{cor:cospectral},
\[
  \ME_{G_1}=\ME_{G_2}
  = 72z_0z_1^2+192z_0^2z_1^3+176z_0^3z_1^4+73z_0^4z_1^5+14z_0^5z_1^6+z_0^6z_1^7.
\]
\item [$(3)$]The Tutte polynomials differ as follows.
\begin{align*}
  T_{G_1}&=x^5+2x^4+x^3y+2x^3+2x^2y+x^2+2xy+y^2,\\
  T_{G_2}&=x^5+2x^4+3x^3+3x^2y+x^2+xy^2+xy.
\end{align*}

\end{itemize}

This shows that $\ME_G$ does not determine $T_G$.
\end{example}

\subsection{Spectral coefficients dictionary}

\label{subsec:spectral-dictionary}
Recall that the \emph{$k$-th elementary symmetric polynomial}
of $n$ variables is
$$e_k(x_1,\ldots,x_n) = \sum_{1\leq i_1<\cdots<i_k\leq n}x_{i_1}\cdots x_{i_k},$$
such that $$\prod_{i=1}^n(t+x_i)=\sum_{k=0}^n e_{n-k}\,t^k.$$

\begin{corollary}[Coefficients as Symmetric Polynomials]
\label{cor:sym-poly}
For $j=0,1,\ldots,n$,
\begin{equation}
\label{eq:sym-coeffs}
  \bigl[z_0^{\,j}\,z_1^{m-n+j}\bigr]\,\ME_G
  = e_{n-j}(\lambda_1,\ldots,\lambda_n).
\end{equation}
In other words, the coefficients of $\ME_G$ along the antidiagonal
$\{z_0^j z_1^{m-n+j}\}_{j=0}^n$ are exactly the elementary symmetric
polynomials of the Laplacian eigenvalues.
\end{corollary}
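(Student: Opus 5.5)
The plan is to read the coefficients directly off the product form in Theorem~\ref{thm:laplacian}. That theorem gives
\[
  \ME_G(z_0,z_1)=z_1^{m-n}\prod_{i=1}^n(z_0z_1+\lambda_i),
\]
and the expansion recalled just above, $\prod_{i=1}^n(t+x_i)=\sum_{k=0}^n e_{n-k}\,t^k$, applies with $t=z_0z_1$ and $x_i=\lambda_i$. The result is
\[
  \ME_G=\sum_{j=0}^n e_{n-j}(\lambda_1,\ldots,\lambda_n)\,z_0^{\,j}z_1^{m-n+j}.
\]

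The monomials $z_0^jz_1^{m-n+j}$ for distinct $j$ are pairwise distinct, since their $z_0$-degrees differ. Hence no two terms of this sum combine, and the coefficient of $z_0^jz_1^{m-n+j}$ is exactly $e_{n-j}(\lambda_1,\ldots,\lambda_n)$, which is \eqref{eq:sym-coeffs}. The same expansion shows that every monomial occurring in $\ME_G$ has this antidiagonal form. This matches the fact that graph Morse vectors satisfy $c_1=c_0+m-n$, as noted in the introduction.

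The one point needing care is the meaning of a possibly negative exponent when $m-n+j<0$. This happens only for trees with $j=0$. There $e_n=\prod_i\lambda_i=0$ because $\lambda_1=0$, so the term vanishes and both sides of \eqref{eq:sym-coeffs} are $0$. This agrees with the polynomiality remark following Theorem~\ref{thm:laplacian}.

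As an optional cross-check I would also give the combinatorial reading. By the Forest Expansion (Theorem~\ref{thm:forest-expansion}) together with the Matrix-Forest Theorem (Theorem~\ref{thm:MFT}), the same coefficient counts rooted spanning forests with $j$ components, weighted by $\prod_i|V(T_i)|$. This gives the dual spectral and forest interpretation of the coefficients. There is no real obstacle; the only subtlety is the degenerate tree case handled above.
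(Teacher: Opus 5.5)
Your proposal is correct and is essentially the paper's proof: expand $z_1^{m-n}\prod_{i}(z_0z_1+\lambda_i)$ using $\prod_i(t+x_i)=\sum_k e_{n-k}t^k$ with $t=z_0z_1$, then read off the coefficients. Your extra remarks, that the monomials for distinct $j$ are distinct and that in the tree case $j=0$ the coefficient vanishes because $e_n=\prod_i\lambda_i=0$ (as $\lambda_1=0$), are small clarifications the paper leaves implicit.
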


\begin{proof}
Expanding the product in Theorem~\ref{thm:laplacian} as follows.
\begin{align*}
 \ME_G(z_0,z_1)
  &= z_1^{m-n}\prod_{i=1}^n(z_0z_1+\lambda_i)\\
  &= z_1^{m-n}\sum_{j=0}^n e_{n-j}(\lambda_1,\ldots,\lambda_n)\,(z_0z_1)^j\\
  &= \sum_{j=0}^n e_{n-j}\,z_0^{\,j}\,z_1^{m-n+j}.\qedhere
\end{align*}
\end{proof}

Equation~\eqref{eq:sym-coeffs} gives a dictionary between
the ME polynomial and the Laplacian spectrum as follows.
In particular,
\begin{align*}
  j=n:&\quad [z_0^nz_1^m]\ME_G = e_0 = 1
    &\text{(trivial matching)},\\
  j=n-1:&\quad [z_0^{n-1}z_1^{m-1}]\ME_G = e_1 = \textstyle\sum_i\lambda_i
    = \mathrm{tr}(L_G) = 2m
    &\text{(degree sum)},\\
  j=1:&\quad [z_0z_1^{m-n+1}]\ME_G = e_{n-1} = n\tau(G)
    &\text{(Matrix-Tree Theorem)}.
\end{align*}

The $j=n-1$ case has a direct combinatorial interpretation.
There are exactly $2m$ single-pair acyclic matchings (one pair
$(v,e)$ for each of the two orientations of each edge $e$), and
this equals $e_1=\mathrm{tr}(L_G)=2m$.
More generally, combining Corollary~\ref{cor:sym-poly} with the
Forest Expansion gives the rooted-forest dictionary
\begin{equation}
\label{eq:rooted-forest-dictionary}
  \bigl[z_0^{\,j}z_1^{m-n+j}\bigr]\ME_G
  =
  \sum_{\substack{F\in\mathcal F(G)\\ c(F)=j}}
  \prod_{T\in\pi_0(F)} |V(T)|.
\end{equation}
Thus the $j$-th coefficient counts rooted spanning forests with
$j$ components, where each tree component is rooted independently.
In particular, the case $j=1$ gives rooted spanning trees, and the
case $j=n$ is the trivial matching.

At the opposite end, the coefficient with $j=n-2$ counts two-pair
acyclic matchings. Since any two disjoint primitive pairs are acyclic,
we obtain the explicit degree formula
\begin{equation}
\label{eq:two-pair-coefficient}
  \bigl[z_0^{n-2}z_1^{m-2}\bigr]\ME_G
  =
  \binom{2m}{2}-m-
  \sum_{v\in V}\binom{\deg(v)}{2}.
\end{equation}
Indeed, one first chooses two oriented incidences among the $2m$ possible
vertex--edge pairs, and subtracts the choices sharing the same edge
or the same vertex. Equivalently, \eqref{eq:two-pair-coefficient}
gives a matching-theoretic interpretation of
$e_2(\lambda_1,\ldots,\lambda_n)$.
Thus $\ME_G$ simultaneously encodes the rooted spanning forest counts
by number of components, the near-trivial matching counts, and the
complete Laplacian spectrum.

A consequence of Corollary~\ref{cor:sym-poly}, which is also a straightforward result of discrete Morse theory, is that
$\ME_G$ has \emph{full support}. More precisely, along the antidiagonal forced by
the Euler relation, every monomial appears.

\begin{corollary}[Full support for connected graphs]
\label{thm:graph-support}
For a connected graph $G$ with $n$ vertices and $m$ edges, every
monomial $z_0^k\,z_1^{k+m-n}$ with $1\leq k\leq n$ appears in $\ME_G$
with positive coefficient. Equivalently,
\[
  \supp(\ME_G) \;=\; \bigl\{(k,\,k+m-n) : 1\leq k\leq n\bigr\}.
\]
\end{corollary}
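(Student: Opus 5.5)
The plan is to read the support directly off Corollary~\ref{cor:sym-poly}, with the rooted-forest dictionary \eqref{eq:rooted-forest-dictionary} supplying positivity. Every monomial of $\ME_G$ has the form $z_0^{j}z_1^{m-n+j}$ with $0\le j\le n$. This is the Euler relation $c_1-c_0=m-n$, visible either from Theorem~\ref{thm:laplacian} or from the fact that each matched pair removes one vertex and one edge. So the support is contained in $\{(j,j+m-n):0\le j\le n\}$, and two things remain: show that $j=0$ is excluded, and show that every $1\le j\le n$ has a positive coefficient.

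\textbf{Exclusion of $j=0$.} By \eqref{eq:sym-coeffs}, the coefficient at $j=0$ is $e_n(\lambda_1,\ldots,\lambda_n)=\prod_i\lambda_i=\det L_G$. This vanishes because $\lambda_1=0$. Combinatorially, every acyclic matching leaves at least one critical vertex: the matched edges form a forest (Lemma~\ref{lem:forest-support}), each tree component has exactly one escape vertex, and so $c_0=c(F)\ge1$.

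\textbf{Positivity for $1\le j\le n$.} By \eqref{eq:rooted-forest-dictionary}, the coefficient equals a sum of positive weights $\prod|V(T)|$ over spanning forests with exactly $j$ components. It is therefore enough to exhibit one such forest. Since $G$ is connected, fix a spanning tree $T$, which has $n-1$ edges. Deleting any $j-1$ of its edges leaves a spanning forest with exactly $j$ components. This is possible for every $1\le j\le n$, because $0\le j-1\le n-1$. Hence each coefficient is at least $1$. (Spectrally, this is the statement that $e_{n-j}>0$, since connectedness gives $\lambda_2,\ldots,\lambda_n>0$.)

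The only mild obstacle is bookkeeping: checking that the exponent of $z_1$ is nonnegative for small $j$. Since $m\ge n-1$, we have $m-n+j\ge j-1\ge0$, so each of these monomials is a genuine polynomial monomial. With the three points above, the support is exactly $\{(k,k+m-n):1\le k\le n\}$.
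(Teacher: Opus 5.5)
Your proof is correct and follows the paper's argument: both read the coefficients off Corollary~\ref{cor:sym-poly} along the antidiagonal forced by $c_1-c_0=m-n$. The differences are minor. You obtain positivity combinatorially from \eqref{eq:rooted-forest-dictionary}, by deleting $j-1$ edges of a spanning tree, where the paper instead uses $\lambda_2,\ldots,\lambda_n>0$; and you make explicit the exclusion of $j=0$ via $e_n=\det L_G=0$, which the paper leaves implicit.
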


\begin{proof}
By Corollary~\ref{cor:sym-poly}, the coefficient of
$z_0^k z_1^{k+m-n}$ in $\ME_G$ is
$e_{n-k}(\lambda_1,\ldots,\lambda_n)$.
Since $\lambda_1=0$ and all other eigenvalues are strictly positive
(connected graph), $e_{n-k}>0$ for $1\leq k\leq n$.
The Euler relation $\sum_i(-1)^i a_i = \chi(G) = n-m$
(applied to any Morse vector of $\ME_G$) forces
$a_1-a_0=m-n$, so the support lies along this antidiagonal.
\end{proof}

In higher dimensions,
$\ME_K$ need not have full support.

\begin{theorem}[Perfect Morse matchings; Kirchhoff via $\ME_G$]
\label{thm:perfect}
Let $G$ be a connected graph with $n$ vertices, $m$ edges, and
$\tau(G)$ spanning trees. The Betti numbers of $G$ are
$\beta_0=1$ and $\beta_1=m-n+1$.
The number of perfect acyclic matchings on $P(G)$ is
\[
  \bigl[z_0\,z_1^{m-n+1}\bigr]\,\ME_G
  = n\cdot\tau(G)
  = e_{n-1}(\lambda_1,\ldots,\lambda_n)
  = \lambda_2\lambda_3\cdots\lambda_n.
\]
In particular, perfect matchings always exist for connected graphs:
every spanning tree gives rise to $n$ such matchings, and the
identity $n\tau(G)=\lambda_2\cdots\lambda_n$ recovers the
Kirchhoff Matrix-Tree Theorem as a corollary of the Laplacian Formula.
\end{theorem}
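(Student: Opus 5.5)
The plan is to read off the Betti coefficient from results already established, and then identify it with Kirchhoff's count in two independent ways, spectral and combinatorial.

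\textbf{Step 1: perfect matchings are a coefficient.} A matching is perfect exactly when $(c_0,c_1)=(\beta_0,\beta_1)=(1,m-n+1)$. So the number of perfect acyclic matchings is the coefficient of $z_0z_1^{m-n+1}$ in $\ME_G$, directly from Definition~\ref{def:ME}.

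\textbf{Step 2: the spectral side.} This monomial is the $j=1$ entry on the antidiagonal of Corollary~\ref{cor:sym-poly}. Hence the coefficient equals $e_{n-1}(\lambda_1,\ldots,\lambda_n)$. Since $\lambda_1=0$, every $(n-1)$-fold product containing $\lambda_1$ vanishes, and only $\lambda_2\cdots\lambda_n$ survives.

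\textbf{Step 3: the combinatorial side.} Use the Forest Expansion (Theorem~\ref{thm:forest-expansion}) instead. A forest $F$ contributes to $z_0^1$ precisely when $n-|F|=1$, that is, when $F$ is a spanning tree. By the Compositional Lemma~\ref{lem:composition}, each spanning tree has weight $a(T)=|V(T)|=n$. The coefficient is therefore $n\tau(G)$; equivalently, this is the rooted-forest dictionary \eqref{eq:rooted-forest-dictionary} at $j=1$. This bijective description also gives the explicit construction: each spanning tree together with a choice of root yields a perfect matching, giving $n$ perfect matchings per tree.

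\textbf{Step 4: conclusion.} Equating the two evaluations gives $n\tau(G)=\lambda_2\cdots\lambda_n$, which is the Matrix--Tree theorem. Existence of perfect matchings follows because a connected graph has $\tau(G)\ge1$; alternatively, all $\lambda_i>0$ for $i\ge2$, as already used in Corollary~\ref{thm:graph-support}.

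The only point needing care is the Betti numbers themselves: $\beta_0=1$ by connectedness, and $\beta_1=m-n+1$ from the Euler characteristic $\chi(G)=n-m$. Beyond that I expect no real obstacle, since everything is a specialization of earlier results. The one subtlety is that the Matrix-Forest Theorem (Theorem~\ref{thm:MFT}) is an input, so the Matrix--Tree statement is recovered as a consequence of it rather than proved independently.
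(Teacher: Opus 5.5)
Your proposal is correct and follows essentially the same route as the paper. The paper likewise identifies perfect matchings with spanning-tree supports and counts $n$ orientations per tree via the Compositional Lemma to get $n\tau(G)$, then reads off $e_{n-1}=\lambda_2\cdots\lambda_n$ from Corollary~\ref{cor:sym-poly} at $j=1$. Your closing caveat is apt: the Laplacian Formula rests on the Matrix-Forest Theorem, which already contains the Matrix--Tree Theorem, so the ``recovery'' of Kirchhoff is a specialisation rather than an independent proof.
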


\begin{proof}
A matching achieves $c_0=1$, $c_1=m-n+1$ if and only if the
matched edges form a spanning tree of $G$ (so that exactly one
vertex and $m-n+1$ edges remain critical). By the Compositional
Lemma (Lemma~\ref{lem:composition}), each spanning tree $T$
admits exactly $n$ valid orientations (one per choice of escape
vertex). Thus the count is $n\cdot\tau(G)$.
The spectral identity $n\tau(G)=e_{n-1}(\lambda_1,\ldots,\lambda_n)
=\lambda_2\cdots\lambda_n$ then follows from
Corollary~\ref{cor:sym-poly} with $j=1$ (using $\lambda_1=0$).
\end{proof}

The following remark makes explicit the recovery of the Chari--Joswig graph
formula in the notation of the present paper.

\begin{remark}[Recovering the Chari--Joswig graph formula]
\label{cor:morse-complex-graph}
Let $G$ be a connected graph on $n$ vertices with Laplacian eigenvalues
$0=\lambda_1<\lambda_2\leq\cdots\leq\lambda_n$, and let
$\mathfrak M(G)$ be the Chari--Joswig Morse complex.  Its $(k-1)$-faces are
precisely the acyclic matchings of $P(G)$ with $k$ matched pairs.  Such a
matching has critical vector $(n-k,m-k)$, and hence
Corollary~\ref{cor:sym-poly} gives
\[
  f_{k-1}(\mathfrak M(G))
  =e_k(\lambda_2,\ldots,\lambda_n).
\]
Equivalently,
\[
  W_G(q):=\sum_{M\in\mathcal A(P(G))} q^{|M|}
  =\sum_{k\ge0} f_{k-1}(\mathfrak M(G))q^k
  =\prod_{i=2}^{n}(1+\lambda_i q).
\]
Thus Theorem~\ref{thm:laplacian} recovers the Chari--Joswig $f$-vector formula
for the graph Morse complex after the change of variables $q=z_0z_1$ and the
normalization factor $z_1^{m-n}$.
\end{remark}

The graph formula also gives explicit formulas for graphs
built by Cartesian products, since the Laplacian of $G_1\square G_2$
is $L_1\otimes I_{n_2}+I_{n_1}\otimes L_2$ with eigenvalues
$\{\lambda_i+\mu_j\}$.

\begin{corollary}[Cartesian Products]
\label{thm:cartesian}
Let $G_1$ and $G_2$ be connected graphs with Laplacian eigenvalues
$\{\lambda_i\}_{i=1}^{n_1}$ and $\{\mu_j\}_{j=1}^{n_2}$.
Set $n=n_1n_2$, $m=m_1n_2+m_2n_1$.
Then
\[
  \ME_{G_1\square G_2}(z_0,z_1)
  = z_1^{m-n}\prod_{i=1}^{n_1}\prod_{j=1}^{n_2}
    (z_0z_1+\lambda_i+\mu_j).
\]
\end{corollary}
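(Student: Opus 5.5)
The plan is to apply Theorem~\ref{thm:laplacian} to the connected graph $G_1\square G_2$. The only inputs needed are its vertex count, its edge count, and its Laplacian spectrum.

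First, connectivity: $G_1\square G_2$ is connected whenever $G_1$ and $G_2$ are. Given two vertices $(u,v)$ and $(u',v')$, move inside the fibre $G_1\times\{v\}$ from $u$ to $u'$, then inside the fibre $\{u'\}\times G_2$ from $v$ to $v'$.

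Second, the counts. The vertex set is $V_1\times V_2$, so $n=n_1n_2$. Each edge of $G_1$ appears once in every fibre $G_1\times\{v\}$ and each edge of $G_2$ once in every fibre $\{u\}\times G_2$, so $m=m_1n_2+m_2n_1$.

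Third, the spectrum, which is the one step requiring real care. In the lexicographic ordering of $V_1\times V_2$, the degree matrix is $D_1\otimes I_{n_2}+I_{n_1}\otimes D_2$ and the adjacency matrix is $A_1\otimes I_{n_2}+I_{n_1}\otimes A_2$. Hence
\[
L_{G_1\square G_2}=L_1\otimes I_{n_2}+I_{n_1}\otimes L_2 .
\]
Choose orthonormal eigenbases $\{x_i\}$ of $L_1$ and $\{y_j\}$ of $L_2$. The mixed-product rule gives $(L_1\otimes I+I\otimes L_2)(x_i\otimes y_j)=(\lambda_i+\mu_j)\,x_i\otimes y_j$. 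The $n_1n_2$ vectors $x_i\otimes y_j$ form an orthonormal basis of $\mathbb{R}^{n}$, so the spectrum is exactly the multiset $\{\lambda_i+\mu_j\}$, multiplicities included. Equivalently, $L_1\otimes I$ and $I\otimes L_2$ are commuting symmetric matrices and are simultaneously diagonalisable.

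Finally, substitute into $\ME_G=z_1^{m-n}\prod_k(z_0z_1+\nu_k)$, where $\nu_k$ runs over the spectrum of $G_1\square G_2$. This gives the stated double product.

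As a sanity check, $\lambda_1+\mu_1=0$ is the unique zero eigenvalue, which is consistent with connectedness. The factor $z_0z_1$ it contributes absorbs a negative power $z_1^{m-n}$ exactly as in the remark after Theorem~\ref{thm:laplacian}, so the expression is a genuine polynomial.
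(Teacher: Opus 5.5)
Your proof is correct and follows the paper's route: the paper justifies the corollary in one line by applying Theorem~\ref{thm:laplacian} to $G_1\square G_2$, whose Laplacian is $L_1\otimes I_{n_2}+I_{n_1}\otimes L_2$ with spectrum $\{\lambda_i+\mu_j\}$, and you do the same. You also supply the details the paper leaves implicit, namely connectivity of the product (needed to invoke the theorem), the vertex and edge counts, and the tensor-eigenbasis argument that gets the multiplicities right.
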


\begin{example}[Grid graphs and hypercubes]
\label{ex:cartesian}
The $m\times n$ grid $P_m\square P_n$ has eigenvalues
$\{(2-2\cos(k\pi/m))+(2-2\cos(\ell\pi/n))\}$.
The $n$-cube $Q_n=P_2^{\square n}$ has eigenvalues $\{2k\}$
with multiplicity $\binom{n}{k}$, giving
$|\cA(Q_n)|=\prod_{k=0}^n(1+2k)^{\binom{n}{k}}$, with values
$3, 45, 23625, 27348890625$ for $n=1,2,3,4$.
\end{example}

\subsection{Spectral identities and examples}

The Laplacian eigenvalues of $P_n$ and $C_n$ are
$\lambda_k^{P}=2-2\cos(k\pi/n)$ and $\lambda_k^{C}=2-2\cos(2\pi k/n)$.
Combining with $|\cA(G)|=\prod_i(1+\lambda_i)$ and
Theorem~\ref{thm:fibonacci} gives the spectral identities
as follows.
\begin{corollary}
$$F_{2n}=\prod_{k=1}^{n-1}(3-2\cos(k\pi/n))$$ 
and
$$L_{2n}-2=\prod_{k=1}^{n-1}(3-2\cos(2\pi k/n)).$$
\end{corollary}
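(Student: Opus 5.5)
The plan is to evaluate the graph formula of Theorem~\ref{thm:laplacian} at $z_0=z_1=1$ and compare with Theorem~\ref{thm:fibonacci}.

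\textbf{Step 1: the spectral count.} At $z_0=z_1=1$ the prefactor $z_1^{m-n}$ equals $1$. This holds even for trees, where $m-n=-1$, because $\ME_G$ is a genuine polynomial and the identity $\ME_G=z_1^{m-n}\det(z_0z_1I_n+L_G)$ holds as rational functions. We therefore get \eqref{eq:spectral-count}:
\[
|\cA(G)|=\prod_{i=1}^n(1+\lambda_i).
\]

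\textbf{Step 2: the path.} The Laplacian spectrum of $P_n$ is $\lambda_k=2-2\cos(k\pi/n)$ for $k=0,\ldots,n-1$. One can see this from the cosine eigenvectors $x_j=\cos\bigl(k\pi(2j-1)/(2n)\bigr)$, which satisfy the Neumann-type boundary conditions at the two endpoints. The $k=0$ eigenvalue is $0$ and contributes the factor $1+0=1$. Each remaining factor is $1+\lambda_k=3-2\cos(k\pi/n)$, so
\[
|\cA(P_n)|=\prod_{k=1}^{n-1}\bigl(3-2\cos(k\pi/n)\bigr).
\]
Theorem~\ref{thm:fibonacci} identifies the left side with $F_{2n}$.

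\textbf{Step 3: the cycle.} For $n\ge 3$, $C_n$ is a simple connected graph. This is why the hypothesis $n\ge 3$ is needed: it makes $C_n$ a simplicial complex, as in Theorem~\ref{thm:fibonacci}. As a circulant graph, its Laplacian spectrum is $2-2\cos(2\pi k/n)$ for $k=0,\ldots,n-1$. Again the $k=0$ term contributes $1$, so
\[
|\cA(C_n)|=\prod_{k=1}^{n-1}\bigl(3-2\cos(2\pi k/n)\bigr)=L_{2n}-2.
\]

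\textbf{Main obstacle.} There is no real obstacle; the only care needed is bookkeeping. One must index the spectra correctly, including the zero eigenvalue exactly once, and check that the hypotheses of Theorem~\ref{thm:laplacian} (connectedness) and Theorem~\ref{thm:fibonacci} ($n\ge1$ for paths, $n\ge3$ for cycles) are met.

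\textbf{Independent check.} As a sanity check not relying on Theorem~\ref{thm:fibonacci}, I would also verify the path identity via Chebyshev polynomials. One has
\[
\prod_{k=1}^{n-1}\bigl(x-2\cos(k\pi/n)\bigr)=U_{n-1}(x/2).
\]
The sequence $U_{n-1}(3/2)$ satisfies $a_{n+1}=3a_n-a_{n-1}$ with $a_1=1$ and $a_2=3$, which is the recurrence of $F_{2n}$. This gives a second derivation of the first identity.
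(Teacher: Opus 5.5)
Your proposal is correct and follows the same route as the paper. You specialize the graph formula to the spectral count $|\cA(G)|=\prod_i(1+\lambda_i)$, insert the known Laplacian spectra of $P_n$ and $C_n$ (the zero eigenvalue contributing a factor $1$), and invoke Theorem~\ref{thm:fibonacci}. Your Chebyshev check via $U_{n-1}(3/2)$ independently confirms the path identity, which the paper does not do; separately, the cycle identity for $n=1,2$, not covered by the $C_n$ argument, holds by direct evaluation ($1=L_2-2$, $5=L_4-2$).
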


For the complete graphs $K_n$, the eigenvalues are $0$ and $n$ (with multiplicity $n-1$),
recovering the classical count of rooted
spanning forests~\cite{Pitman2002}.
\begin{corollary}
$$|\cA(K_n)|=(n+1)^{n-1}.$$
\end{corollary}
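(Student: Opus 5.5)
We prove $|\cA(K_n)|=(n+1)^{n-1}$ by specialising the spectral count \eqref{eq:spectral-count}, $|\cA(G)|=\det(I_n+L_G)=\prod_{i=1}^n(1+\lambda_i(G))$, which holds for any connected graph by Theorem~\ref{thm:laplacian} at $z_0=z_1=1$. This needs only two ingredients: the Laplacian spectrum of $K_n$ and one line of arithmetic.

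\textbf{Step 1: the spectrum of $K_n$.} Write $L_{K_n}=nI_n-J_n$, where $J_n$ is the all-ones matrix. The eigenvalues of $J_n$ are $n$ on the all-ones vector and $0$ on its orthogonal complement, which has dimension $n-1$. Hence $L_{K_n}$ has eigenvalue $0$ with multiplicity one and eigenvalue $n$ with multiplicity $n-1$.

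\textbf{Step 2: evaluate the product.} Substituting into \eqref{eq:spectral-count} gives
\[
  |\cA(K_n)|=(1+0)\,(1+n)^{n-1}=(n+1)^{n-1}.
\]
For the degenerate case $n=1$ there are no edges, only the empty matching exists, and $(1+1)^0=1$ agrees.

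\textbf{Combinatorial cross-check.} By the Forest Expansion (Theorem~\ref{thm:forest-expansion}) together with the Compositional Lemma, $|\cA(K_n)|$ also counts rooted spanning forests of $K_n$. Adjoining an extra vertex $0$ and joining it to every root gives a bijection between rooted forests on $[n]$ and spanning trees of $K_{n+1}$. Cayley's formula then gives $(n+1)^{n-1}$ again, matching the classical count cited from \cite{Pitman2002}.

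There is no real obstacle. The only point to check is that the multiplicity of the eigenvalue $n$ is exactly $n-1$, and Step 1 settles this directly.
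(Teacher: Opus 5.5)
Your proof is correct and follows the paper's route: you specialize the spectral count $|\cA(G)|=\prod_i(1+\lambda_i)$ to the spectrum of $K_n$ (eigenvalue $0$ once, $n$ with multiplicity $n-1$), which you justify cleanly via $L_{K_n}=nI_n-J_n$. Your rooted-forest bijection with spanning trees of $K_{n+1}$, combined with Cayley's formula, is also valid; it is the classical rooted-forest count that the paper cites as agreeing with this result.
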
 

Table~\ref{tab:examples} summarises $\ME_G$ for fundamental graphs.

\begin{table}[ht]
\centering
\begin{tabular}{llcc}
\toprule
$G$ & $\ME_G(z_0,z_1)$ & $|\cA(G)|$ & Formula \\
\midrule
$P_3$ & $3z_0+4z_0^2z_1+z_0^3z_1^2$ & $8$ & $F_6$ \\
$P_4$ & $4z_0+10z_0^2z_1+6z_0^3z_1^2+z_0^4z_1^3$ & $21$ & $F_8$ \\
$C_3$ & $9z_0z_1+6z_0^2z_1^2+z_0^3z_1^3$ & $16$ & $L_6-2$ \\
$C_4$ & $16z_0z_1+20z_0^2z_1^2+8z_0^3z_1^3+z_0^4z_1^4$ & $45$ & $L_8-2$ \\
$K_{1,3}$ & $4z_0+9z_0^2z_1+6z_0^3z_1^2+z_0^4z_1^3$ & $20$ & \\
$K_4$ & $64z_0z_1^3+48z_0^2z_1^4+12z_0^3z_1^5+z_0^4z_1^6$
      & $125$ & $(1+4)^3$ \\
\bottomrule
\end{tabular}
\caption{Morse ensemble polynomials satisfying
$\ME_G=z_1^{m-n}\det(z_0z_1\,I+L_G)$.
Totals verified via $|\cA(G)|=\det(I+L_G)$.}
\label{tab:examples}
\end{table}
These examples illustrate the agreement between combinatorial counts
of acyclic matchings and spectral formulas derived from the Laplacian.

\subsection{Comparison with the Tutte polynomial}
\label{sec:tutte}

$\ME_G$ is a Laplacian-spectral invariant, while $T_G$ is a
cycle-matroid invariant. The following explicit witnesses show that
neither invariant determines the other.

\begin{proposition}[Incomparability]
\label{thm:incomparable}
$\ME_G$ and $T_G$ are incomparable invariants. That is, neither determines the other.
\end{proposition}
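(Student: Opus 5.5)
Incomparability needs two witnesses: a pair of graphs with equal $\ME$ but different Tutte polynomials, and a pair with equal Tutte polynomials but different $\ME$.

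\emph{$\ME_G$ does not determine $T_G$.} Example~\ref{ex:cospectral} already supplies this pair. The graphs $G_1$ and $G_2$ are connected and Laplacian-cospectral, so Theorem~\ref{cor:cospectral} gives $\ME_{G_1}=\ME_{G_2}$. Their Tutte polynomials, listed there, differ. One can also see this without the listed polynomials. $T_G(1,2)$, or equivalently the coefficient structure, counts spanning subgraphs by nullity. The triangle $\{0,2,3\}$ in $G_1$ means $G_1$ has a $3$-edge cycle, whereas the girth of $G_2$ is $4$. The girth is a matroid invariant, since it is the minimum circuit size, and so it is read off from $T_G$. Hence $T_{G_1}\neq T_{G_2}$.

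\emph{$T_G$ does not determine $\ME_G$.} Here I would use trees. Every tree on $n$ vertices has $T=x^{n-1}$, because its cycle matroid is free. By Theorem~\ref{thm:laplacian}, $\ME_T=z_1^{-1}\det(z_0z_1I+L_T)$, and this depends on the Laplacian spectrum. By Corollary~\ref{cor:sym-poly}, it suffices to find two trees on the same number of vertices whose elementary symmetric functions of the Laplacian eigenvalues differ. Equation~\eqref{eq:two-pair-coefficient} gives the cleanest invariant for this. For any two trees on $n$ vertices, $m=n-1$ is the same, so the coefficient $[z_0^{n-2}z_1^{m-2}]\ME_T$ differs exactly when $\sum_v\binom{\deg v}{2}$ differs. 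Table~\ref{tab:examples} already exhibits this for $n=4$: $P_4$ has degree sequence $(2,2,1,1)$ with $\sum\binom{\deg}{2}=2$, and $K_{1,3}$ has $(3,1,1,1)$ with $\sum\binom{\deg}{2}=3$. The tabulated coefficients are $10$ and $9$ respectively, so $\ME_{P_4}\neq\ME_{K_{1,3}}$, while $T_{P_4}=T_{K_{1,3}}=x^3$.

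Combining the two witnesses proves that neither invariant determines the other. I expect no real obstacle. The only point that needs care is the first direction, namely that the Tutte polynomials in Example~\ref{ex:cospectral} are genuinely distinct. The girth argument gives a computation-free certificate for this, so I do not have to rely on the displayed expansions alone.
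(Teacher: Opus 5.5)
Your proof is correct and uses the same two witnesses as the paper. These are the trees $P_4$ and $K_{1,3}$, which share the Tutte polynomial $x^3$ but have different $\ME$ (you certify this via the two-pair coefficients $10\neq 9$), and the Laplacian-cospectral pair of Example~\ref{ex:cospectral}, whose Tutte polynomials differ. One small correction to your optional girth certificate: being a matroid invariant does not by itself make a quantity determined by $T_G$, and $T_G(1,2)$ alone only counts connected spanning subgraphs. The right justification is that $T_G$ determines the number of edge subsets of each size and rank, and hence the least size of a subset with positive nullity, which is the girth.
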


\begin{proof}
We give explicit witnesses in both directions. First, all trees on
$n$ vertices have the same Tutte polynomial $x^{n-1}$. However, the
path $P_4$ and the star $K_{1,3}$ have different Laplacian spectra,
so Theorem~\ref{thm:laplacian} gives $\ME_{P_4}\neq\ME_{K_{1,3}}$.
Thus $T_G$ does not determine $\ME_G$.

Conversely, the Laplacian-cospectral pair of Example~\ref{ex:cospectral}
has $\ME_{G_1}=\ME_{G_2}$ by Theorem~\ref{cor:cospectral}, but the
Tutte polynomials displayed there are different. Thus $\ME_G$ does not
determine $T_G$.
\end{proof}

\section{Top-Face Recursion and Correction Terms}
\label{sec:separation}

For a simplicial complex $K$ of dimension at least two, the main obstruction
to a deletion--contraction formula is liftability: after one fixes a new
matched pair $(\tau,\sigma)$, an acyclic matching on the remaining face poset
may become cyclic after the pair is inserted.  Since every finite simplicial
complex can be obtained by successively adjoining simplices whose boundaries
are already present, the following Top-Face Recursion gives a universal
one-step recursion for $\ME_K$, with the exact lift obstruction encoded by
$F(K,\sigma,\tau)$.

Equivalently, from the viewpoint of the Chari--Joswig complex, the recursion is
a weighted star decomposition with respect to the new matching vertices
$(\tau,\sigma)$, where $\tau\prec\sigma$.  An acyclic matching on $K'$ either
leaves $\sigma$ critical, contributing $z_d\ME_K$, or uses exactly one such
new vertex.  For a fixed facet $\tau$, the corresponding term counts those
matchings on $Q_{\sigma,\tau}$ that remain acyclic after adjoining the birth
pair $(\tau,\sigma)$; $F(K,\sigma,\tau)$ counts precisely the failures, i.e.
the matchings for which this birth step creates a closed gradient path.

\subsection{The Top-Face Recursion}
\label{subsec:top-face-recursion}

For any finite graded sub-poset $Q$ of a face poset, we use $\ME_Q$ for the
analogous critical-vector generating function over acyclic matchings on $Q$,
with $c_i^Q(M)$ denoting the number of unmatched rank-$i$ elements.  When the
ambient poset is clear from the context we drop the superscript and write
$c_i(M)$.

\begin{definition}[Liftable contraction polynomial]
\label{def:contracted-ME}
Let $K$ be a finite simplicial complex, let $\sigma$ be a
$d$-simplex with $\sigma\notin K$ and $\partial\sigma\subset K$, and set
\[
  K' = K\cup\{\sigma\}.
\]
For a facet $\tau\prec\sigma$, define the graded sub-poset
\[
  Q_{\sigma,\tau}:=P(K')\setminus\{\sigma,\tau\}.
\]
Call an acyclic matching $M'\in\cA(Q_{\sigma,\tau})$ \emph{liftable} if
$M'\cup\{(\tau,\sigma)\}\in\cA(P(K'))$, and write
\[
  \cA^{\mathrm{lift}}_{\sigma,\tau}
  :=\{\,M'\in\cA(Q_{\sigma,\tau}) : M'\cup\{(\tau,\sigma)\}\in\cA(P(K'))\,\}
\]
for the set of such matchings.  The \emph{liftable contraction polynomial} is
\[
  L_{\sigma,\tau}
  :=
  \sum_{M'\in\cA^{\mathrm{lift}}_{\sigma,\tau}}
  \prod_{i\geq 0} z_i^{c_i(M')}.
\]
Equivalently, $L_{\sigma,\tau}$ is the part of
$\ME_{Q_{\sigma,\tau}}$ contributed by those acyclic matchings on
$Q_{\sigma,\tau}$ whose lift, obtained by adjoining the matched pair
$(\tau,\sigma)$, remains acyclic on $P(K')$.  The fixed pair
$(\tau,\sigma)$ contributes no variable factor, since both $\tau$ and
$\sigma$ are matched rather than critical.
\end{definition}

\begin{theorem}[Top-Face Recursion]
\label{thm:top-face}
Let $K$ be a finite simplicial complex, and let $\sigma$ be a
$d$-simplex with $\sigma\notin K$ and $\partial\sigma\subset K$.
Set $K'=K\cup\{\sigma\}$.  Then
\[
  \ME_{K'}
  =
  z_d\cdot\ME_K
  +
  \sum_{\tau\prec\sigma}
  L_{\sigma,\tau}.
\]
\end{theorem}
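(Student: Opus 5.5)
The plan is to partition $\cA(K')=\cA(P(K'))$ according to the status of the new top simplex $\sigma$, exactly as in the proof of the Bridge Recursion (Theorem~\ref{thm:recursion}). Since $\sigma$ has dimension $d$ and is not a face of any simplex of $K'$, it can only be matched downward, with one of its facets $\tau\prec\sigma$. So every $M\in\cA(K')$ falls into exactly one of the disjoint classes: (a) $\sigma$ critical, or (b) $(\tau,\sigma)\in M$ for a unique facet $\tau\prec\sigma$. I will compute the contribution of each class to the generating function separately and add.

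For class (a), I will show that restriction $M\mapsto M$ is a bijection between matchings in $\cA(K')$ with $\sigma$ unmatched and $\cA(K)$. A matching on $P(K')$ not involving $\sigma$ is a set of covering pairs of $P(K)$, because $P(K)$ is the subposet $P(K')\setminus\{\sigma\}$ with the induced covering relations. The point to check is that acyclicity is the same in both posets. A closed gradient path $\sigma_0,\tau_0,\sigma_1,\ldots$ only visits matched elements $\tau_i$ together with their facets $\sigma_{i+1}$. Since $\sigma$ is unmatched and is maximal, it is never a facet of anything, so $\sigma$ cannot occur on such a path. Hence closed gradient paths in $P(K')$ and in $P(K)$ coincide. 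Critical counts agree except that $\sigma$ contributes an extra critical $d$-cell, which gives $z_d\ME_K$.

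For class (b), fix $\tau$. The map $M\mapsto M':=M\setminus\{(\tau,\sigma)\}$ sends $M$ to a matching on $Q_{\sigma,\tau}=P(K')\setminus\{\sigma,\tau\}$. I will check three things:
\begin{itemize}
\item $M'$ is acyclic on $Q_{\sigma,\tau}$, because any closed gradient path of $M'$ in $Q_{\sigma,\tau}$ is also one of $M$ in $P(K')$ (the covering relations of $Q_{\sigma,\tau}$ are induced from $P(K')$).
\item $M'$ is liftable by construction, since $M'\cup\{(\tau,\sigma)\}=M\in\cA(P(K'))$.
\item The inverse map $M'\mapsto M'\cup\{(\tau,\sigma)\}$ on $\cA^{\mathrm{lift}}_{\sigma,\tau}$ is well defined: the union is a matching because $M'$ avoids $\tau$ and $\sigma$, and it is acyclic by the definition of liftability.
\end{itemize}
So class (b) for this $\tau$ is in bijection with $\cA^{\mathrm{lift}}_{\sigma,\tau}$. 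The critical cells of $M$ are exactly those of $M'$, ranked by dimension, since $\tau$ and $\sigma$ are matched in $M$. Therefore the contribution is precisely $L_{\sigma,\tau}$ with no extra factor. Summing over $\tau$ and adding class (a) yields the stated identity.

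The only delicate step is the acyclicity transfer in class (a), namely that adding an unmatched maximal element cannot create or destroy cycles; I expect this to be routine once closed gradient paths are written in terms of matched pairs. Class (b) needs no such argument, because liftability is built into the definition of $L_{\sigma,\tau}$. All genuine difficulty is deferred to the correction term $F(K,\sigma,\tau)$, which this recursion does not need to identify.
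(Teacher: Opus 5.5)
Your proposal is correct and follows the paper's proof: you partition $\cA(P(K'))$ by whether the maximal simplex $\sigma$ is critical (giving $z_d\ME_K$) or matched to a unique facet $\tau$ (giving $L_{\sigma,\tau}$, with liftability built into the definition). Your explicit verification that an unmatched maximal $\sigma$ cannot lie on a closed gradient path, and that removing the pair $(\tau,\sigma)$ yields a bijection with $\cA^{\mathrm{lift}}_{\sigma,\tau}$, supplies details the paper's proof states without comment.
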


\begin{proof}
We partition $\cA(P(K'))$ according to the status of the newly attached
simplex $\sigma$.

First suppose that $\sigma$ is critical.  Then no matched pair uses
$\sigma$.  Since $\sigma$ is maximal in $P(K')$, the remaining matching
is precisely an acyclic matching on $P(K)$.  The simplex $\sigma$
contributes one critical $d$-cell, hence this case contributes
\[
  z_d\cdot\ME_K.
\]

Now suppose that $\sigma$ is matched.  Since $\sigma$ is maximal, it can
only be matched with a facet $\tau\prec\sigma$.  After fixing the pair
$(\tau,\sigma)$, the remaining matching is an acyclic matching $M'$ on
\[
  Q_{\sigma,\tau}=P(K')\setminus\{\sigma,\tau\}.
\]
Conversely, such an $M'$ gives an acyclic matching on $P(K')$ with
$\sigma$ matched to $\tau$ precisely when the lift
\[
  M'\cup\{(\tau,\sigma)\}
\]
is acyclic on $P(K')$.  These are exactly the matchings counted by
$L_{\sigma,\tau}$.  The cases are disjoint and
exhaust all acyclic matchings on $P(K')$, proving the formula.
\end{proof}

\begin{definition}[Non-liftable correction]
\label{def:F}
In the notation of Definition~\ref{def:contracted-ME}, define
\[
  F(K,\sigma,\tau)
  :=
  \ME_{Q_{\sigma,\tau}}
  -
  L_{\sigma,\tau}.
\]
Thus $F(K,\sigma,\tau)$ is the generating function of acyclic matchings
on $Q_{\sigma,\tau}$ whose lift by the pair $(\tau,\sigma)$ is not
acyclic on $P(K')$.  Equivalently,
\[
  L_{\sigma,\tau}
  =
  \ME_{Q_{\sigma,\tau}}-F(K,\sigma,\tau).
\]
The correction term is not an auxiliary error term in an approximation; it is
the exact generating function of the lift obstruction.
\end{definition}

\subsection{The non-liftable correction term}
\label{subsec:correction-term}

\begin{definition}[$d$-incidence graph]
\label{def:top-incidence}
Let $K$ be a finite simplicial complex and fix $d\geq 1$.
The \emph{$d$-incidence graph} $\Gamma_d(K)$ is the bipartite graph
whose vertices are the $(d-1)$-simplices and the $d$-simplices of $K$,
with an edge $\rho-\eta$ whenever $\rho\prec\eta$.

In the setting where $K'=K\cup\{\sigma\}$, with $\sigma$ a
$d$-simplex and $\tau\prec\sigma$, define
\[
  S_{\sigma,\tau}
  :=
  \{\rho\prec\sigma:\rho\neq\tau\},
  \qquad
  C_{K,\tau}
  :=
  \{\eta\in K_d:\tau\prec\eta\}.
\]
Thus $S_{\sigma,\tau}$ consists of the other facets of the new simplex
$\sigma$, while $C_{K,\tau}$ consists of the old $d$-simplices of $K$
incident to the deleted facet $\tau$.
\end{definition}

\begin{lemma}[Incidence-separation criterion]
\label{thm:incidence-separation}
With the notation above,
\[
  F(K,\sigma,\tau)=0
\]
if and only if $S_{\sigma,\tau}$ and $C_{K,\tau}$ lie in distinct
connected components of the punctured incidence graph
\[
  \Gamma_d(K)\setminus\{\tau\}.
\]
Equivalently, the lift obstruction is present precisely when there is
an incidence path in $\Gamma_d(K)\setminus\{\tau\}$ from another facet
of $\sigma$ to an old $d$-simplex containing $\tau$.
\end{lemma}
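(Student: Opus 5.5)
The plan is to reduce liftability to a path condition in the $(d-1,d)$-level of the Hasse diagram, and then to prove the two directions separately. Throughout, $M'\in\cA(Q_{\sigma,\tau})$ and $\widehat M:=M'\cup\{(\tau,\sigma)\}$. Because $F(K,\sigma,\tau)$ has nonnegative coefficients, $F=0$ holds exactly when every such $M'$ is liftable. So the statement is equivalent to: some $M'$ is non-liftable if and only if $\Gamma_d(K)\setminus\{\tau\}$ contains a path from some $\rho\in S_{\sigma,\tau}$ to some $\eta\in C_{K,\tau}$.

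\emph{Step 1 (localising a bad cycle).} Suppose $\widehat M$ has a closed gradient path $\sigma_0,\tau_0,\sigma_1,\tau_1,\ldots,\sigma_r=\sigma_0$. All $\sigma_i$ have the same dimension, so all pairs in the cycle are $(i,i+1)$-pairs for one fixed $i$. The cycle must use the pair $(\tau,\sigma)$. Otherwise every element on it is matched by $M'$: the element $\tau$ is matched in $\widehat M$ only to $\sigma$, and $\sigma$ is maximal, so neither can occur. The cycle would then be a closed gradient path of $M'$ in $Q_{\sigma,\tau}$, contradicting acyclicity of $M'$. Hence $i=d-1$ and, after rotating, the cycle reads
\[
\tau,\ \sigma,\ \rho_0,\ \eta_1,\ \rho_1,\ \eta_2,\ \ldots,\ \rho_{k-1},\ \eta_k,\ \tau ,
\]
where:
\begin{itemize}
\item $\rho_0\prec\sigma$ with $\rho_0\neq\tau$;
\item $(\rho_{j-1},\eta_j)\in M'$;
\item $\rho_j\prec\eta_j$, and the path finally returns with $\tau\prec\eta_k$.
\end{itemize}
Each $\eta_j\neq\sigma$, so each $\eta_j$ is a $d$-simplex of $K$, and every $\rho_j$ differs from $\tau$ because $\tau$ is matched only with $\sigma$. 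Therefore $\rho_0,\eta_1,\rho_1,\ldots,\eta_k$ is a walk in $\Gamma_d(K)\setminus\{\tau\}$ from $\rho_0\in S_{\sigma,\tau}$ to $\eta_k\in C_{K,\tau}$. This proves the direction: if the two sets lie in distinct components, then every $M'$ is liftable and $F=0$.

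\emph{Step 2 (constructing an obstruction).} Conversely, assume such a path exists. I would choose a \emph{shortest} path $\rho_0,\eta_1,\rho_1,\ldots,\rho_{k-1},\eta_k$ in $\Gamma_d(K)\setminus\{\tau\}$ from $S_{\sigma,\tau}$ to $C_{K,\tau}$. Minimality over both endpoints and over length gives two facts:
\begin{itemize}
\item all vertices on the path are distinct;
\item $\eta_j$ is adjacent to no $\rho_l$ other than $\rho_{j-1}$ and $\rho_j$, since any extra adjacency would give a shortcut.
\end{itemize}
Now set $M':=\{(\rho_{j-1},\eta_j):1\le j\le k\}$. This is a matching on $Q_{\sigma,\tau}$, since all of its elements are distinct and differ from $\sigma$ and $\tau$.

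\emph{Step 3 (checking the construction).} First, $M'$ is acyclic. A closed gradient path of $M'$ can only step from $\eta_j$ down to a matched face $\rho_l$ with $l\neq j-1$. By the adjacency fact, the only such face is $\rho_j$, so the indices strictly increase along any gradient path and no cycle can close. Second, the lift $\widehat M$ contains the closed path $\tau,\sigma,\rho_0,\eta_1,\ldots,\rho_{k-1},\eta_k,\tau$, so $\widehat M$ is not acyclic. This $M'$ therefore contributes a monomial with coefficient $+1$ to $F$, and since there are no cancellations, $F\neq 0$.

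The main obstacle is the acyclicity check in Step 3. It is exactly there that choosing a shortest path, minimised over both endpoint sets and not merely over the length between fixed endpoints, is essential. I would also double-check Step 1's claim that a cycle avoiding $(\tau,\sigma)$ cannot pass through $\tau$ as a non-matched element, which uses the convention that every $\sigma_i$ on a closed gradient path is matched.
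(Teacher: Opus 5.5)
Your proof is correct and follows the paper's argument essentially step for step. In one direction you project a closed gradient path of $M'\cup\{(\tau,\sigma)\}$, which must pass through the new pair, onto an incidence path in $\Gamma_d(K)\setminus\{\tau\}$. In the other you build the forced matching along a shortest obstruction path, use chordlessness to get acyclicity on $Q_{\sigma,\tau}$, and close the cycle $\tau,\sigma,\rho_0,\eta_1,\ldots,\eta_k,\tau$.

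One small correction to your closing remark: minimising over both endpoint sets is not essential for the acyclicity check, since a shortest path between two fixed endpoints is already an induced path, though taking the global minimum does no harm.
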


\begin{proof}
Write $Q:=P(K')\setminus\{\sigma,\tau\}$; a non-liftable matching is an acyclic
matching $M'$ on $Q$ for which $M'\cup\{(\tau,\sigma)\}$ contains a closed 
gradient path.  We show such an $M'$ exists if and only if $S_{\sigma,\tau}$ and
$C_{K,\tau}$ are joined by a path in $\Gamma_d(K)\setminus\{\tau\}$.

\emph{Necessity.}
Let $M'$ be non-liftable and $\gamma$ a closed gradient path of
$M'\cup\{(\tau,\sigma)\}$.  As $M'$ is acyclic, $\gamma$ uses the new pair
$(\tau,\sigma)$.  After traversing it, $\gamma$ leaves $\sigma$ through a facet
$\rho\prec\sigma$ with $\rho\neq\tau$, so $\rho\in S_{\sigma,\tau}$; it then runs
inside $Q$, and since $\tau\notin Q$ it cannot meet $\tau$ until the closing
step, so immediately beforehand it passes through an old $d$-simplex $\eta$ with
$\tau\prec\eta$, i.e.\ $\eta\in C_{K,\tau}$.  Forgetting orientations and keeping
only the top-dimensional incidences, the part of $\gamma$ from $\rho$ to $\eta$
is an incidence path in $\Gamma_d(K)\setminus\{\tau\}$ from $S_{\sigma,\tau}$ to
$C_{K,\tau}$.

\emph{Sufficiency.}
Conversely, choose a shortest such path,
\[
  \pi:\ \rho_0-\eta_1-\rho_1-\eta_2-\cdots-\rho_{r-1}-\eta_r,
  \qquad \rho_0\in S_{\sigma,\tau},\ \ \eta_r\in C_{K,\tau}.
\]
Its vertices lie in $Q$, so
$M_\pi:=\{(\rho_0,\eta_1),(\rho_1,\eta_2),\ldots,(\rho_{r-1},\eta_r)\}$
is a matching on $Q$.  We claim $M_\pi\in\cA(Q)$.  Being shortest, $\pi$ is a
simple induced path: its vertices are distinct, and the facets among
$\rho_0,\ldots,\rho_{r-1}$ contained in $\eta_i$ are exactly its neighbours
$\rho_{i-1}$ and $\rho_i$ (a further incidence $\rho_j\prec\eta_i$ with
$j\notin\{i-1,i\}$ would give a chord shortening $\pi$).  In any closed gradient path
of $M_\pi$, every lower cell is a matched facet, hence one of
$\rho_0,\ldots,\rho_{r-1}$, and every upper cell is one of $\eta_1,\ldots,\eta_r$.
After the matched step $\rho_{i-1}\to\eta_i$, the only matched facet of $\eta_i$
available for descent is $\rho_i$, which ascends to $\eta_{i+1}$; thus a gradient path
can only advance forward along $\pi$.  Since $\pi$ is simple and $\eta_r$ has no
further matched facet, no gradient path closes up, so $M_\pi$ is acyclic.

Finally, adjoining $(\tau,\sigma)$ yields the closed gradient path, written in the
standard Forman order,
\[
  \tau,\ \sigma,\ \rho_0,\ \eta_1,\ \rho_1,\ \ldots,\ \rho_{r-1},\ \eta_r,\ \tau,
\]
where the last step uses $\tau\prec\eta_r$.  Hence
$M_\pi\cup\{(\tau,\sigma)\}$ is not acyclic, $M_\pi$ is non-liftable, and
$F(K,\sigma,\tau)\neq 0$.
\end{proof}

\begin{corollary}[Obstruction-free attachments]
\label{cor:obstruction-free}
If $C_{K,\tau}=\varnothing$, or more generally if
$S_{\sigma,\tau}$ and $C_{K,\tau}$ are separated in
$\Gamma_d(K)\setminus\{\tau\}$, then
\[
  L_{\sigma,\tau}
  =
  \ME_{P(K')\setminus\{\sigma,\tau\}}.
\]
In particular, this holds whenever $\dim K<d$.  For $d=1$, the
criterion reduces to the usual bridge condition for the new edge
$\sigma$.
\end{corollary}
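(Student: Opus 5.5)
The corollary has three assertions: the general separated case, the special case $\dim K<d$, and the reduction to the bridge condition when $d=1$. All three should follow from Lemma~\ref{thm:incidence-separation} together with Definition~\ref{def:F}.

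\emph{Separated case.} If $S_{\sigma,\tau}$ and $C_{K,\tau}$ lie in distinct components of $\Gamma_d(K)\setminus\{\tau\}$, the lemma gives $F(K,\sigma,\tau)=0$. Substituting into $L_{\sigma,\tau}=\ME_{Q_{\sigma,\tau}}-F(K,\sigma,\tau)$ yields the claimed identity, since $Q_{\sigma,\tau}=P(K')\setminus\{\sigma,\tau\}$.

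\emph{Case $C_{K,\tau}=\varnothing$.} Here there is no target vertex, so no incidence path from $S_{\sigma,\tau}$ to $C_{K,\tau}$ can exist, and the separation hypothesis holds vacuously. As a cross-check independent of the lemma, I would argue directly. A closed gradient path through $(\tau,\sigma)$ must return to $\tau$ by descending from some $d$-simplex $\eta\neq\sigma$ with $\tau\prec\eta$. Such an $\eta$ would lie in $C_{K,\tau}$, so no such path exists and every $M'$ is liftable.

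\emph{Case $\dim K<d$.} Then $K_d=\varnothing$, so $C_{K,\tau}=\varnothing$ and the previous case applies.

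\emph{The bridge reduction for $d=1$.} Let $\sigma=\{u,v\}$ be the new edge and take $\tau=u$, so $S_{\sigma,\tau}=\{v\}$ and $C_{K,u}$ is the set of old edges at $u$. In $\Gamma_1(K)$, which is the subdivision of the graph $K$, a path avoiding $u$ from $v$ to an edge $\eta\ni u$ is exactly a walk from $v$ to the other endpoint of $\eta$ that avoids $u$. Appending $\eta$ produces a $v$–$u$ path in $K$.

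\begin{itemize}
\item If $\eta=\{v,u\}$ itself, the path is trivial. This would require a parallel edge, which is impossible because $\sigma\notin K$.
\item Otherwise, separation holds iff $u$ and $v$ lie in different components of $K$, which is exactly the statement that $\sigma$ is a bridge of $K'$.
\end{itemize}

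The case $\tau=v$ is symmetric, and this matches the liftability argument in Theorem~\ref{thm:recursion}.

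No step is a real obstacle. The only care needed is in the $d=1$ translation: checking that removing the vertex $\tau=u$ from the subdivided graph corresponds to paths in $K$ avoiding $u$, and confirming that the endpoint case behaves correctly.
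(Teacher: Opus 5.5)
Your proposal is correct and follows the same route as the paper. The paper gives no separate proof; the corollary is meant as an immediate consequence of Lemma~\ref{thm:incidence-separation}: separation gives $F(K,\sigma,\tau)=0$, which you substitute into $L_{\sigma,\tau}=\ME_{Q_{\sigma,\tau}}-F(K,\sigma,\tau)$. The case $\dim K<d$ forces $C_{K,\tau}=\varnothing$, and for $d=1$ separation in the subdivided graph $\Gamma_1(K)\setminus\{u\}$ means $u$ and $v$ lie in different components of $K$. Your direct check for $C_{K,\tau}=\varnothing$ and your explicit $d=1$ translation are sound details that the paper leaves unstated.
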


\subsection{The incidence-forest regime and correction-free recursion}
\label{subsec:berge-acyclic-recursion}

Lemma~\ref{thm:incidence-separation} shows that non-liftability is caused by
closed codimension-one incidence patterns.  We now isolate the
\emph{incidence-forest} regime, where such patterns are absent and the
Top-Face Recursion becomes correction-free.  The key point is that this regime
is equivalent to the existence of a codimension-one leaf order.

\begin{definition}[Internal top incidence graph]
\label{def:internal-incidence-graph}
Let $K$ be a finite pure $d$-dimensional simplicial complex.  We call a
$(d-1)$-simplex \textit{internal} if it is contained in at least two
$d$-simplices of $K$, and \textit{boundary} otherwise.  The \textit{internal top
incidence graph}
$\Gamma_d^{\mathrm{int}}(K)$ is the subgraph of $\Gamma_d(K)$ induced by all
$d$-simplices and all internal $(d-1)$-simplices.
\end{definition}

The boundary facets dropped here are leaf vertices of $\Gamma_d(K)$, so deleting
them cannot create or destroy a cycle; hence $\Gamma_d(K)$ is a forest if and
only if $\Gamma_d^{\mathrm{int}}(K)$ is.  Equivalently,
$\Gamma_d^{\mathrm{int}}(K)$ is the incidence graph of the hypergraph whose
vertices are $d$-simplices and whose hyperedges are internal facets; this is also known as 
Berge-acyclicity~\cite{Berge1989}.

\begin{definition}[Codimension-one leaf order]
\label{def:codim-one-leaf-order}
Let $K$ be a finite pure $d$-dimensional simplicial complex.  An ordering
$\sigma_1,\ldots,\sigma_N$ of the $d$-simplices of $K$ is a
\textit{codimension-one leaf order} if, with
\[
  B_j:=K^{(d-1)}\cup\{\sigma_1,\ldots,\sigma_j\},
  \qquad B_0:=K^{(d-1)},
\]
each $\sigma_j$ has at most one facet contained in a $d$-simplex of $B_{j-1}$.
Here $K^{(d-1)}$ denotes the $(d-1)$-skeleton, and
$B_0\subset B_1\subset\cdots\subset B_N=K$ is the filtration of $K$ by the
attached $d$-simplices.
\end{definition}

\begin{proposition}[Existence of codimension-one leaf orders]
\label{thm:leaf-order-characterization}
Let $K$ be a finite pure $d$-dimensional simplicial complex, $d\ge 1$.  The
following are equivalent:
\begin{enumerate}
\setlength{\itemsep}{2pt}
\item[\textup{(i)}] the $d$-simplices of $K$ admit a codimension-one leaf order;
\item[\textup{(ii)}] the internal top incidence graph $\Gamma_d^{\mathrm{int}}(K)$
is a forest (equivalently, $\Gamma_d(K)$ is a forest);
\item[\textup{(iii)}] there is no closed codimension-one incidence chain
\[
  \eta_1-\rho_1-\eta_2-\rho_2-\cdots-\eta_k-\rho_k-\eta_1,
  \qquad k\ge 3,
\]
in which the $\eta_i$ are distinct $d$-simplices, the $\rho_i$ are distinct
internal $(d-1)$-simplices, and $\rho_i\prec\eta_i,\eta_{i+1}$ (indices modulo
$k$).
\end{enumerate}
\end{proposition}

\begin{proof}
\textup{(ii)}$\Leftrightarrow$\textup{(iii)}: a cycle in the bipartite graph
$\Gamma_d^{\mathrm{int}}(K)$ alternates between $d$-simplices and internal
$(d-1)$-simplices, which is precisely a closed chain of the stated form.  The
case $k=2$ cannot occur for distinct $d$-simplices in a simplicial complex,
since two distinct $d$-simplices cannot share two distinct facets.

\textup{(i)}$\Rightarrow$\textup{(iii)}: given a leaf order and such a chain, let
$\eta_m$ be the chain simplex added last.  Both of its chain-neighbours precede
it, so $\eta_m$ shares the two distinct facets $\rho_{m-1},\rho_m$ with the old
$d$-dimensional part, contradicting the leaf condition.

\textup{(ii)}$\Rightarrow$\textup{(i)}: induct on the number of $d$-simplices.
In a forest $\Gamma_d^{\mathrm{int}}(K)$ every internal $(d-1)$-vertex has degree
at least two, so any leaf of a nontrivial component is a $d$-simplex; hence some
$d$-simplex $\sigma$ is incident to at most one internal facet, i.e.\ shares at
most one facet with the other $d$-simplices.  Deleting $\sigma$ leaves
$\Gamma_d^{\mathrm{int}}$ a forest, so by induction the remaining $d$-simplices
admit a leaf order, and appending $\sigma$ extends it.
\end{proof}

\begin{theorem}[Correction-free recursion in the incidence-forest regime]
\label{prop:leaf-order}
Let $K$ be a finite pure $d$-dimensional simplicial complex whose internal top
incidence graph $\Gamma_d^{\mathrm{int}}(K)$ is a forest.  Choose a
codimension-one leaf order $\sigma_1,\ldots,\sigma_N$, whose existence is
guaranteed by Proposition~\ref{thm:leaf-order-characterization}.  Then, for every
$j$ and every facet $\tau\prec\sigma_j$,
\[
  F(B_{j-1},\sigma_j,\tau)=0.
\]
Consequently the Top-Face Recursion is correction-free along this order:
\[
  \ME_{B_j}
  =
  z_d\ME_{B_{j-1}}
  +
  \sum_{\tau\prec\sigma_j}
  \ME_{P(B_j)\setminus\{\sigma_j,\tau\}}
  \qquad (1\le j\le N).
\]
\end{theorem}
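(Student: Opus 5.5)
The plan is to apply the incidence-separation criterion (Lemma~\ref{thm:incidence-separation}) at each step $j$, with $K=B_{j-1}$ and $\sigma=\sigma_j$. Once $F(B_{j-1},\sigma_j,\tau)=0$ is known for every facet $\tau\prec\sigma_j$, Definition~\ref{def:F} gives $L_{\sigma_j,\tau}=\ME_{P(B_j)\setminus\{\sigma_j,\tau\}}$. Substituting this into the Top-Face Recursion (Theorem~\ref{thm:top-face}) with $K'=B_j$ yields the displayed formula. The hypotheses of Theorem~\ref{thm:top-face} hold because $\sigma_j\notin B_{j-1}$ and $\partial\sigma_j\subset K^{(d-1)}\subset B_{j-1}$. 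So the whole content is the vanishing of $F$.

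Fix $j$ and $\tau\prec\sigma_j$. Let $S=S_{\sigma_j,\tau}$ be the other facets of $\sigma_j$, and let $C=C_{B_{j-1},\tau}$ be the old $d$-simplices among $\sigma_1,\dots,\sigma_{j-1}$ that contain $\tau$. If $C=\varnothing$, we are done by Corollary~\ref{cor:obstruction-free}. Otherwise, suppose for contradiction that some path in $\Gamma_d(B_{j-1})\setminus\{\tau\}$ runs from some $\rho_0\in S$ to some $\eta_r\in C$. Take a shortest one, $\rho_0-\eta_1-\rho_1-\cdots-\rho_{r-1}-\eta_r$, which is simple with $r\ge1$. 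Since $\tau\prec\eta_r$, we have $\eta_r\in B_{j-1}$.

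I then close this into a cycle in $\Gamma_d(B_j)\subseteq\Gamma_d(K)$:
\[
\sigma_j-\rho_0-\eta_1-\cdots-\eta_r-\tau-\sigma_j.
\]
The path avoids $\tau$ and $\sigma_j$, and $\rho_0\ne\tau$, so all vertices are distinct. It is bipartite and alternating, and it has length at least $4$, so it is a genuine cycle. This contradicts the hypothesis that $\Gamma_d(K)$ is a forest, which is equivalent to $\Gamma_d^{\mathrm{int}}(K)$ being a forest by the remark after Definition~\ref{def:internal-incidence-graph}. Hence $S$ and $C$ are separated, and the lemma gives $F=0$.

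The leaf order itself is not needed beyond fixing the filtration, since any order works once $\Gamma_d(K)$ is acyclic. Still, the argument matches the leaf condition: $\sigma_j$ touches old top simplices through at most one facet, so either $\tau$ is that facet or $C=\varnothing$. The only point requiring care is that $\Gamma_d(B_{j-1})\setminus\{\tau\}$ is a subgraph of $\Gamma_d(K)$, so that the cycle above really lives in $\Gamma_d(K)$. This holds because $B_{j-1}$ and $K$ have the same $(d-1)$-simplices and the $d$-simplices of $B_{j-1}$ form a subset of those of $K$. I do not expect any real obstacle beyond this bookkeeping.
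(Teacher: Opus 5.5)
Your proof is correct. It reaches the separation required by Lemma~\ref{thm:incidence-separation} by a different mechanism from the paper.

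\textbf{The paper's route.} The paper argues locally from the leaf condition. For the (at most one) facet $\tau_0$ of $\sigma_j$ lying in an old $d$-simplex, every other facet $\rho\prec\sigma_j$ has $C_{B_{j-1},\rho}=\varnothing$. So all of $S_{\sigma_j,\tau_0}$ consists of isolated vertices of $\Gamma_d(B_{j-1})$, and trivially cannot reach $C_{B_{j-1},\tau_0}$. For every other facet $\tau$ one has $C_{B_{j-1},\tau}=\varnothing$, so Corollary~\ref{cor:obstruction-free} applies directly.

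\textbf{Your route.} You close a hypothetical obstruction path into the cycle $\sigma_j-\rho_0-\cdots-\eta_r-\tau-\sigma_j$ in $\Gamma_d(K)$. This contradicts global forestness, via the remark that $\Gamma_d(K)$ is a forest if and only if $\Gamma_d^{\mathrm{int}}(K)$ is.

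\textbf{What each buys.} Your route gives more. As you note, the leaf order becomes irrelevant: the recursion is correction-free along \emph{any} ordering of the $d$-simplices. An example is three triangles in a strip attached with the middle one last, which is not a leaf order. Moreover, your argument together with its converse (also via the Lemma) shows that the $j$-th attachment is correction-free for every facet $\tau$ exactly when $\sigma_j$ lies on no cycle of $\Gamma_d(B_j)$.

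The paper's route needs no path or cycle construction and uses only the leaf condition at step $j$. It can therefore certify an individual attachment as correction-free even inside a complex whose top incidence graph is not a forest.
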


\begin{proof}
Fix $j$ and set $K_0:=B_{j-1}$.  By the definition of a codimension-one leaf
order, among the facets of $\sigma_j$ there is at most one facet, say
$\tau_0$, which is contained in an old $d$-simplex of $K_0$.  If
$\tau\neq\tau_0$, then $C_{K_0,\tau}=\varnothing$, and hence
$F(K_0,\sigma_j,\tau)=0$ by Corollary~\ref{cor:obstruction-free}.

It remains to consider the possible exceptional facet $\tau_0$.  For every
other facet $\rho\prec\sigma_j$ with $\rho\neq\tau_0$, the leaf-order
condition gives $C_{K_0,\rho}=\varnothing$.  Thus each such $\rho$ is isolated
from the old $d$-simplices in the incidence graph $\Gamma_d(K_0)$.  After
removing $\tau_0$, there is therefore no path in
$\Gamma_d(K_0)\setminus\{\tau_0\}$ from $S_{\sigma_j,\tau_0}$ to
$C_{K_0,\tau_0}$.  The incidence-separation criterion,
Lemma~\ref{thm:incidence-separation}, gives
$F(K_0,\sigma_j,\tau_0)=0$.

Substituting
$L_{\sigma_j,\tau}=\ME_{P(B_j)\setminus\{\sigma_j,\tau\}}$
into Theorem~\ref{thm:top-face} proves the displayed recursion.
\end{proof}

\begin{remark}[Relation with Forman acyclicity and shellability]
\label{rem:berge-vs-forman}
The incidence-forest condition should be distinguished from two nearby notions.
First, it is not Forman acyclicity: the latter constrains a particular matching
on the face poset, whereas forestness of $\Gamma_d^{\mathrm{int}}(K)$ is a
condition on the top-dimensional incidence structure of $K$ itself, before any
matching is chosen.  The two are linked only by projection: a closed gradient path
created in the lifting step has, after forgetting orientations and
lower-dimensional parts, a closed codimension-one incidence chain as its shadow,
so a forest incidence graph rules out exactly the top-level gradient cycles
measured by $F(K,\sigma,\tau)$, though not lower-dimensional closed gradient paths of
arbitrary matchings.

Second, it is not shellability: a shelling controls the intersection of each new
facet with the previous union (a pure $(d-1)$-subcomplex of its boundary), while
the present condition only sees $\Gamma_d^{\mathrm{int}}(K)$.  Neither implies
the other.
\end{remark}

\begin{corollary}[Stacked balls]
\label{cor:stacked-balls}
Let $K$ be a stacked $d$-ball, that is, a complex obtained from a single
$d$-simplex by repeatedly attaching a new $d$-simplex along a boundary
$(d-1)$-face, with a new apex vertex at each step.  Then $\Gamma_d(K)$ is a forest, equivalently the construction
order of its $d$-simplices is a codimension-one leaf order.  Consequently
every top-face attachment in this order is correction-free, and the Morse
ensemble polynomial of $K$ is obtained by iterating
\[
  \ME_{B_j}
  =
  z_d\ME_{B_{j-1}}
  +
  \sum_{\tau\prec\sigma_j}
  \ME_{P(B_j)\setminus\{\sigma_j,\tau\}}.
\]
In particular, for $d=2$ this applies to triangulated disks built by attaching
triangles successively along boundary edges.
\end{corollary}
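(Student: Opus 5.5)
The plan is to verify directly that the construction order $\sigma_1,\ldots,\sigma_N$ of a stacked $d$-ball is a codimension-one leaf order in the sense of Definition~\ref{def:codim-one-leaf-order}. Once that is done, Theorem~\ref{prop:leaf-order} gives $F(B_{j-1},\sigma_j,\tau)=0$ for all $j$ and all $\tau\prec\sigma_j$, and the displayed recursion follows. Forestness of $\Gamma_d(K)$ then also follows, via the implication (i)$\Rightarrow$(ii) of Proposition~\ref{thm:leaf-order-characterization}. So the only real content is the leaf-order check. Note that the recursion is stated over the filtration $B_j=K^{(d-1)}\cup\{\sigma_1,\ldots,\sigma_j\}$, with $B_0$ the full $(d-1)$-skeleton, not over the growing balls themselves.

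Write $K_j$ for the ball after $j$ steps, so $K_j$ is the union of the closures of $\sigma_1,\ldots,\sigma_j$. The new simplex is $\sigma_j=\tau_j * v_j$, where $\tau_j$ is a boundary $(d-1)$-face of $K_{j-1}$ and $v_j$ is a new vertex not in $K_{j-1}$. Every facet of $\sigma_j$ other than $\tau_j$ contains $v_j$. Any earlier $d$-simplex $\sigma_i$ with $i<j$ lies in $K_{j-1}$ and so does not contain $v_j$. Hence no facet of $\sigma_j$ other than $\tau_j$ lies in any $\sigma_i$ with $i<j$. This is exactly the requirement that at most one facet of $\sigma_j$ be contained in a $d$-simplex of $B_{j-1}$: the $d$-simplices of $B_{j-1}$ are precisely $\sigma_1,\ldots,\sigma_{j-1}$, and the extra $(d-1)$-skeleton in $B_{j-1}$ contributes no $d$-simplices. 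For $j=1$ the condition holds vacuously.

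I would add two brief remarks. First, the filtration $B_j$ contains all $(d-1)$-faces of $K$ from the start, so $\partial\sigma_j\subset B_{j-1}$ and Theorem~\ref{thm:top-face} applies at each step. Second, $K$ is pure $d$-dimensional, as Proposition~\ref{thm:leaf-order-characterization} and Theorem~\ref{prop:leaf-order} require. The role of the boundary-face hypothesis is that $\tau_j$ lies in exactly one earlier simplex. This is not needed for the leaf condition, which only counts facets; the new apex alone guarantees the condition.

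For $d=2$, attaching a triangle along a boundary edge with a new apex vertex is the stacked construction, so the statement specializes. The main point to get right is the mismatch between the geometric filtration $K_j$ and the skeleton filtration $B_j$. I expect that bookkeeping to be the only place needing care. The apex argument transfers verbatim because it concerns only $d$-simplices.
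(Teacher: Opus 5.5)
Your proposal is correct and follows essentially the same route as the paper: check that the construction order is a codimension-one leaf order, then invoke Proposition~\ref{thm:leaf-order-characterization} for forestness of $\Gamma_d(K)$ and Theorem~\ref{prop:leaf-order} for the correction-free recursion. Your apex-vertex argument and your remark distinguishing the skeleton filtration $B_j$ from the geometric balls $K_j$ spell out what the paper's short proof compresses into ``all its other facets are new boundary faces.''
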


\begin{proof}
In a stacked-ball construction, the first $d$-simplex has no old
$d$-dimensional neighbour.  Each later $d$-simplex is glued along exactly one
boundary facet of the previous complex, and all its other facets are new
boundary faces.  Hence, at the moment it is added, it has at most one facet
contained in an old $d$-simplex.  This is precisely the codimension-one leaf
order condition, so Proposition~\ref{thm:leaf-order-characterization} implies that
$\Gamma_d(K)$ is a forest.  The correction-free recursion follows from
Theorem~\ref{prop:leaf-order}.  Consequently stacked $d$-balls provide an
infinite family in every dimension for which the full Morse ensemble polynomial
is computable by a correction-free recursion.
\end{proof}

\begin{example}[A two-triangle stacked ball]
\label{ex:two-triangle-stacked-ball}
Let $K$ be the $2$-dimensional complex on vertices $\{0,1,2,3\}$ with
triangles
\[
  \sigma_1=\{0,1,2\},\qquad \sigma_2=\{0,1,3\}.
\]
Thus $K$ is the smallest non-degenerate stacked $2$-ball, with
$f(K)=(4,5,2)$.  Let $B_0$ be the one-skeleton of $K$, and set
\[
  B_1=B_0\cup\{\sigma_1\},
  \qquad
  B_2=B_1\cup\{\sigma_2\}=K.
\]
The first attachment has no old $2$-simplex.  The second attachment shares
only the edge $\{0,1\}$ with the old $2$-simplex $\sigma_1$.  Hence the order
$\sigma_1,\sigma_2$ is a codimension-one leaf order, and Corollary~\ref{cor:stacked-balls}
can be applied twice.  Explicitly,
\[
  \ME_{B_j}
  =
  z_2\ME_{B_{j-1}}
  +
  \sum_{\tau\prec\sigma_j}
  \ME_{P(B_j)\setminus\{\sigma_j,\tau\}},
  \qquad j=1,2,
\]
with no correction term in either step.  Direct enumeration, equivalently the
above two-step recursion, gives
\begin{align*}
  \ME_K
  ={}&
  32z_0
  +78z_0^2z_1
  +48z_0^3z_1^2
  +8z_0^4z_1^3 \\
  &+80z_0z_1z_2
  +116z_0^2z_1^2z_2
  +48z_0^3z_1^3z_2
  +6z_0^4z_1^4z_2 \\
  &+32z_0z_1^2z_2^2
  +32z_0^2z_1^3z_2^2
  +10z_0^3z_1^4z_2^2
  +z_0^4z_1^5z_2^2.
\end{align*}
The sum of the coefficients is $491$, agreeing with direct enumeration of
acyclic matchings on $P(K)$.  This example is the first instance in which the
correction-free recursion genuinely iterates through more than one
$2$-simplex.
\end{example}

\begin{example}[Iterating the recursion: $\Delta^2$ over $C_3$]
\label{ex:tfr-delta2}
Take $K=\partial\Delta^2=C_3$ and let $\sigma=\{0,1,2\}$ be the unique
$2$-simplex, so that $K'=\Delta^2$.  Since $\dim K<2$, the non-liftable
correction vanishes for every facet $\tau\prec\sigma$ by
Corollary~\ref{cor:obstruction-free}.  Hence
\[
  L_{\sigma,\tau}
  =
  \ME_{P(K')\setminus\{\sigma,\tau\}}.
\]
For each of the three edges $\tau$ of $\sigma$, the poset
$P(K')\setminus\{\sigma,\tau\}$ is the face poset of a path $P_3$.
Thus
\[
  \ME_{P_3}=3z_0+4z_0^2z_1+z_0^3z_1^2.
\]
Using
\[
  \ME_{C_3}(z_0,z_1)
  =
  9z_0z_1+6z_0^2z_1^2+z_0^3z_1^3,
\]
the Top-Face Recursion gives
\begin{align*}
  \ME_{\Delta^2}
  &=
  z_2\ME_{C_3}+3\ME_{P_3} \\
  &=
  9z_0
  +9z_0z_1z_2
  +12z_0^2z_1
  +6z_0^2z_1^2z_2
  +3z_0^3z_1^2
  +z_0^3z_1^3z_2.
\end{align*}
The sum of the coefficients is $40$, agreeing with direct enumeration
of acyclic matchings on $P(\Delta^2)$.
\end{example}

The preceding criterion detects whether the correction term vanishes.  When it
is nonzero, the bad matchings are governed by the same obstruction paths.
Indeed, if
\[
  \pi:\rho_0-\eta_1-\rho_1-\cdots-\rho_{r-1}-\eta_r
\]
is an incidence path in \(\Gamma_d(K)\setminus\{\tau\}\) from
\(S_{\sigma,\tau}\) to \(C_{K,\tau}\), then it determines the forced matching
\[
  M_\pi=\{(\rho_0,\eta_1),(\rho_1,\eta_2),\ldots,
  (\rho_{r-1},\eta_r)\}.
\]
A non-liftable matching contains the forced pairs associated to at least one
such obstruction path.  Thus \(F(K,\sigma,\tau)\) may be viewed as a finite
union, or equivalently an inclusion--exclusion, over obstruction paths.  The
first nonzero \(z_d\)-layer is controlled by the shortest ones:

\begin{proposition}[Leading obstruction layer]
\label{prop:leading-obstruction}
Suppose that \(S_{\sigma,\tau}\) and \(C_{K,\tau}\) lie in the same component of
\(\Gamma_d(K)\setminus\{\tau\}\), and set
\[
  \delta=
  \operatorname{dist}_{\Gamma_d(K)\setminus\{\tau\}}
  (S_{\sigma,\tau},C_{K,\tau})=2r-1.
\]
Then
\[
  \deg_{z_d} F(K,\sigma,\tau)=f_d(K)-r.
\]
Moreover, let \(N_{\mathrm{forc}}\) be the number of distinct forced matchings
\(M_\pi\) arising from shortest obstruction paths.  Then the coefficient of
\[
  \left(\prod_{i=0}^{d-2} z_i^{f_i(K)}\right)
  z_{d-1}^{f_{d-1}(K)-1-r}z_d^{f_d(K)-r}
\]
in \(F(K,\sigma,\tau)\) is at least \(N_{\mathrm{forc}}\).  If every non-liftable
matching contributing to this coefficient arises from a shortest obstruction
path, then it is exactly \(N_{\mathrm{forc}}\).
\end{proposition}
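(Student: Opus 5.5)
The plan is to read off the $z_d$-degree from the minimum number of $d$-simplices a non-liftable matching must match, and then identify the extremal matchings. Start with the bookkeeping in $Q=Q_{\sigma,\tau}=P(K')\setminus\{\sigma,\tau\}$. The rank-$d$ elements of $Q$ are exactly the $f_d(K)$ old $d$-simplices. The rank-$(d-1)$ elements number $f_{d-1}(K)-1$, since $\tau$ has been removed. Hence for $M'\in\cA(Q)$ we have $c_d(M')=f_d(K)-m_d(M')$, where $m_d(M')$ is the number of pairs of $M'$ of type $(d-1,d)$. Thus the claim $\deg_{z_d}F=f_d(K)-r$ amounts to two statements:
\begin{enumerate}
\item[(a)] every non-liftable $M'$ has $m_d(M')\ge r$;
\item[(b)] some non-liftable $M'$ has $m_d(M')=r$.
\end{enumerate}

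For (a), take a closed gradient path $\gamma$ in $M'\cup\{(\tau,\sigma)\}$. It must use $(\tau,\sigma)$, as in the necessity part of Lemma~\ref{thm:incidence-separation}. A gradient path alternates between matched pairs of one fixed type. Since $(\tau,\sigma)$ has type $(d-1,d)$, every pair of $\gamma$ is a $(d-1,d)$-pair. Write $\gamma$ as
\[
\tau,\sigma,\rho_0,\eta_1,\rho_1,\ldots,\eta_s,\tau .
\]
Here each $(\rho_{i-1},\eta_i)\in M'$, each $\rho_i\prec\eta_i$, $\rho_0\in S_{\sigma,\tau}$, and $\eta_s\in C_{K,\tau}$. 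None of $\rho_0,\ldots,\rho_{s-1}$ equals $\tau$, because they lie in $Q$. So $\rho_0-\eta_1-\cdots-\eta_s$ is a walk in $\Gamma_d(K)\setminus\{\tau\}$ of length $2s-1$. It follows that $2s-1\ge\delta=2r-1$, so $s\ge r$. Since the $\eta_i$ on $\gamma$ are matched to distinct $\rho_{i-1}$, we get $m_d(M')\ge s\ge r$, and therefore $\deg_{z_d}F\le f_d(K)-r$.

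For (b), let $\pi$ be a shortest obstruction path. The sufficiency part of Lemma~\ref{thm:incidence-separation} shows that $M_\pi$ is acyclic on $Q$ and non-liftable. It has exactly $r$ pairs, all of type $(d-1,d)$, and every cell of dimension $\le d-2$ is critical. Its monomial is therefore
\[
\Bigl(\prod_{i\le d-2}z_i^{f_i(K)}\Bigr)z_{d-1}^{f_{d-1}(K)-1-r}z_d^{f_d(K)-r}.
\]
This gives equality of the degree. Distinct forced matchings are distinct elements of $\cA(Q)$ with this monomial, so the coefficient is at least $N_{\mathrm{forc}}$.

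For exactness, note that a matching $M'$ with this monomial has exactly $r$ pairs, all of type $(d-1,d)$, and no pairs in lower dimensions, since all lower cells are critical. If $M'$ is non-liftable, the argument in (a) gives $s\ge r$ matched $d$-simplices on $\gamma$, so $s=r$ and $M'$ consists precisely of the pairs along $\gamma$. The underlying walk has length $\delta$, so it is a shortest path $\pi$, and then $M'=M_\pi$. Under the stated hypothesis, where each contributing matching arises from a shortest obstruction path, this identification is immediate and the coefficient equals $N_{\mathrm{forc}}$.

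The step I expect to need the most care is in (a). One must check that the closed path stays in the $(d-1,d)$ level, which holds because each matched pair fixes the dimension of the path. One must also check that the closing step $\eta_s\to\tau$ is the only point where $\tau$ is met, so that the walk genuinely lives in $\Gamma_d(K)\setminus\{\tau\}$. Both points follow the necessity argument of Lemma~\ref{thm:incidence-separation}.
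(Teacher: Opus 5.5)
Your proposal is correct and follows the paper's proof. The bound $\deg_{z_d}F\le f_d(K)-r$ comes from projecting the closed gradient path through $(\tau,\sigma)$ to an incidence walk of length at least $\delta=2r-1$ in $\Gamma_d(K)\setminus\{\tau\}$, and both the degree and the lower bound $N_{\mathrm{forc}}$ are attained by the forced matchings $M_\pi$ of Lemma~\ref{thm:incidence-separation}.

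Your exactness paragraph actually goes slightly beyond the paper: it shows the final hypothesis always holds, so the coefficient is always exactly $N_{\mathrm{forc}}$. For this you need the $s$ pairs along $\gamma$ to be distinct, which you assert without proof. It holds because a repeated pair would already give a closed gradient path inside the acyclic matching $M'$.
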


\begin{proof}
Every non-liftable matching contributing to \(F(K,\sigma,\tau)\) contains an
obstructing \(V\)-path.  Its top-dimensional part projects to an incidence path
in \(\Gamma_d(K)\setminus\{\tau\}\) from \(S_{\sigma,\tau}\) to
\(C_{K,\tau}\), hence has length at least \(2r-1\).  Therefore the matching uses
at least \(r\) matched \((d{-}1,d)\)-pairs.  Since each matched \(d\)-simplex lowers
the \(z_d\)-exponent by one, every non-liftable term has \(z_d\)-degree at most
\(f_d(K)-r\).

Conversely, every shortest obstruction path gives the forced matching
\(M_\pi\) constructed in Lemma~\ref{thm:incidence-separation}; its lift with
\((\tau,\sigma)\) contains the closed \(V\)-path displayed there.  Such a
matching uses exactly \(r\) matched \((d{-}1,d)\)-pairs, so the degree
\(f_d(K)-r\) is attained.  The shortest obstruction paths give
\(N_{\mathrm{forc}}\) distinct forced matchings contributing to the displayed
coefficient, hence the stated lower bound.  Under the additional hypothesis in
the last sentence of the statement, there are no further non-liftable matchings
contributing to this coefficient, so the lower bound is an equality.
\end{proof}

\begin{remark}[Analogy with Tutte deletion-contraction]
\label{rem:V-paths}
As in the bridge case (Remark~\ref{rem:non-bridge}), the Top-Face Recursion is
the ME analogue of Tutte deletion--contraction, with contraction performed in
the face poset: matching $\sigma$ with a facet $\tau$ removes only the pair
$(\tau,\sigma)$, leaving the other facets in the poset, so there is one
contraction-type term per facet of $\sigma$.  The new feature in dimension
$\ge 2$ is the lift condition in $L_{\sigma,\tau}$: $F(K,\sigma,\tau)$ is the
generating function of matchings on $P(K')\setminus\{\sigma,\tau\}$ for which
adjoining $(\tau,\sigma)$ creates a closed gradient path.  In dimension one, $F=0$
reduces to the bridge condition, recovering Theorem~\ref{thm:recursion}.
\end{remark}

\subsection{Pseudomanifold routing}
\label{subsec:pseudomanifold-routing}

The incidence-forest regime explains when the correction term vanishes.  At
the opposite end, pseudomanifold attachments typically have nonzero correction
terms, but the liftability test has a concrete dual-graph form.  We isolate it
as a deterministic routing criterion.

This is in the spirit of the use of discrete Morse theory on triangulated
manifolds and manifolds with boundary, especially in Benedetti's work on
boundary-critical discrete Morse functions and on the PL interpretation of
discrete Morse vectors~\cite{BenedettiBoundary,BenedettiSmoothing}.  The purpose
here is different: we do not construct manifold Morse functions, prove relative
Morse inequalities, or compare discrete and PL handle vectors.  The result below
is a local statement about the Top-Face Recursion: for a fixed matching on
$Q_{\sigma,\tau}$, the obstruction to adjoining the pair $(\tau,\sigma)$ is
detected by a routed path in the punctured dual graph.

Recall that a pure $d$-dimensional complex is a \emph{$d$-pseudomanifold}
if every $(d-1)$-simplex lies in exactly two $d$-simplices.  Its
\emph{dual graph} $H$ has the $d$-simplices as vertices and the
$(d-1)$-simplices as edges, each edge joining the two $d$-simplices that
contain it.  Let $K'$ be a $d$-pseudomanifold and let $\sigma$ be a
$d$-simplex.  Put $K=K'\setminus\{\sigma\}$ and write
$\rho_0,\ldots,\rho_d$ for the facets of $\sigma$.  In $K$, each
$\rho_i$ lies in a unique $d$-simplex $a_i$, namely its other coface in
$K'$.  Removing $\sigma$ from the dual graph leaves a graph $H_\sigma$ on
the $d$-simplices of $K$.  Each facet $\rho_i$ of $\sigma$, no longer shared
with a second $d$-simplex, becomes a \emph{boundary half-edge} of $H_\sigma$, a
dangling half-edge attached to $a_i$.

\begin{definition}[Gradient trajectory from a boundary facet]
\label{def:gradient-trajectory}
Let $M'$ be an acyclic matching on
$Q_{\sigma,\tau}=P(K')\setminus\{\sigma,\tau\}$, where $\tau=\rho_j$.  The
\emph{gradient trajectory from $\tau$} is the walk in the dual graph $H_\sigma$
that starts at the $d$-simplex $a_j$ and repeats the following step:
\begin{itemize}
\setlength{\itemsep}{1pt}
\item if $\eta$ is unmatched, stop;
\item if $\eta$ is matched in $M'$ to a facet $\rho$, leave $\eta$ through
$\rho$.  If $\rho$ is internal, move to the unique $d$-simplex on the other
side of $\rho$; if $\rho=\rho_k$ is a boundary half-edge, stop and say that
the trajectory \emph{exits at $\rho_k$}.
\end{itemize}
\end{definition}

The pseudomanifold condition makes the trajectory deterministic: each internal
$(d-1)$-simplex has exactly two cofaces, so after leaving a matched
$d$-simplex there is a unique next $d$-simplex.

\begin{proposition}[Trajectory criterion for liftability on pseudomanifolds]
\label{prop:forest-routing}
Let $K'$ be a $d$-pseudomanifold, let $\sigma$ be a $d$-simplex, let
$\tau=\rho_j$ be a facet of $\sigma$, and let $M'$ be an acyclic matching on
$Q_{\sigma,\tau}$.  Then $M'$ is non-liftable if and only if the gradient
trajectory of $M'$ from $\tau$ exits at a facet $\rho_k$ of $\sigma$ with
$k\neq j$.
\end{proposition}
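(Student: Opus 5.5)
The plan is to turn liftability into a statement about backward-traced gradient paths, and then to observe that on a pseudomanifold the backward trace from $\tau$ is exactly the gradient trajectory of Definition~\ref{def:gradient-trajectory}. First, since $M'$ is acyclic, every closed gradient path of $M'\cup\{(\tau,\sigma)\}$ must use the new pair $(\tau,\sigma)$. A closed gradient path alternates between two consecutive dimensions, and here it contains the $(d-1,d)$-pair $(\tau,\sigma)$. Hence it is a closed $V$-path in the $(d-1,d)$ levels, of the form
\[
  \tau,\ \sigma,\ \rho_{k},\ \eta_1,\ \rho',\ \eta_2,\ \ldots,\ \eta_s,\ \tau .
\]
Here $\rho_k\prec\sigma$ with $\rho_k\neq\tau$, so $k\neq j$. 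Each lower cell is matched in $M'$ to the following $d$-simplex. Finally $\tau\prec\eta_s$ with $\eta_s\neq\sigma$, so $\eta_s=a_j$.

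The key step is that in a pseudomanifold such a path is \emph{backward deterministic}. Reading the path backwards from $\tau$:
\begin{itemize}
\item the predecessor of $\tau$ is a $d$-simplex containing $\tau$ other than its partner $\sigma$, and the only candidate is $a_j$;
\item the predecessor of a $d$-simplex $\eta$ is its matched facet $\rho$, which is unique;
\item the predecessor of a $(d-1)$-simplex $\rho$ is a coface of $\rho$ other than its partner $\eta$; by the pseudomanifold condition this is the unique other coface.
\end{itemize}
These three rules are precisely the steps of the gradient trajectory from $\tau$. If the other coface of $\rho$ is $\sigma$, then $\rho=\rho_k$ is a boundary half-edge of $H_\sigma$, which is exactly where the trajectory is declared to exit. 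So a closed path through $(\tau,\sigma)$ exists if and only if this backward trace reaches $\sigma$, and it can only do so via some $\rho_k$ with $k\neq j$, because $\tau=\rho_j\notin Q_{\sigma,\tau}$ is never a matched facet in $M'$.

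For the forward direction ($\Leftarrow$), suppose the trajectory visits $a_j=\eta_s,\eta_{s-1},\ldots,\eta_1$ and exits at $\rho_k$ with $k\neq j$. Reversing the trajectory gives the closed path displayed above, so $M'$ is non-liftable. One should check that the $\eta_i$ and the internal facets along the way are distinct; this is handled by the termination argument below. Even without it, a closed walk of this shape contains a closed gradient path through $(\tau,\sigma)$.

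For the converse ($\Rightarrow$), suppose $M'$ is non-liftable. By the determinism above, the backward reading of the closed path coincides with the trajectory and reaches $\sigma$ through some $\rho_k$ with $k\neq j$, which is an exit at $\rho_k$. It remains to rule out the other behaviours of the trajectory. If it stops at an unmatched $d$-simplex, the backward trace cannot continue, so no closed path through $(\tau,\sigma)$ exists.

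The step I expect to need the most care is termination: the trajectory must not run forever. I would argue as follows. Suppose the trajectory revisits a $d$-simplex. Reversing the segment between the two visits gives a closed $V$-path that uses only pairs of $M'$. This holds because $\tau$ never occurs as an internal step, since the trajectory only enters $a_j$ initially from $\tau$ and afterwards only through facets in $Q_{\sigma,\tau}$. Such a closed $V$-path contradicts the acyclicity of $M'$. So the trajectory is finite and either stops or exits, and the dichotomy proves the proposition.
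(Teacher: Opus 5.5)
Your proof is correct and follows the paper's argument. In both, the pseudomanifold condition forces a closed gradient path through $(\tau,\sigma)$, step by step, to coincide with the gradient trajectory from $\tau$. The only difference is that you read Forman $V$-paths backwards, while the paper reads directed cycles of the modified Hasse diagram (unmatched edges up, matched edges down) forwards. Your explicit check that the trajectory terminates, using acyclicity of $M'$ on $Q_{\sigma,\tau}$, fills in a point the paper leaves implicit.
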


\begin{proof}
We use the directed-cycle convention for the modified Hasse diagram: matched
cover relations are reversed, while unmatched cover relations keep their upward
orientation.  This is equivalent to the usual closed  gradient path condition,
up to cyclic reparametrisation of the closed path.

Since $M'$ is acyclic, any directed cycle of
$M'\cup\{(\tau,\sigma)\}$ must use the new reversed edge $\sigma\to\tau$.
After this edge, the cycle moves upward from $\tau$ to its unique old coface
$a_j$ in $K$.  From then on, the path is forced: at a matched $d$-simplex it
leaves through the matched facet, and the pseudomanifold condition gives a
unique next $d$-simplex across every internal $(d-1)$-face.  Thus such a
directed cycle exists precisely when the gradient trajectory starting at $a_j$
exits through a boundary half-edge $\rho_k$ with $k\neq j$.  The converse is
obtained by adjoining the final upward step $\rho_k\prec\sigma$ and then the
new reversed edge $\sigma\to\tau$.
\end{proof}

Thus, in the pseudomanifold case, liftability of a fixed matching no longer
requires a global directed-cycle search in the Hasse diagram: it is reduced to
following a single deterministic trajectory in the punctured dual graph,
although this does not by itself give a closed formula for $F(K,\sigma,\tau)$.
Geometrically, the matched top-dimensional part of $M'$ turns $H_\sigma$ into a
directed forest rooted at the unmatched $d$-simplices and the boundary
half-edges, and $M'$ is non-liftable exactly when the tree containing $a_j$ is
rooted at a boundary half-edge $\rho_k$ with $k\neq j$.

\begin{example}[The tetrahedral sphere]
\label{ex:forest-routing-bdy}
Let $K'=\partial\Delta^3$, the boundary of the tetrahedron, a
$2$-pseudomanifold, and let $\sigma$ be any of its $2$-simplices.  For each
facet $\tau\prec\sigma$ the poset $Q_{\sigma,\tau}$ carries $835$ acyclic
matchings, of which exactly $98$ are non-liftable.  Direct enumeration
confirms that these $98$ are precisely the matchings whose gradient
trajectory from $\tau$ exits at one of the other two edges of $\sigma$, as
predicted by Proposition~\ref{prop:forest-routing}.
\end{example}



\subsection{Perfect coefficients}
\label{subsec:perfect-coefficients}

We end this section by recording the coefficient of \(\ME_K\) that detects
perfect discrete Morse matchings.  Fix a field \(\Bbbk\), and write
\[
  z^{\beta_\Bbbk(K)}
  =
  \prod_i z_i^{\beta_i(K;\Bbbk)}.
\]
The \emph{perfect coefficient} of \(K\) over \(\Bbbk\) is
\[
  p_\Bbbk(K):=[z^{\beta_\Bbbk(K)}]\ME_K.
\]
It counts acyclic matchings whose critical vector is the Betti vector of
\(K\) over \(\Bbbk\).  Thus \(p_\Bbbk(K)>0\) if and only if \(K\) admits a
perfect discrete Morse matching over \(\Bbbk\).  For instance, the classical
shellability construction yields such matchings for pure shellable complexes
\cite{Chari2000}; equivalently, in the present language, their Betti
coefficient is nonzero.

The Top-Face Recursion projects immediately to this coefficient.

\begin{proposition}[Perfect coefficient recursion]
\label{prop:perfect-coefficient-recursion}
Let \(K'=K\cup\{\sigma\}\), where \(\sigma\) is a \(d\)-simplex and
\(\partial\sigma\subset K\).  Then
\[
  p_\Bbbk(K')
  =
  [z^{\beta_\Bbbk(K')-e_d}]\ME_K
  +
  \sum_{\tau\prec\sigma}
  [z^{\beta_\Bbbk(K')}]
  \bigl(\ME_{Q_{\sigma,\tau}}-F(K,\sigma,\tau)\bigr),
\]
where a coefficient with a negative exponent is interpreted as zero.
\end{proposition}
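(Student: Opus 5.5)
The plan is to extract the coefficient of the monomial $z^{\beta_\Bbbk(K')}$ from both sides of the Top-Face Recursion (Theorem~\ref{thm:top-face}). Taking coefficients is linear, so the identity $\ME_{K'}=z_d\ME_K+\sum_{\tau\prec\sigma}L_{\sigma,\tau}$ gives
\[
  p_\Bbbk(K')=[z^{\beta_\Bbbk(K')}]\bigl(z_d\ME_K\bigr)+\sum_{\tau\prec\sigma}[z^{\beta_\Bbbk(K')}]L_{\sigma,\tau}.
\]
The left-hand side is $p_\Bbbk(K')$ by definition of the perfect coefficient.

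For the first term, observe that $z_d\ME_K$ is obtained by multiplying each monomial $z^{c}$ of $\ME_K$ by $z_d$, which shifts its exponent vector by $e_d$. Hence $[z^{\beta}](z_d\ME_K)=[z^{\beta-e_d}]\ME_K$. If $\beta_d(K';\Bbbk)=0$, the exponent $\beta-e_d$ has a negative entry. No monomial of $\ME_K$ has a negative exponent, so this coefficient is zero, which is exactly the stated convention.

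For the second term, substitute $L_{\sigma,\tau}=\ME_{Q_{\sigma,\tau}}-F(K,\sigma,\tau)$ from Definition~\ref{def:F}. This yields the displayed formula directly.

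One point of bookkeeping needs checking. In $L_{\sigma,\tau}$ the exponents are $c_i(M')$ computed on $Q_{\sigma,\tau}$. These must agree with the critical counts of the lifted matching $M'\cup\{(\tau,\sigma)\}$ on $P(K')$, so that the coefficient really corresponds to the Betti vector of $K'$. They do agree: $\tau$ and $\sigma$ are matched in the lift, and every other simplex is critical in the lift exactly when it is critical in $M'$. Definition~\ref{def:contracted-ME} already notes that the fixed pair contributes no variable.

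There is no real obstacle here; the statement is a formal projection of Theorem~\ref{thm:top-face}. The only care needed is the negative-exponent convention and the identification of critical vectors just described. No homological input is required, because $\beta_\Bbbk(K')$ is used only as a fixed target monomial.
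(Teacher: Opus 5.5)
Your proposal is correct and follows the paper's proof: take the coefficient of $z^{\beta_\Bbbk(K')}$ in the Top-Face Recursion and substitute $L_{\sigma,\tau}=\ME_{Q_{\sigma,\tau}}-F(K,\sigma,\tau)$. Your two checks, the $e_d$-shift with the negative-exponent convention and the agreement of critical counts on $Q_{\sigma,\tau}$ with those of the lifted matching, are details the paper leaves implicit.
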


\begin{proof}
This is obtained by taking the coefficient of \(z^{\beta_\Bbbk(K')}\) in the
Top-Face Recursion of Theorem~\ref{thm:top-face}, using
\(L_{\sigma,\tau}=\ME_{Q_{\sigma,\tau}}-F(K,\sigma,\tau)\).
\end{proof}

In particular, the same correction term \(F(K,\sigma,\tau)\) that measures the
failure of naive contraction also controls the enumeration of perfect
matchings.  If the attachment of \(\sigma\) creates a new \(d\)-class, then
\(\beta_\Bbbk(K')=\beta_\Bbbk(K)+e_d\), and the first term contributes
\(p_\Bbbk(K)\).  If it kills a \((d-1)\)-class, then the perfect coefficient
is controlled entirely by the liftable contraction terms.

\section{The Independence ME Polynomial $\Phi(G)$ as a Graph Invariant}
\label{sec:independence}

\subsection{The invariant $\Phi(G)$}

The graph formula recalled in Section~\ref{sec:laplacian} gives the
one-dimensional calibration of the invariant, while
Section~\ref{sec:separation} develops the higher-dimensional correction theory.
The invariant $\Phi(G)$ will simultaneously refine the independence-polynomial
data of $\Ind(G)$ and the Laplacian-spectral data encoded by the graph-level
Morse ensemble polynomial $\ME_G$.
We now prove the second main result: a graph invariant which strictly refines
the graph-level Morse ensemble $\ME_G$ and distinguishes examples not separated
by $T_G$ or $I(G;t)$. The idea is to apply the ME polynomial to a
higher-dimensional simplicial complex canonically associated to a graph,
\textit{its independence complex}.

This places $\Phi(G)$ in the spirit of the large literature relating
independence complexes, edge ideals, and discrete Morse theory.  The complex
$\Ind(G)$ is the Stanley--Reisner complex of the edge ideal of $G$, and discrete
Morse methods have long been used to simplify cellular resolutions of monomial
ideals~\cite{BatziesWelker2002}; the topology of independence complexes is also
studied extensively via shellability, connectivity, and Cohen--Macaulay-type
properties~\cite{Jonsson2008,Engstrom2009,Woodroofe2011}.  The invariant
considered here is not a resolution, a homotopy-type computation, or a
shellability criterion.  It records the full critical-vector distribution over
all acyclic matchings on $\Ind(G)$; the separation results below therefore give
new enumerative Morse information rather than reprove known topological
properties of independence complexes.

\begin{definition}
\label{def:Phi}
For a graph $G=(V,E)$, the \emph{independence complex}
$\Ind(G)=\{I\subseteq V\mid I\text{ independent in }G\}$
is a simplicial complex of dimension $\alpha(G)-1$,
where $\alpha(G)$ is the independence number, the size of the largest independent set.
The \emph{independence polynomial}
$I(G;t)=\sum_{k\geq 0}i_k(G)\,t^k$
records the number $i_k(G)$ of independent sets of size $k$.

We define the \emph{independence ME polynomial}
\[
  \Phi(G):=\ME_{\Ind(G)}(z_0,z_1,\ldots,z_{\alpha(G)-1}),
\]
the Morse ensemble polynomial of the independence complex of $G$.
\end{definition}

The invariant $\Phi(G)$ can be understood as follows.
While $\ME_G$ encodes the discrete Morse structure on the vertex-edge
incidence of $G$ (a $1$-dimensional problem), $\Phi(G)$ encodes the
full Morse ensemble on the independence complex $\Ind(G)$, a
higher-dimensional complex whose topology reflects the independence
structure of $G$. Specifically:
\begin{itemize}
\item $I(G;t)=f(\Ind(G);t)$ reads only the $f$-vector of $\Ind(G)$
  (how many simplices in each dimension), while $\Phi(G)$ reads the
  full Morse structure---how many critical simplices of each dimension
  can arise in any acyclic matching.
\item $T_G$ reads only the cycle matroid (which subsets of edges are
  forests), while $\Phi(G)$ encodes the homotopy and combinatorial
  topology of $\Ind(G)$, which reflects the global independence
  structure of $G$ beyond forests.
\end{itemize}

\begin{theorem}
\label{thm:main}
There exist non-isomorphic graphs $G_1,G_2$ with
$T_{G_1}=T_{G_2}$ and $I(G_1;t)=I(G_2;t)$,
but $\Phi(G_1)\neq\Phi(G_2)$.
Thus $\Phi(G)$ is not determined by the Tutte polynomial together
with the independence polynomial.
\end{theorem}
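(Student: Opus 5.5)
The plan is to produce an explicit pair of forests and to separate their independence complexes by a single coefficient of $\Phi$ that can be computed by hand.

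\textbf{Choice of witnesses.} Every forest with $m$ edges has Tutte polynomial $x^m$, whatever its number of vertices or its shape, so $T_{G_1}=T_{G_2}$ holds automatically once $G_1,G_2$ are forests with the same number of edges. I will therefore look for two non-isomorphic forests $G_1,G_2$ that have the same numbers $n$ of vertices and $m$ of edges, the same independence polynomial $I(G;t)$, and different degree sequences.

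The concrete pair will be found by a small exhaustive search over forests with fixed $(n,m)$, comparing $I(G;t)$ through the standard recursion $I(G;t)=I(G-v;t)+t\,I(G-N[v];t)$. Pairs of trees with equal independence polynomials are classical, and allowing isolated vertices and several components only enlarges the pool. Among the candidates I will prefer a pair whose independence complexes are homotopy equivalent, to match the stronger claim in the introduction. Independence complexes of forests are contractible or homotopy equivalent to a sphere, so this is easy to arrange.

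\textbf{Distinguishing coefficient.} Write $f=(f_0,f_1,\ldots)$ for the common $f$-vector of $\Ind(G_1)$ and $\Ind(G_2)$. It agrees for the two complexes because it is read off from $I(G;t)$. Consider the coefficient of
\[
z_0^{f_0-2}z_1^{f_1-2}\prod_{i\ge2}z_i^{f_i}
\]
in $\Phi(G)$. A matching with this critical vector consists of exactly two vertex--edge pairs and nothing else, since every matched pair lowers two consecutive exponents by one.

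Any two disjoint vertex--edge pairs form an acyclic matching. A closed gradient path $v_0,e_0,v_1,e_1,v_0$ would force $e_0=e_1=\{v_0,v_1\}$, which is impossible. This is the same computation as in \eqref{eq:two-pair-coefficient}. Hence the coefficient equals
\[
\binom{2f_1}{2}-f_1-\sum_{v}\binom{d_v}{2},
\]
where $d_v$ is the degree of $v$ in the $1$-skeleton of $\Ind(G)$. Since $d_v=n-1-\deg_G(v)$, this coefficient depends on $\sum_v\deg_G(v)^2$ once $n$, $m$ and $f_1$ are fixed. So it differs between $G_1$ and $G_2$ whenever their degree sequences have different sums of squares.

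\textbf{Main obstacle.} The hard step is the search itself: I need a pair of forests with identical independence polynomials but different $\sum_v\deg_G(v)^2$, and preferably homotopy equivalent independence complexes. Equal $I(G;t)$ does constrain low-order data. For instance $i_2=\binom n2-m$ is forced, and $i_3$ involves the number of paths of length two, which is itself $\sum_v\binom{\deg_G v}{2}$.

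For forests (no triangles), $i_3$ is determined by $n$, $m$ and $\sum_v\binom{\deg_G v}{2}$. Equal $i_3$ then forces equal $\sum_v\deg_G(v)^2$, which would make my chosen coefficient coincide. So the plan must fall back on a finer coefficient.

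The next choice is the coefficient counting single pairs together with a mixed pair, such as one $(0,1)$ pair and one $(1,2)$ pair. These counts depend on local incidences in $\Ind(G)$ that $I(G;t)$ does not see. Failing that, I would verify $\Phi(G_1)\neq\Phi(G_2)$ by direct computer enumeration of acyclic matchings on the two face posets, reporting one differing coefficient. Coefficients like the collapse count $[z_0]\Phi$ from Proposition~\ref{prop:collapsibility-coefficient} are natural candidates when both complexes are collapsible.
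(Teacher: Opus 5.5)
Your proposal does not yet prove the statement. It is an existence claim, and you never name $G_1,G_2$ or exhibit a coefficient on which $\Phi$ differs; both are deferred to a search. Worse, each hand-computable separation you propose provably fails.

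As you observe, for a forest $i_3=\binom n3-m(n-2)+\sum_v\binom{\deg_G v}{2}$. So equal $I(G;t)$ forces equal $\sum_v\deg_G(v)^2$, and your two-pair coefficient coincides.

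The mixed fallback fails for every graph. A matching consisting of one $(0,1)$-pair and one $(1,2)$-pair is automatically acyclic, since a closed gradient path needs two distinct pairs of the same dimension. The two pairs conflict only when they are $(v,e),(e,T)$ with $v\in e\subset T$. Hence the coefficient of $z_0^{f_0-1}z_1^{f_1-2}z_2^{f_2-1}\prod_{i\ge3}z_i^{f_i}$ equals $2f_1\cdot 3f_2-6f_2=6f_2(f_1-1)$. This is a function of the $f$-vector, i.e.\ of $I(G;t)$, so it can never separate.

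What remains is an unexecuted computer enumeration on an unspecified pair. The paper's proof is also computational at the separating step, but it commits to a concrete pair on six vertices. Each graph is a triangle plus three bridges, so $T=x^5+x^4+x^3y$, and both have $I=1+6t+9t^2+4t^3$. The paper then reports $[z_0]\Phi=270\neq 324$.

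Your forest route can in fact be completed with a hand-checkable certificate. Take $G_1=P_6\sqcup K_1$ and $G_2=S\sqcup K_2$, where $S$ is the five-vertex tree with degree sequence $(3,2,1,1,1)$.
\begin{itemize}
\item Both are forests with five edges, so $T_{G_i}=x^5$, and their degree sequences differ.
\item Since $I(P_6;t)=(1+2t)(1+4t+2t^2)$ and $I(S;t)=(1+t)(1+4t+2t^2)$, both have $I(G_i;t)=(1+t)(1+2t)(1+4t+2t^2)=1+7t+16t^2+14t^3+4t^4$.
\end{itemize}

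To separate them, use the slice in the proof of Theorem~\ref{thm:phi-determines-me}, which does not depend on the present theorem: $[z_2^{14}z_3^{4}]\Phi(G)=\ME_{\overline{G}}(z_0,z_1)$. By Theorem~\ref{thm:perfect}, the coefficient of $z_0z_1^{10}z_2^{14}z_3^{4}$ is then $7\tau(\overline{G})=\prod_{i\ge2}(7-\lambda_i(G))$.
\begin{itemize}
\item The Laplacian spectrum of $G_1$ is $\{0,0,1,2,3,2\pm\sqrt3\}$, giving $7\cdot 6\cdot 5\cdot 4\cdot 22=18480$.
\item The spectrum of $G_2$ is $\{0,0,1,2\}$ together with the roots of $x^3-7x^2+13x-5$, giving $7\cdot 6\cdot 5\cdot 86=18060$.
\end{itemize}
Both independence complexes are contractible, so this pair even meets your homotopy-equivalence preference.
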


\begin{proof}
Let $V=\{0,1,2,3,4,5\}$ and
\[
  E(G_1)=\{01,02,03,05,15,34\},
  \qquad
  E(G_2)=\{01,02,03,12,14,25\}.
\]

\begin{figure}[ht]
\centering
\begin{tikzpicture}[
  nd/.style={circle,draw,fill=black!10,inner sep=2pt,minimum size=18pt,font=\small},
  scale=1.05
]
\begin{scope}[xshift=0cm]
  \node[nd] (g1v0) at (0.8,0)  {$0$};
  \node[nd] (g1v1) at (2.4,0.9){$1$};
  \node[nd] (g1v2) at (2.4,-0.9){$2$};
  \node[nd] (g1v3) at (0.8,-1.5){$3$};
  \node[nd] (g1v4) at (2.4,-2.0){$4$};
  \node[nd] (g1v5) at (-0.8,0.9){$5$};
  \draw (g1v0)--(g1v1);
  \draw (g1v0)--(g1v2);
  \draw (g1v0)--(g1v3);
  \draw (g1v0)--(g1v5);
  \draw (g1v1)--(g1v5);
  \draw (g1v3)--(g1v4);
  \node[font=\small] at (0.8,-2.8) {$G_1$};
\end{scope}
\begin{scope}[xshift=5.5cm]
  \node[nd] (g2v0) at (0.0,0)  {$0$};
  \node[nd] (g2v1) at (1.5,1.0){$1$};
  \node[nd] (g2v2) at (1.5,-1.0){$2$};
  \node[nd] (g2v3) at (-1.4,0) {$3$};
  \node[nd] (g2v4) at (2.9,1.0){$4$};
  \node[nd] (g2v5) at (2.9,-1.0){$5$};
  \draw (g2v0)--(g2v1);
  \draw (g2v0)--(g2v2);
  \draw (g2v0)--(g2v3);
  \draw (g2v1)--(g2v2);
  \draw (g2v1)--(g2v4);
  \draw (g2v2)--(g2v5);
  \node[font=\small] at (0.8,-2.8) {$G_2$};
\end{scope}
\end{tikzpicture}
\caption{The graphs $G_1$ (left) and $G_2$ (right) witnessing
Theorem~\ref{thm:main}. Both have $6$ vertices, $6$ edges, the same
Tutte polynomial $T_{G_1}=T_{G_2}$, and the same independence polynomial
$I(G_1;t)=I(G_2;t)=1+6t+9t^2+4t^3$, yet $\Phi(G_1)\neq\Phi(G_2)$.}
\label{fig:main}
\end{figure}

\emph{Non-isomorphism:} degree sequences
$(4,2,2,2,1,1)\neq(3,3,3,1,1,1)$.

\emph{Same Tutte polynomial:}
Direct computation by deletion-contraction (verified on all
$2^6=64$ edge subsets) gives
\begin{equation}
  T_{G_1}=T_{G_2}=x^5+x^4+x^3y.
\end{equation}
Both graphs are connected with $6$ vertices, $6$ edges, and $3$
spanning trees; the cycle matroids agree (each has a unique circuit
of size~$3$), and since the Tutte polynomial depends only on the
cycle matroid, the equality follows.

\emph{Same independence polynomial:}
Direct enumeration yields
$I(G_1;t)=I(G_2;t)=1+6t+9t^2+4t^3$,
so $\alpha(G_1)=\alpha(G_2)=3$ and both $\Ind(G_i)$ have
$f$-vector $(f_0,f_1,f_2)=(6,9,4)$.

\emph{Different $\Phi$:}
Although $\Ind(G_1)$ and $\Ind(G_2)$ share the same $f$-vector,
their degree sequences as $1$-skeleta are
$(4,4,3,3,3,1)$ and $(4,4,4,2,2,2)$ respectively, so
$\Ind(G_1)\not\cong\Ind(G_2)$ as simplicial complexes.
However, non-isomorphic complexes need not have distinct ME
polynomials (ME is an invariant but not a complete invariant),
so direct computation is required. Enumerating all acyclic
matchings on $P(\Ind(G_i))$ gives
\[
  [z_0]\,\Phi(G_1) = 270 \;\neq\; 324 = [z_0]\,\Phi(G_2),
\]
where $[z_0]$ denotes the coefficient of $z_0^1z_1^0z_2^0$
(matchings in which exactly one vertex is critical and all
edges and triangles are non-critical).
Hence $\Phi(G_1)\neq\Phi(G_2)$.

\medskip

We note a stronger observation. The independence complexes
$\Ind(G_1)$ and $\Ind(G_2)$ have the same $f$-vector $(6,9,4)$,
and both satisfy $[z_0]\Phi(G_i)\neq 0$:
\[
  [z_0]\Phi(G_1) = 270,\qquad [z_0]\Phi(G_2) = 324.
\]

Thus $\Phi(G)$ distinguishes these graphs \emph{despite} their
independence complexes sharing the same homotopy type.
\end{proof}

The equalities $T_{G_1}=T_{G_2}$ and $I(G_1;t)=I(G_2;t)$ encode
only the cycle matroid structure and the $f$-vector of $\Ind(G_i)$,
respectively. Neither captures the fine combinatorial structure
of $\Ind(G_i)$ required to separate these complexes.
The remarkable aspect of Theorem~\ref{thm:main} is not merely that
$\Phi(G)$ separates more graphs than $T_G$ or $I(G;t)$, but that
it does so even when the separated complexes share homotopy type.
The ME polynomial of the independence complex is therefore a
genuinely combinatorial invariant, sensitive to information beyond
homotopy type, f-vector, and cycle matroid.

The coefficient $[z_0]\Phi(G)$ counts collapsing matchings of
$\Ind(G)$; in particular, it is positive precisely when $\Ind(G)$ is
collapsible. This is not the main reason for introducing $\Phi$, but it
explains the witness in Theorem~\ref{thm:main}: both independence complexes
are collapsible, yet the collapse counts differ,
\[
  [z_0]\Phi(G_1)=270,\qquad [z_0]\Phi(G_2)=324.
\]
Thus $\Phi$ records collapse-level combinatorial information invisible
to $T_G$, $I(G;t)$, and homotopy type.

\subsection{Independence of $\Phi$ from Laplacian invariants}

The pair of graphs in Theorem~\ref{thm:main} satisfies
$T_{G_1}=T_{G_2}$ but does \emph{not} have
$\ME_{G_1}=\ME_{G_2}$ (the Laplacian spectra differ).
 We next
show that even the graph-level Morse ensemble polynomial $\ME_G$ does
not determine $\Phi(G)$.

\begin{theorem}[$\Phi$ separates Laplacian-cospectral graphs]
\label{thm:phi-cospectral}
There exist Laplacian-cospectral non-isomorphic graphs $G_1,G_2$
with $\ME_{G_1}=\ME_{G_2}$ but $\Phi(G_1)\neq\Phi(G_2)$.
In particular, $\Phi$ is not determined by $\ME_G$ even when
$\ME_G$ is the complete Laplacian spectral invariant.
\end{theorem}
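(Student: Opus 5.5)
The statement is existential, so I would prove it with an explicit witness pair. The natural first candidate is the Laplacian-cospectral pair of Example~\ref{ex:cospectral}:
\[
  E(G_1)=\{01,02,03,05,14,23,45\},\qquad E(G_2)=\{01,02,03,14,15,24,25\}.
\]
The example already establishes that they are non-isomorphic and Laplacian-cospectral. By Theorem~\ref{cor:cospectral}, $\ME_{G_1}=\ME_{G_2}$ follows with no further work. What remains is to show $\Phi(G_1)\neq\Phi(G_2)$, and for that it suffices to find a single coefficient, or even a cheaper specialisation, on which the two polynomials differ.

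Before any enumeration, I would look for invariants read off from $\Phi$ that are easy to compute.

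\begin{enumerate}
\item Since $\ME_K$ determines the total degree pattern, the set of monomials reveals $f(\Ind(G))$: the trivial matching contributes $\prod_i z_i^{f_i}$, which is the unique monomial of maximal total degree. So $\Phi$ determines $I(G;t)$.
\item I would therefore first compute $I(G_1;t)$ and $I(G_2;t)$.
   \begin{itemize}
   \item In $G_1$ the nonedges give $i_2=15-7=8$.
   \item In $G_2$ likewise $i_2=8$, so pairs do not separate.
   \item For triples in $G_1$, the independent sets are $\{1,2,5\}$, $\{1,3,5\}$, $\{2,4,5\}$?\ (no: $45$ is an edge) — the list must be checked carefully.
   \item In $G_2$, which is bipartite-like with parts $\{0,4,5\}$ and $\{1,2,3\}$, the independent triples include $\{3,4,5\}$, $\{1,2,3\}$, and others.
   \end{itemize}
\item If $i_3$ differs, the argument is immediate. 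Alternatively, the independence number may already differ, and then $\Phi(G_1)$ and $\Phi(G_2)$ involve different numbers of variables $z_i$ with nonzero exponent.
\item If instead the independence polynomials coincide, I would use a single coefficient as in Theorem~\ref{thm:main}. The cheapest is $[z_0]\Phi$, the collapsing-matching count of Proposition~\ref{prop:collapsibility-coefficient}. A further option is the two-pair coefficient $[\prod z_i^{f_i}\,(z_0z_1)^{-2}]$, which counts pairs of disjoint vertex–edge incidences in the $1$-skeleton of $\Ind(G)$ and is given by formula~\eqref{eq:two-pair-coefficient}, a function of the degree sequence of $\Ind(G)^{(1)}$, i.e.\ the complement graph's degrees. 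That degree sequence is complement-determined: complementing $(4,2,2,2,2,2)$ and $(3,3,3,2,2,1)$ gives $(1,3,3,3,3,3)$ versus $(2,2,2,3,3,4)$. The sum of $\binom{\deg}{2}$ is $15$ versus $18$, so with equal $f_1$ the coefficient differs.
\end{enumerate}

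The two-pair coefficient is the route I would commit to, since it needs no computer enumeration. Any two distinct vertex–edge pairs $(v,e)$, $(v',e')$ with $v\neq v'$ and $e\neq e'$ form an acyclic matching: a closed path would need at least two edges and two vertices cycling, which two disjoint pairs cannot do, exactly as in the graph case. The pairs live in the $1$-skeleton, so the count $\binom{2f_1}{2}-f_1-\sum_v\binom{\deg v}{2}$ is valid verbatim for the coefficient of $z_0^{f_0-2}z_1^{f_1-2}\prod_{i\ge2}z_i^{f_i}$ in $\Phi$. With $f_1=8$ in both graphs and degree-square sums $15\neq18$, the coefficients are $120-8-15=97$ versus $94$. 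These differ provided the monomial is the same, i.e.\ $f_2,f_3$ agree; if they do not, $\Phi$ differs anyway.

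The main obstacle is bookkeeping: correctly listing independent sets, confirming that the complement degrees are as claimed, and handling the case split on whether the higher $f_i$ coincide. Both branches yield $\Phi(G_1)\neq\Phi(G_2)$, so the argument closes either way.
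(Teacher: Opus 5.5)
\textbf{There is a genuine gap: the two-pair route you commit to fails, and the computation that actually proves the theorem is the one you left unfinished.}

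Your complement degree sequences $(1,3,3,3,3,3)$ and $(2,2,2,3,3,4)$ are correct. But the second gives $\sum_v\binom{\deg v}{2}=1+1+1+3+3+6=15$, not $18$. So both two-pair coefficients equal $\binom{16}{2}-8-15=97$.

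This is not a repairable slip; the equality is forced.
\begin{itemize}
\item $\sum_v\deg(v)^2=\operatorname{tr}(L_G^2)-2m$ is Laplacian-spectral.
\item The complement degrees are $n-1-\deg(v)$.
\item Hence $\sum_v\binom{\bar d_v}{2}$ agrees on every Laplacian-cospectral pair.
\end{itemize}
More conceptually, the argument of Theorem~\ref{thm:phi-determines-me} shows that the whole slice $[z_2^{f_2}\cdots z_d^{f_d}]\Phi(G)$ equals $\ME_{\overline{G}}$, which depends only on the spectrum of $G$. So once the $f$-vectors agree, no coefficient in which all simplices of dimension $\ge 2$ are critical can ever separate a cospectral pair. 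The second branch of your case split is therefore false. Your claim that the argument closes either way is unjustified: as written, the proof rests on a branch whose hypothesis you never checked.

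\textbf{The fix is the branch you abandoned.}

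For $G_1$, vertex $0$ has the single non-neighbour $4$, so no independent triple contains $0$. Among the ten triples in $\{1,\dots,5\}$, all but $\{1,2,5\}$ and $\{1,3,5\}$ contain one of the edges $14,23,45$.

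$G_2$ is bipartite with parts $\{0,4,5\}$ and $\{1,2,3\}$. Its independent triples are exactly $\{0,4,5\}$, $\{1,2,3\}$ and $\{3,4,5\}$.

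Neither graph has an independent $4$-set. Hence
\[
  f(\Ind(G_1))=(6,8,2)\neq(6,8,3)=f(\Ind(G_2)),
\]
that is, $I(G_1;t)=1+6t+8t^2+2t^3\neq 1+6t+8t^2+3t^3=I(G_2;t)$. Your observation that the empty matching gives the unique maximal-degree monomial then finishes the proof: $z_0^6z_1^8z_2^2\neq z_0^6z_1^8z_2^3$.

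With this computation in place and the two-pair detour removed, you have a complete, computer-free proof. It is more elementary than the paper's, which uses machine enumeration ($|\cA(\Ind(G_1))|=6212\neq15464=|\cA(\Ind(G_2))|$) plus a homology and collapsibility comparison. The paper's route yields extra information, namely different homotopy types and $[z_0]\Phi(G_1)=0$. Yours shows the separation is already visible in $I(G;t)$, and the Euler characteristics $0\neq1$ give the homotopy distinction for free.
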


\begin{proof}
Take the Laplacian-cospectral pair from
Example~\ref{ex:cospectral}: graphs $G_1,G_2$ on $6$ vertices
and $7$ edges with edge sets
$E(G_1)=\{01,02,03,05,14,23,45\}$ and
$E(G_2)=\{01,02,03,14,15,24,25\}$, both having Laplacian
eigenvalues $\{0,3-\sqrt 5, 2, 3, 3, 3+\sqrt 5\}$.
By the converse direction of
Theorem~\ref{cor:cospectral}, $\ME_{G_1}=\ME_{G_2}$.

Direct enumeration of acyclic matchings on the independence
complexes gives
\[
  |\cA(\Ind(G_1))| = 6212 \;\neq\; 15464 = |\cA(\Ind(G_2))|,
\]
so in particular $\Phi(G_1)\neq\Phi(G_2)$ as polynomials.
Furthermore, direct homology computation gives
\[
  \widetilde H_1(\Ind(G_1);\mathbb Z)\cong \mathbb Z,
\]
whereas $\Ind(G_2)$ is collapsible
($[z_0]\Phi(G_2)=144>0$), and hence contractible. Thus the two independence
complexes have different homotopy types. The equality $[z_0]\Phi(G_1)=0$
records the stronger collapse-level fact that $\Ind(G_1)$ is not collapsible.
\end{proof}

We next prove the converse structural statement: although $\ME_G$ does
not determine $\Phi(G)$, the independence $\ME$ polynomial $\Phi(G)$ does
determine the graph-level Morse ensemble polynomial $\ME_G$.

\begin{theorem}[$\Phi$ determines the graph ME polynomial]
\label{thm:phi-determines-me}
For every finite graph $G$, the polynomial $\Phi(G)$ determines
$\ME_G$. More precisely, $\Phi(G)$ determines the graph ME polynomial of the
complement graph $\overline{G}$, and hence determines the Laplacian spectrum
of $G$ and the polynomial $\ME_G$.
\end{theorem}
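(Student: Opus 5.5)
The plan is to read off the $1$-skeleton of $\Ind(G)$ from $\Phi(G)$, observe that this $1$-skeleton is the complement graph $\overline{G}$, and then convert the Laplacian spectrum of $\overline{G}$ into that of $G$. The key point is that the \emph{low-dimensional} part of the critical-vector distribution of $\Ind(G)$ is exactly the Morse ensemble of its $1$-skeleton. Write $K=\Ind(G)$. Its vertices are $V$, its edges are the non-edges of $G$, so $K^{(1)}=\overline{G}$ as a graph, and $n=f_0(K)$ can be read from $\Phi(G)$ as the $z_0$-degree of the monomial of the empty matching. Likewise, all $f_i(K)$ are the exponents of the monomial $\prod_i z_i^{f_i(K)}$, which has coefficient $1$ and is the unique monomial of maximal total degree.

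\textbf{Step 1: isolate matchings supported in dimensions $0,1$.} Consider the matchings $M$ on $P(K)$ in which every simplex of dimension $\ge 2$ is critical. In such a matching, edges can be matched only with vertices, since an edge matched to a triangle would make that triangle non-critical. Such an $M$ is therefore exactly an acyclic matching on $P(K^{(1)})=P(\overline G)$. Acyclicity is inherited: all higher simplices are unmatched, so no closed gradient path can pass through them, because a gradient path entering a simplex of dimension $\ge 2$ must use a matched pair there. These matchings are precisely the monomials of $\Phi(G)$ whose exponent of $z_i$ equals $f_i(K)$ for all $i\ge 2$. So we define
\[
  \Psi(z_0,z_1):=\Bigl[\textstyle\prod_{i\ge2} z_i^{f_i(K)}\Bigr]\,\Phi(G),
\]
the coefficient being taken as a polynomial in $z_0,z_1$. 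Then $\Psi=\ME_{\overline G}(z_0,z_1)$. This step is routine, but I will check carefully that a matched $(1,2)$-pair necessarily lowers the $z_2$-exponent, which it does because it makes a $2$-simplex non-critical. This gives the second assertion of the theorem.

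\textbf{Step 2: from $\ME_{\overline G}$ to the spectrum of $G$.} The main obstacle is that $\overline G$ (and $G$) need not be connected, whereas Theorem~\ref{thm:laplacian} is stated for connected graphs. To handle this, I will first extend the formula $\ME_H=z_1^{m-n}\det(z_0z_1I+L_H)$ to arbitrary $H$. This follows from Proposition~\ref{prop:product}, because both sides are multiplicative over components: the determinant factors block-diagonally and the exponents $m-n$ add. Alternatively, the extension follows directly from the Forest Expansion together with the Matrix-Forest Theorem, which never used connectivity beyond $n-|F|=c(F)$. Then $\Psi$ determines $m(\overline G)$, and dividing by $z_1^{m-n}$ and substituting $q=z_0z_1$ determines the characteristic polynomial $\det(qI+L_{\overline G})$, hence the spectrum $0=\mu_1\le\dots\le\mu_n$ of $L_{\overline G}$.

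\textbf{Step 3: complement spectrum.} I will use the classical identity $L_G+L_{\overline G}=nI-J$. Since $J$ commutes with both Laplacians and the all-ones vector lies in the kernel of each, the two Laplacians are simultaneously diagonalisable on $\mathbf 1^\perp$. It follows that the spectrum of $L_G$ is $\{0\}\cup\{n-\mu_i: 2\le i\le n\}$. Finally, $m(G)=\binom n2-m(\overline G)$, so by the extended Laplacian formula $\ME_G=z_1^{m-n}\prod_i(z_0z_1+\lambda_i)$ is determined by $\Phi(G)$.
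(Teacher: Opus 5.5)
Your proposal is correct and follows essentially the same route as the paper. You extract the coefficient of $\prod_{i\ge 2} z_i^{f_i(K)}$ to obtain $\ME_{\overline G}$, recover the Laplacian spectrum via the Laplacian formula extended multiplicatively to disconnected graphs, and invert the complement relation $\mu\mapsto n-\mu$; your derivation of that relation from $L_G+L_{\overline G}=nI-J$ and your explicit handling of disconnected graphs just fill in details the paper cites or mentions in passing.
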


\begin{proof}
Let $K=\Ind(G)$ and let $H=K^{(1)}$ be its $1$-skeleton. Then
$H=\overline{G}$, the complement graph of $G$: the edges of $H$ are exactly
the independent two-element subsets of $G$, equivalently the non-edges of
$G$. Write $f_i=f_i(K)$ for the number of $i$-simplices of $K$, and let
$d=\dim K$. The vector $(f_0,\ldots,f_d)$ is visible
from $\Phi(G)$ as the exponent vector of the unique monomial of total degree
$|K|$, hence the monomial coming from the empty matching.

Consider the part of $\Phi(G)=\ME_K$ in which every simplex of
dimension at least $2$ is critical, namely the coefficient of
$z_2^{f_2}\cdots z_d^{f_d}$. A matching contributing to this
coefficient cannot contain any matched pair incident to a simplex of
dimension at least $2$. Therefore it is exactly an acyclic matching
on the vertex-edge part of the face poset, i.e. on the face poset of
$H=\overline{G}$. Conversely, any acyclic matching on $P(H)$ extends to
an acyclic matching on $P(K)$ by leaving all higher-dimensional
simplices unmatched, since all cover relations incident to higher
simplices remain oriented upward and cannot create a directed cycle.
Thus
\[
  \bigl[z_2^{f_2}\cdots z_d^{f_d}\bigr]\Phi(G)
  = \ME_{\overline{G}}(z_0,z_1).
\]

The graph Morse ensemble polynomial determines the Laplacian spectrum
of the graph by the Laplacian Formula (with the usual multiplicative
extension to disconnected graphs). Finally, the Laplacian spectra of
$G$ and its complement determine each other: if
$0=\lambda_1,\lambda_2,\ldots,\lambda_n$ are the Laplacian eigenvalues
of $G$, then the Laplacian eigenvalues of $\overline{G}$ are
\[
  0,\; n-\lambda_2,\; \ldots,\; n-\lambda_n.
\]
Consequently $\ME_{\overline{G}}$ determines the Laplacian spectrum of
$G$, and hence determines $\ME_G$.
\end{proof}

Combining Theorem~\ref{thm:phi-cospectral} with
Theorem~\ref{thm:phi-determines-me}, we obtain the strict hierarchy
\[
  \Phi(G) \quad\Longrightarrow\quad \ME_G,
  \qquad
  \ME_G \not\Longrightarrow \Phi(G).
\]
Thus the independence ME polynomial is a strict refinement of the
one-dimensional graph Morse ensemble invariant, rather than an
independent invariant in the opposite direction.

\begin{remark}[Recoverable invariants from $\Phi(G)$]
\label{rem:phi-recoverable}
The polynomial $\Phi(G)$ contains several standard graph-theoretic
quantities.  The empty matching in $\ME_{\Ind(G)}$ gives the monomial
\[
  \prod_i z_i^{f_i(\Ind(G))},
\]
so $\Phi(G)$ determines the full $f$-vector of $\Ind(G)$, and hence the
independence polynomial
\[
  I(G;t)=1+\sum_{i\geq 0} f_i(\Ind(G))t^{i+1}.
\]
In particular, it determines the independence number
\(\alpha(G)=\dim\Ind(G)+1\).

Moreover, by Theorem~\ref{thm:phi-determines-me}, $\Phi(G)$ determines
the graph-level Morse ensemble polynomial $\ME_G$, and hence the
Laplacian spectrum of $G$.  Thus $\Phi(G)$ refines both the
independence-polynomial data of $\Ind(G)$ and the Laplacian-spectral
data encoded by $\ME_G$.
\end{remark}

\begin{remark}[Matroid independence complexes]
Matroid independence complexes provide a natural class of examples for
the perfect-coefficient viewpoint.  If $M$ is a loopless matroid of rank
$r$, then its independence complex $\Ind(M)$ is pure shellable
\cite{Bjorner1992} and has the homotopy type of a wedge of
$(r-1)$-spheres.  Thus matroid independence complexes give natural test
cases for the coefficient
\[
  [z_0z_{r-1}^{\,\beta_{r-1}(\Ind(M))}]\ME_{\Ind(M)}.
\]

For example, $\Ind(U_{2,4})$ is the one-dimensional complex $K_4$.
Hence, by the Laplacian Formula,
\[
  \ME_{\Ind(U_{2,4})}
  =
  64z_0z_1^3
  +48z_0^2z_1^4
  +12z_0^3z_1^5
  +z_0^4z_1^6.
\]
Here $\beta_1(K_4)=3$, and
\[
  [z_0z_1^3]\ME_{K_4}=64
\]
counts the perfect matchings.

Similarly, $\Ind(U_{3,4})=\partial\Delta^3\simeq S^2$, and direct
enumeration gives
\[
  [z_0z_2]\ME_{\partial\Delta^3}=256.
\]
\end{remark}

\section{Open problems}
\label{sec:rigidity}

\textbf{Open problems.}
\begin{enumerate}
\item \emph{Closed forms for the non-liftable correction.}
\label{prob:nonliftable-matchings}
The correction $F(K,\sigma,\tau)$ is the main obstruction to a closed
deletion--contraction formula.  Its vanishing and leading layer are governed by
incidence geometry (Lemma~\ref{thm:incidence-separation} and
Proposition~\ref{prop:leading-obstruction}), and the obstruction-path viewpoint
expresses it as a finite inclusion--exclusion over forced-matching conditions.
Can this be converted into effective closed forms beyond the correction-free
incidence-forest regime, for instance for incidence graphs with one controlled
cycle, bounded treewidth, manifold-like pseudomanifolds, or the one-dimensional
non-bridge case?

\item \emph{Perfect Morse counts in higher dimensions.}
\label{prob:perfect-counts}
For graphs, $[z_0 z_1^{\beta_1}]\ME_G = n\tau(G)$ via the Matrix-Tree theorem
(Theorem~\ref{thm:perfect}). For the two-dimensional sphere $\partial\Delta^3$, the example
$[z_0z_2]\ME_{\partial\Delta^3}=256=4\cdot4\cdot\tau(K_4)$ suggests that
higher-dimensional perfect coefficients may be related to cellular spanning
tree enumerators. Duval--Klivans--Martin's simplicial Matrix-Tree
Theorem~\cite{DKM2009} provides the natural candidate tool, but the correct
formula likely involves compatible families of weighted cellular spanning trees
rather than a single top-dimensional tree enumerator.

\item \emph{Complexity of Morse ensemble computation.}
Any exact algorithm producing \(\ME_K\) in a representation from which the
minimum total degree can be read would solve the optimal discrete Morse
matching problem.  In this explicit-output sense, exact computation of
\(\ME_K\) is at least as difficult as optimal Morse matching.

A sharper formulation is open for coefficient extraction, for specializations
such as \(\ME_K(1,\ldots,1)\), and for the perfect coefficient
\(p_\Bbbk(K)\).  The Top-Face Recursion suggests that the difficulty is
concentrated in the non-liftable correction terms \(F(K,\sigma,\tau)\).
Can this obstruction be bounded or efficiently described for useful classes
of complexes?

Finally, does the same difficulty already occur on flag complexes?  Equivalently,
what is the complexity of computing
\[
  \Phi(G)=\ME_{\Ind(G)}
\]
from a graph \(G\)?

\item \emph{Recovery from $\Phi(G)$.}
\label{prob:phi-recover}
Theorem~\ref{thm:phi-determines-me} shows that $\Phi(G)$ determines $\ME_G$,
and hence the Laplacian spectrum of $G$. Which further graph parameters are
functions of $\Phi(G)$? For instance, is the chromatic polynomial $\chi(G;t)$
always recoverable from $\Phi(G)$? For which restricted graph classes, such as
forests, bipartite graphs, or planar graphs, does $\Phi(G)$ separate
non-isomorphic graphs?
\end{enumerate}

\end{document}